\numberwithin{equation}{section}
\newtheorem{theorem}{Theorem}[section]
\newtheorem*{theorem*}{Theorem}
\newtheorem{corollary}[theorem]{Corollary}
\newtheorem{lemma}[theorem]{Lemma}
\newtheorem*{lemma*}{Lemma}
\newtheorem{prop}[theorem]{Proposition}
\theoremstyle{definition}
\newtheorem*{remark}{Remark}
\newtheorem{definition}[theorem]{Definition}
\newtheorem{example}[theorem]{Example}
\begin{document}

\title{Analogues of the Robin-Lagarias Criteria for the Riemann Hypothesis}
\author[1]{Lawrence C. Washington}
\author[2]{Ambrose Yang}
\affil[1]{Department of Mathematics, University of Maryland}
\affil[2]{Montgomery Blair High School}
\maketitle
\begin{abstract}
Robin's criterion states that the Riemann hypothesis is equivalent to $\sigma(n) < e^\gamma n \log\log n$ for all integers $n \geq 5041$, where $\sigma(n)$ is the sum of divisors of $n$ and $\gamma$ is the Euler-Mascheroni constant.
We prove that  the Riemann hypothesis is equivalent to the statement that $\sigma(n) < \frac{e^\gamma}{2} n \log\log n$ for all odd numbers $n \geq 3^4 \cdot 5^3 \cdot 7^2 \cdot 11 \cdots 67$. Lagarias's criterion for the Riemann hypothesis states that the Riemann hypothesis is equivalent to $\sigma(n) < H_n + \exp{H_n}\log{H_n}$ for all integers $n \geq 1$, where $H_n$ is the $n$th harmonic number. We establish an analogue to Lagarias's criterion for the Riemann hypothesis by creating a new harmonic series $H^\prime_n = 2H_n - H_{2n}$ and demonstrating that the Riemann hypothesis is equivalent to $\sigma(n) \leq \frac{3n}{\log{n}} + \exp{H^\prime_n}\log{H^\prime_n}$ for all odd $n \geq 3$. We prove stronger analogues to Robin's inequality for odd squarefree numbers. Furthermore, we find a general formula that studies the effect of the prime factorization of $n$ and its behavior in Robin's inequality.
\end{abstract}

\section{Introduction}
The Riemann hypothesis, an unproven conjecture formulated by Bernhard Riemann in 1859, states that all non-real zeroes of the Riemann zeta function $$\zeta(s) = \sum_{n=1}^{\infty} \frac{1}{n^s}$$ have real part $\frac{1}{2}$. In 1984, Robin \cite{13} (see also [4])  showed that the Riemann hypothesis is equivalent to the statement that 
\begin{equation}\label{Robineq}
f(n) = \frac{\sigma(n)}{n\log{\log{n}}} < e^\gamma
\end{equation}
for all integers $n > 5040$, where $\sigma(n)$ is the sum of divisors function and $\gamma$ is the Euler-Mascheroni constant. In this paper $\log$ is assumed to be the natural logarithm.

In \cite{14}, Robin considered primes in arithmetic progressions. He reformulated his original inequality in terms of 
$$\sigma_{k, l}(n) = \prod_{p \equiv l(k)}\sigma(p^e)$$ 
where $n = \prod_{p | n} p^e$. He also defined an analogue of his $f(n)$ function: 
$$f_{k,l}(n) = \frac{\sigma_{k, l}(n)}{n(\log{(\phi(k)\log{n}}))^{\frac{1}{\phi(k)}}}$$ where $\phi(k)$ is Euler's totient function. Note that $f_{2,1}(n) = f(n)$ for odd $n$. 
Let $\lim \sup f_{k, l}(n) = \alpha_{k, l}$.
The main theorem in \cite{14}, specialized to arithmetic sequences mod 4,  states the following:

\begin{theorem} {\rm \cite{14}}
(a) There are infinitely many $n$ such that $f_{4,1}(n) > \alpha_{4,1}$.\newline
(b) If the Generalized Riemann hypothesis is true for the zeta function of $\mathbb Q(i)$, then $f_{4,3}(n) < \alpha_{4,3}$ when $n$ is sufficiently large. \newline
\end{theorem}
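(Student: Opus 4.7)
The plan is to reduce both parts to the analysis of partial Mertens products restricted to arithmetic progressions mod $4$, and to convert statements about $\alpha_{4,l}$ into statements about the error term in such products, which by the explicit formula is controlled by the non-trivial zeros of $L(s,\chi_4)$. One first constructs extremal colossally abundant--type integers $n_x$ supported on the relevant residue class, chosen so that $\sigma_{4,l}(n_x)/n_x$ nearly realizes its upper envelope. Character orthogonality mod $4$ factors the relevant Mertens product as a combination of the classical product over all primes and the $\chi_4$-twisted Euler product, giving
\[
\prod_{p\le x,\ p\equiv l\,(4)}\bigl(1-\tfrac{1}{p}\bigr)^{-1}
= A_{4,l}\,(\log x)^{1/2}\,(1+o(1)),
\]
where $A_{4,l}$ is built out of Mertens' constant and $L(1,\chi_4)=\pi/4$ and, after the normalization defining $f_{4,l}$, determines $\alpha_{4,l}$.

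The next step is to make the $o(1)$ term explicit by splitting it into a secular part (independent of fine arithmetic information) and an oscillating part driven by zeros of $L(s,\chi_4)$. Because the twist enters with $\chi_4(1)=+1$ for $l=1$ and $\chi_4(3)=-1$ for $l=3$, the secular correction pushes $\log\bigl(f_{4,1}(n_x)/\alpha_{4,1}\bigr)$ upward but $\log\bigl(f_{4,3}(n_x)/\alpha_{4,3}\bigr)$ downward. This sign asymmetry is precisely what distinguishes the conclusions of parts (a) and (b).

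For part (a), the positive secular correction, together with an unconditional Landau-type sign-change result for
\[
E(x):=\sum_{p\le x}\tfrac{\chi_4(p)}{p}-\log L(1,\chi_4),
\]
guarantees that $f_{4,1}(n_x)>\alpha_{4,1}$ along the extremal subsequence for infinitely many $x$. For part (b), the negative secular correction gives $f_{4,3}(n_x)<\alpha_{4,3}$ along the extremal subsequence, but the statement for \emph{all} large $n$ requires bounding the oscillating contribution. GRH for $\zeta_{\mathbb Q(i)}(s)=\zeta(s)L(s,\chi_4)$ yields $E(x)=O(x^{-1/2+\varepsilon})$, which is too small to overturn the secular term; combining this with the fact that non-extremal $n$ give strictly smaller values of $f_{4,3}$ then yields the claim.

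The hard part of this approach will be clean sign tracking in the orthogonality decomposition: verifying that the secular correction really has the required positive sign for $l=1$ and negative sign for $l=3$ hinges on the specific inequality $L(1,\chi_4)=\pi/4<1$, and a small slip anywhere in the expansion would spoil the conclusion. A secondary technical hurdle is the transfer from the Mertens product at cutoff $x$ to the integer $n_x$, which requires choosing the exponents in the champions carefully so that $\log n_x$ lies in a controlled range relative to $x$, so that the normalization $(\log(2\log n))^{1/2}$ appearing in $f_{4,l}$ matches $(\log x)^{1/2}$ with negligible error.
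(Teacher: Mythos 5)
First, a point of comparison: the paper does not prove this theorem at all --- it is quoted from Robin's 1987 paper \cite{14} and used only as motivation --- so there is no internal proof to measure your sketch against; it has to be judged against Robin's actual argument. Your architecture (extremal integers supported on one residue class, Mertens products split by character orthogonality, the explicit formula for the twisted Chebyshev function, a GRH dichotomy) is the right one, but you misidentify the mechanism at exactly the place you flag as critical. The sign asymmetry between $l=1$ and $l=3$ does not come from $L(1,\chi_4)=\pi/4<1$: that value enters only the constants $\alpha_{4,l}$, and since the theorem compares $f_{4,l}$ to its \emph{own} limit superior, those constants cancel and only the rate of approach matters. The true source is the prime squares: every odd $p$ satisfies $p^2\equiv 1\pmod 4$, so
$\psi(x,\chi_4)=\theta(x;4,1)-\theta(x;4,3)+\theta(\sqrt x)+O(x^{1/3})$,
and since $\psi(x,\chi_4)$ oscillates about $0$, the class $1\pmod 4$ carries a deterministic deficit of order $\sqrt x$ (Chebyshev's bias). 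Pushed through partial summation into $\sum_{p\le x,\,p\equiv l(4)}1/p$ and into $\log n_x$, this is what flips the sign of the secular coefficient of $1/(\sqrt x\log x)$ in $\log\bigl(f_{4,l}(n_x)/\alpha_{4,l}\bigr)$. Without this input your ``secular part'' has no reason to have opposite signs in the two classes, since the two progressions have equal density and identical exponent structure $x_2\approx\sqrt{2x_1}$.

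Second, the final comparison cannot be settled at the level of exponents. The secular term has size $c_l/(\sqrt x\log x)$, and under GRH the zero-sum contribution is \emph{also} a constant times $1/(\sqrt x\log x)$; writing the latter as $O(x^{-1/2+\varepsilon})$ and declaring it ``too small to overturn the secular term'' is backwards, since $x^{-1/2+\varepsilon}$ dominates $x^{-1/2}(\log x)^{-1}$. What is actually needed --- and what Robin does, and what the present paper replicates for $f_{2,1}$ in Proposition \ref{robinProp} via the explicit constants $-0.7702$ and $7.1476$ --- is a numerical comparison of the secular coefficient against a quantity like $\sum_\rho|\rho(1-\rho)|^{-1}$ over the zeros of $\zeta(s)L(s,\chi_4)$. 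Finally, part (a) is unconditional and therefore must be run as a dichotomy: if GRH for $\mathbb Q(i)$ holds, the positive secular term wins outright along the extremal sequence; if it fails, Landau's oscillation theorem gives $\Omega_\pm(x^{\Theta-\varepsilon})$ with $\Theta>\tfrac12$, which swamps everything. Your appeal to ``an unconditional Landau-type sign-change result'' combined with the secular term conflates these two cases; as written that step fails, because when all zeros lie on the critical line no oscillation of the size you need is available, and when they do not, the secular term is irrelevant.
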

\noindent Robin also gave the following formulas:
$$\alpha_{4,1} = \left(\frac{\pi e^\gamma}{8} \prod_{p \equiv 3(4)} \left(1-\frac{1}{p^2}\right)\right)^{\frac{1}{2}} \approx 0.7738\cdots$$
$$\alpha_{4,3} = \frac{e^\gamma}{2\alpha_{4,1}}\approx 1.1508\cdots.$$
(These numerical values differ from those given by Robin, which seem to be slightly inaccurate. We thank Pieter Moree for pointing out that these constants can be evaluated to many decimal places and that slight variants of them occurred in Ramanujan's letters to Hardy. See
\cite[Section 9]{8} and [11].)
From Robin's work, we ask how $f(n)$ behaves with primes 1 mod 4 and 3 mod 4 working in concert. Under the Generalized Riemann hypothesis,  $f_{4,3}(n)$ and $f(n)$ 
eventually stay below their limits superior, while $f_{4,1}(n)$ exceeds its limit superior infinitely often. It is natural to ask how $f_{2, 1}(n)$ behaves: does it pass over its limit superior infinitely often, or does it stay under the limit superior after a cutoff point.  By \cite{6}, we have that $\alpha_{2,1} = \frac{e^\gamma}{2}$. In this paper we prove that eventually $f_{2, 1}(n)$ does stay under its limit superior, assuming the Riemann hypothesis. However, the conflicting forces of 1 mod 4 and 3 mod 4 primes yield a very large cutoff point. 
The main theorem of the present paper is the following analogue of Robin's result:
\begin{theorem}\label{MainThm1}
The Riemann hypothesis is equivalent to the statement that 
\begin{equation}\label{RobinOdd}
\frac{\sigma(n)}{n\log{\log{n}}} < \frac{e^\gamma}{2}
\end{equation}
for all odd $n \ge c_0 = 18565284664427130919514350125 = 3^4 \cdot 5^3 \cdot 7^2 \cdot 11\cdot 13 \cdots 67$.
\end{theorem}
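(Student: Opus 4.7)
The plan is to mirror Robin's proof of \eqref{Robineq}, restricted to the odd integers, in three stages: a reduction to extremal numbers, an effective asymptotic bound under RH, and a finite computational check terminating at $c_0$.

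First I would introduce the odd analogue of colossally abundant numbers, i.e.\ those odd $n$ which, for some $\epsilon>0$, maximize $\sigma(m)/m^{1+\epsilon}$ over odd $m$. The standard convexity argument (Alaoglu--Erd\H{o}s, as adapted by Robin) forces such $n$ to have the canonical factorization $3^{a_3}5^{a_5}\cdots p_k^{a_{p_k}}$ with $a_{p_i}\ge a_{p_j}$ whenever $p_i<p_j$; the exponent profile of $c_0=3^4\cdot 5^3\cdot 7^2\cdot 11\cdots 67$ is exactly of this shape. A maximality argument, essentially identical to Robin's, then reduces Theorem \ref{MainThm1} to verifying the inequality only on the odd CA numbers.

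Next, using the odd Mertens product $\prod_{3\le p\le x}\tfrac{p}{p-1}=\tfrac{e^\gamma}{2}\log x\,(1+o(1))$ together with Schoenfeld's effective $O(\sqrt{x}\log^2 x)$ bound on $\theta(x)-x$ under RH, I would establish an explicit inequality of the form $\sigma(n)/n<\tfrac{e^\gamma}{2}\log\log n$ for every odd CA $n$ whose largest prime factor $p_k$ exceeds a computable threshold $p^{*}$. For such $n$, $\sigma(n)/n$ is extremely close to $\prod_{3\le p\le p_k}p/(p-1)$ and $\log n$ is extremely close to $\theta(p_k)-\theta(2)$; on RH both quantities can then be compared to within a multiplicative error of order $\sqrt{p_k}\log^2 p_k/p_k$, which is enough to force strict inequality once $p_k$ is moderately large.

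Finally I would run a finite check on all odd CA numbers with largest prime factor $\le p^{*}$, sweeping the CA parameter $\epsilon$ downward and verifying that $c_0$ is the very last odd CA counterexample, after which every odd CA (hence every odd integer) satisfies \eqref{RobinOdd}. The hard part, I expect, will be sharpening the effective estimates enough that $p^{*}$ is small enough to keep this enumeration tractable: Robin faced the same hurdle at $5040$, but in the odd setting one must absorb the parity oscillation between primes $\equiv 1,3\pmod 4$, which is exactly what inflates $c_0$ to the twenty-nine digit value shown. For the converse, if RH fails then a zero $\rho=\beta+i\tau$ with $\beta>\tfrac12$ lets one adapt Robin's $\Omega$-argument: applying the explicit formula to $\theta(x)$ restricted to odd primes produces infinitely many odd primorial-like $n$ on which $\sigma(n)/(n\log\log n)-e^\gamma/2$ is positive, essentially verbatim to the argument in \cite{13} with the factor $\tfrac12$ now absorbed into the constant.
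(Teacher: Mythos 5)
Your forward direction (RH $\Rightarrow$ the inequality) is essentially the paper's argument: reduce to odd colossally abundant numbers via the sandwiching/maximality lemma, bound $\sigma(N)/N$ by the odd Mertens product and $\log\log N$ from below via $\theta'$ and Schoenfeld's conditional estimate on $\theta(x)-x$, and finish with a finite sweep over odd CA numbers with small largest prime factor. One correction there: $c_0$ is \emph{not} the last odd CA counterexample --- the last violator is $3^4\cdot 5^3\cdot 7^2\cdot 11\cdots 61$, and $c_0$ is the \emph{next} odd CA number; the cutoff must sit at $c_0$ precisely because the reduction lemma only guarantees the inequality for $n$ when \emph{both} neighbouring odd CA numbers satisfy it, so nothing is known for $n$ strictly between the last violator and $c_0$. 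Where you genuinely diverge is the converse. You propose to rerun Robin's $\Omega_\pm$-argument with the explicit formula restricted to odd primes; that is plausible in principle (the oscillation of $\theta'(x)=\theta(x)-\log 2$ is the same as that of $\theta$, and the Mertens constant halves), but it commits you to rebuilding a substantial piece of unconditional machinery. The paper instead gets the converse in a few lines as a corollary of Robin's original theorem: if RH fails, infinitely many even colossally abundant $M=2^kN$ satisfy $f(M)>e^\gamma$; but $\sigma(2^k)/2^k<2$ and $\log\log(2^kN)\ge\log\log N$ give $f(M)<2\cdot\sigma(N)/(N\log\log N)$, so the odd part $N$ (which tends to infinity with $M$, hence eventually exceeds $c_0$) must violate the odd inequality. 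This reduction buys you the converse with no new analytic input, at the cost of being tied to Robin's theorem as a black box; your route, if carried out, would yield a self-contained quantitative oscillation statement for the odd case (the paper does need such a statement later, in its Lagarias-type theorem, but even there it derives it from the even case rather than from the explicit formula).
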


In 2002, Lagarias, using the results of \cite{13}, showed that the Riemann hypothesis is equivalent to the statement $$\sigma(n) \leq H_n + \exp(H_n) \log(H_n)$$ for $n \geq 1$, where $H_n = \sum_{k=1}^n \frac{1}{k}$. We formulate an analogue of Lagarias's criterion for odd $n$ by first defining a new harmonic series $$H^\prime_n = 2\sum_{k=1}^n \frac{1}{k} - \sum_{k=1}^{2n} \frac{1}{k} = 2H_n-H_{2n}.$$
Our analogue to Lagarias's criterion is the following theorem.
\begin{theorem}\label{MainThm2}
The Riemann hypothesis is equivalent to the statement that
$$\sigma(n) \leq \frac{3n}{\log{n}} + \exp{H^\prime_n}\log{H^\prime_n}$$
for all odd $n \geq 3$.
\end{theorem}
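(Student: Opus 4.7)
The plan is to prove both implications by comparing the Lagarias-type upper bound $\tfrac{3n}{\log n}+\exp(H'_n)\log(H'_n)$ to the Robin-type upper bound $\tfrac{e^\gamma}{2}n\log\log n$ from Theorem~\ref{MainThm1}. First I would establish the asymptotic expansion of $H'_n$: using $H_n = \log n + \gamma + \tfrac{1}{2n} + O(n^{-2})$ together with the corresponding expansion for $H_{2n}$ gives
\[
H'_n = \log(n/2) + \gamma + \tfrac{3}{4n} + O(n^{-2}),
\]
so $\exp(H'_n) = \tfrac{e^\gamma n}{2}\bigl(1+O(1/n)\bigr)$ and $\log(H'_n) = \log\log n + \tfrac{\gamma-\log 2}{\log n} + O(\log^{-2} n)$. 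Multiplying and then adding $\tfrac{3n}{\log n}$ yields
\[
\tfrac{3n}{\log n} + \exp(H'_n)\log(H'_n) = \tfrac{e^\gamma}{2}\,n\log\log n + \left(3-\tfrac{e^\gamma(\log 2-\gamma)}{2}\right)\tfrac{n}{\log n} + O(n/\log^2 n),
\]
and since $3-\tfrac{e^\gamma(\log 2-\gamma)}{2}\approx 2.897>0$, for all sufficiently large $n$ the Lagarias-type bound strictly exceeds the Robin-type bound by a positive amount of order $n/\log n$.

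For the forward direction, assume the Riemann hypothesis. For odd $n\ge c_0$, Theorem~\ref{MainThm1} gives $\sigma(n)<\tfrac{e^\gamma}{2}n\log\log n$, which by the display above is at most the Lagarias-type bound, so the required inequality holds. For odd $n$ with $3\le n<c_0$ the inequality must be established unconditionally. For small $n$ the term $\tfrac{3n}{\log n}$ is already comparable to $\sigma(n)$, and together with $\exp(H'_n)\log(H'_n)$ it comfortably bounds $\sigma(n)$; in the larger sub-range one uses an explicit Mertens-type estimate $\sigma(n)/n \le \prod_{3\le p\le y}\tfrac{p}{p-1} \ll \log y$ (where $y$ is the largest prime factor of $n$), combined with the structural fact that the odd $n<c_0$ closest to saturating the Robin-type bound must be "colossally abundant"-type integers supported on odd primes, a short explicit list that can be checked directly.

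For the reverse direction I argue contrapositively. If the Riemann hypothesis fails, Theorem~\ref{MainThm1} produces infinitely many odd $n\ge c_0$ with $\sigma(n)\ge\tfrac{e^\gamma}{2}n\log\log n$. To convert this into a violation of the Lagarias-type inequality rather than merely of the Robin-type one, I would invoke (or adapt from Robin's paper to the odd setting) the quantitative $\Omega$-estimate: if $\zeta$ admits a zero with real part $\beta\in(1/2,1)$, then there are infinitely many odd $n$ with $\sigma(n) - \tfrac{e^\gamma}{2}n\log\log n \gg n\log\log n/\log^{\beta} n$. Since $\log\log n/\log^{\beta} n$ grows faster than $1/\log n$, this excess eventually dominates the $O(n/\log n)$ margin by which the Lagarias-type bound exceeds the Robin-type bound, so the Lagarias-type inequality also fails at such $n$. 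The principal obstacle is precisely this quantitative $\Omega$-step: Theorem~\ref{MainThm1} as stated yields only qualitative failure, and extending Robin's $\Omega$-argument for colossally abundant numbers to the odd setting with enough precision to dominate $3n/\log n$ is the delicate part.
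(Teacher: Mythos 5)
Your proposal follows essentially the same route as the paper: an explicit $n/\log n$-scale comparison between $\exp(H'_n)\log(H'_n)$ and $\frac{e^\gamma}{2}n\log\log n$ (the paper's Lemmas 4.1 and 4.3, which make your asymptotic expansion effective for all $n\ge 3$), a reduction to a finite check of the odd colossally abundant numbers below $c_0$ for the forward direction (Lemma 4.4, proved by a concavity argument in $\log n$), and a quantitative $\Omega$-estimate for odd colossally abundant numbers to defeat the $O(n/\log n)$ margin in the reverse direction. The one step you flag as the principal obstacle --- transferring Robin's $f(M)=e^\gamma\bigl(1+\Omega_\pm((\log M)^{-\beta})\bigr)$ from even to odd colossally abundant numbers --- is exactly the paper's Lemma 4.5, which follows from the bound $k=O(\log\log M)$ on the exponent of $2$ in $M=2^kN$, so that $\log M=\log N+O(\log\log N)$ and $f(N)>\tfrac12 f(M)$.
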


We note that the main term in Lagarias's result is $\exp(H_n) \log(H_n)$; the addition of $H_n$ is simply to account for small $n$. Analogously, our addition of $\frac{3n}{\log n}$ is also to account for small $n$.

The theme of Robin's inequality occurs in several places in the literature. In particular,  Robin's inequality has been studied from various points of view and  for certain
families of integers. See, for example, \cite{2}, \cite{5}, \cite{12}, \cite{18}.

\subsection*{Outline}
In Section 2 we introduce the concept of odd colossally abundant numbers, which will be used to prove our analogue of Robin's inequality in Section 3. In Section 4 we prove our analogue to Lagarias's inequality. Section 5 shows that our analogue (\ref{RobinOdd}) of Robin's inequality is true for odd squarefree numbers. A study of how the prime factorization of $n$ affects the behavior of $f(n)$ in Robin's original inequality (\ref{Robineq}) is also contained in Section 5. The computations
needed in the proofs of Proposition \ref{robinProp} and Theorem \ref{lagariasThm} are contained in Section 6.

\section{Odd Colossally Abundant Numbers}
In the study of the criteria of Robin and Lagarias for the Riemann hypothesis, we are especially concerned with integers $n$ for which $\frac{\sigma(n)}{n}$ is large. In particular, we need a class of integers known as \textit{colossally abundant numbers} and a closely related family of \textit{odd colossally abundant numbers}.

\begin{definition}\label{MainDef} An integer $M\ge 2$ is \textit{colossally abundant} if there exists $\epsilon>0$ such that 
 $$\frac{\sigma(k)}{k^{1+\epsilon}} < \frac{\sigma(M)}{M^{1+\epsilon}} \quad \text{for all $k$ with } 2\le k <M$$
and
 $$\frac{\sigma(M)}{M^{1+\epsilon}} \ge \frac{\sigma(k)}{k^{1+\epsilon}} \quad \text{for all } k >M.$$
An odd integer $N\ge 3$ is \textit{odd colossally abundant} if there exists $\epsilon>0$ such that 
 $$\frac{\sigma(k)}{k^{1+\epsilon}} < \frac{\sigma(N)}{N^{1+\epsilon}} \quad \text{for all odd $k$ with } 3\le k <N$$
and
 $$\frac{\sigma(N)}{N^{1+\epsilon}} \ge \frac{\sigma(k)}{k^{1+\epsilon}} \quad \text{for all odd } k >N.$$
({\it Note:} An odd colossally abundant number is not the same as a colossally abundant odd number since all colossally abundant numbers are even. Throughout this paper, for the convenience of the reader, we use $M$ to denote colossally abundant numbers and $N$ to denote odd colossally abundant numbers.)
\end{definition}

Essentially, a colossally abundant number $M$ is an integer for which the function $\frac{\sigma(k)}{k^{1+\epsilon}}$ attains its maximum
and similarly for odd colossally abundant. Some authors use the non-strict inequality
$\frac{\sigma(k)}{k^{1+\epsilon}} \le \frac{\sigma(M)}{M^{1+\epsilon}} \quad \text{for } 1\le k <M$. This allows some values of $\epsilon$ to correspond to more than one colossally
abundant number and therefore such values of $\epsilon$ need to be removed from consideration in some situations such as Lemma \ref{AlphaLemma} below. To avoid this, we use the definition given on page 455 (not page 448, where the inequalities are switched) of \cite{1}. 
However, we use colossally abundant numbers only as a tool in the proofs of our theorems and the
distinction between the two definitions does not affect our proofs.

Colossally abundant numbers were studied by Alaoglu and Erd\H{o}s in 1944, and they derived an explicit formula for finding a colossally abundant number given a fixed $\epsilon$. 
Since we exclude $M, N=1$ from being (odd or standard) colossally abundant, we need to give upper bounds on $\epsilon$ in the following lemma in order to ensure that $M, N>1$.
In the remainder of the paper, we implicitly assume, without further mention,  that $\epsilon$ is less than these upper bounds. 
 \begin{lemma}\label{AlphaLemma} {\rm \cite{1}} Let $0<\epsilon \le -1+\log_2(3)$. If $M$ is colossally abundant corresponding to $\epsilon$, then 
$$M = \prod{p^{\alpha_p(\epsilon)}},$$
where
\begin{equation}
    \alpha_p(\epsilon) = \left\lfloor\frac{\log{(p^{1+\epsilon}-1) - \log(p^\epsilon - 1)}}{\log{p}}\right\rfloor - 1, \label{expCA}
\end{equation}
and every number given by this formula is colossally abundant. Similarly, let $0<\epsilon\le -1+\log_3(4)$. If $N$ is odd colossally abundant  corresponding to $\epsilon$, then
$$N = \prod_{p \text{ prime }\ge 3} {p^{\alpha_p(\epsilon)}},$$
and every $N\ge 3$ of this form is odd colossally abundant.
\end{lemma}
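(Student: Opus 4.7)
The plan is to exploit the multiplicativity of $F_\epsilon(k) := \sigma(k)/k^{1+\epsilon}$ and reduce the global maximization problem to independent local problems at each prime. Since $\sigma$ is multiplicative and $k \mapsto k^{1+\epsilon}$ is completely multiplicative, $F_\epsilon$ is multiplicative in $k$, so
$$F_\epsilon(k) = \prod_p F_\epsilon(p^{a_p}) \qquad \text{where } k = \prod_p p^{a_p},$$
and maximizing $F_\epsilon(k)$ over $k \geq 1$ reduces to maximizing $F_\epsilon(p^a)$ in $a \geq 0$ separately for each $p$. In the odd case, the same reduction applies with $p$ restricted to primes $\geq 3$.

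Next, I would analyze the one-step ratio
$$\frac{F_\epsilon(p^a)}{F_\epsilon(p^{a-1})} = \frac{p^{a+1}-1}{p^a-1}\,p^{-(1+\epsilon)}.$$
Writing $(p^{a+1}-1)/(p^a-1) = p + (p-1)/(p^a-1)$ shows this ratio is strictly decreasing in $a$. Rearranging the inequality ``ratio $\geq 1$'' gives $a \leq \bigl(\log(p^{1+\epsilon}-1) - \log(p^\epsilon-1)\bigr)/\log p - 1$; taking the floor of the right-hand side produces exactly the formula (\ref{expCA}) for the largest local maximizer $\alpha_p(\epsilon)$ of $F_\epsilon(p^a)$. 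A short calculation shows that the hypotheses $\epsilon \leq -1+\log_2 3$ and $\epsilon \leq -1+\log_3 4$ are precisely what is needed to force $\alpha_2(\epsilon) \geq 1$ and $\alpha_3(\epsilon) \geq 1$, respectively, which in turn guarantee $M \geq 2$ and $N \geq 3$.

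Finally, I would assemble both directions of the equivalence. For the converse: given $M = \prod_p p^{\alpha_p(\epsilon)}$, each local factor is at its maximum, so $F_\epsilon(k) \leq F_\epsilon(M)$ for all $k$; strict inequality when $k < M$ follows because equality forces each exponent $b_p$ of $k$ to be a local maximizer at $p$, and by the strict monotonicity of the one-step ratio the only alternative to $\alpha_p(\epsilon)$ is $\alpha_p(\epsilon)+1$ (which can occur only at the boundary $\epsilon$ where the argument of the floor is an integer), forcing $b_p \geq \alpha_p(\epsilon)$ throughout and hence $k \geq M$. For the forward direction: if $M = \prod p^{b_p}$ is colossally abundant for $\epsilon$, then each $b_p$ must be a local maximizer; the strict inequality ``$F_\epsilon(k) < F_\epsilon(M)$ for $k < M$'' in Definition \ref{MainDef} then rules out $b_p = \alpha_p(\epsilon)+1$ at any tied prime, since otherwise the smaller number obtained by replacing that exponent with $\alpha_p(\epsilon)$ would be a strict competitor of equal $F_\epsilon$-value, and so $b_p = \alpha_p(\epsilon)$ for every $p$.

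The main technical obstacle is the bookkeeping at those exceptional values of $\epsilon$ where $(\log(p^{1+\epsilon}-1) - \log(p^\epsilon-1))/\log p$ is an integer and two adjacent exponents at some prime $p$ yield the same local maximum. This is precisely why the strict inequality appears in Definition \ref{MainDef}, and one has to verify that the strict convention uniquely selects the integer $M$ (respectively $N$) given by the floor formula. Everything else is essentially the observation that an optimization of a multiplicative function over positive integers decouples into independent optimizations at each prime.
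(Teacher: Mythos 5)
Your proposal follows essentially the same route as the paper: both reduce the maximization of $\sigma(k)/k^{1+\epsilon}$ to independent local maximizations at each prime via multiplicativity of that function, the only real difference being that the paper simply cites Alaoglu--Erd\H{o}s for the optimal-exponent formula that you derive from the one-step ratio $\frac{p^{a+1}-1}{p^a-1}p^{-(1+\epsilon)}$. One small correction to your tie-breaking discussion: when the argument of the floor is an integer, the formula $\alpha_p(\epsilon)=\lfloor\,\cdot\,\rfloor-1$ returns the \emph{larger} of the two tied exponents (one has $g(p^{\alpha_p})=g(p^{\alpha_p-1})$), so the competing exponent is $\alpha_p(\epsilon)-1$ rather than $\alpha_p(\epsilon)+1$; this is exactly the boundary phenomenon the paper flags after Definition \ref{MainDef} and declares irrelevant to its applications, so nothing downstream is affected.
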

\begin{proof} 
Let $g(M) = \frac{\sigma(M)}{M^{1+\epsilon}}$. Since $g$ is multiplicative,
$$g(M) = \frac{\sigma(M)}{M^{1+\epsilon}} = \prod_{p^e||M}{\frac{\sigma(p^e)}{p^{e(1+\epsilon)}}} = \prod_{p^e||M}{g(p^e)}.$$
Maximizing $g$ is equivalent to maximizing each term in the product (and always taking the smaller power of a prime $p$ if $\epsilon$ yields $g(p^a)=g(p^b)$ for some $a, b$). 
It is proved in \cite{1} that the optimal exponent for each prime $p$ is given by the formula $\alpha_p(\epsilon)$ above. For odd numbers, we must maximize each term in the product as well, simply leaving out $p = 2$. 
\end{proof}

\begin{definition} {\rm \cite{13}} \label{fEpsilon}
For $x > 1$ and an integer $k \geq 1$, define 
$$F(x, k) = \frac{\log{\left(1+\frac{1}{x+x^2+\dots+x^k}\right)}}{\log{x}} = \frac{\log\left(\frac{x^{k+1}-1}{x(x^k-1)}\right)}{\log x}.$$ 
\end{definition}

\begin{definition} {\rm \cite{13}} \label{expLabel}
For $\epsilon>0$ and an integer $k\ge 1$, let $x_k$ be the value of $x$ such that $F(x, k) = \epsilon$. 
(Since $F(x,k)$ strictly decreases from ${\infty}$ to 0 for $x\in (1,\infty)$, the number $x_k$ is uniquely determined.)
\end{definition}

The following gives the significance of the numbers $x_k$. We include the proof from \cite{7} in order to show the relevance of the function $F$.
\begin{lemma} \label{xkLemma1}{\rm \cite[p. 70]{7}}
Let $M$ or $N$ be the (odd or even) colossally abundant number generated by $\epsilon$. Then $\alpha_p(\epsilon) = k$ for $x_{k+1} < p \le x_k$.
\end{lemma}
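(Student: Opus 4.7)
The plan is to reduce the claim to a direct computation with the Alaoglu--Erd\H{o}s formula from Lemma~\ref{AlphaLemma}. Setting
$$y(\epsilon) \;=\; \frac{\log(p^{1+\epsilon}-1) - \log(p^{\epsilon}-1)}{\log p},$$
Lemma~\ref{AlphaLemma} reads $\alpha_p(\epsilon) = \lfloor y(\epsilon)\rfloor - 1$. I aim to prove two facts: (i) $y$ is a strictly decreasing function of $\epsilon$, and (ii) $y(\epsilon) = k+1$ precisely when $\epsilon = F(p,k)$. Together these identify the $\epsilon$-intervals on which $\alpha_p(\epsilon) = k$ as half-open intervals with endpoints $F(p,k+1)$ and $F(p,k)$, and the monotonicity of $F(\cdot, m)$ in its first slot recorded in Definition~\ref{expLabel} then converts that into the required bracketing $x_{k+1} < p \le x_k$.

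Fact~(i) is a one-line derivative calculation: after dividing by $\log p > 0$, $y'(\epsilon)$ equals $1/(p^{1+\epsilon}-1) - 1/(p^{\epsilon}-1)$, which is negative because $p^{1+\epsilon} > p^{\epsilon}$. For fact~(ii), I substitute the closed form $p^{F(p,k)} = (p^{k+1}-1)/(p(p^k-1))$ read off from Definition~\ref{fEpsilon} into the definition of $y$. A short manipulation gives
$$p^{1+\epsilon}-1 \;=\; \frac{p^k(p-1)}{p^k-1}, \qquad p^{\epsilon}-1 \;=\; \frac{p-1}{p(p^k-1)},$$
so the ratio $(p^{1+\epsilon}-1)/(p^{\epsilon}-1)$ telescopes to $p^{k+1}$, and therefore $y(\epsilon) = k+1$.

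Combining (i) and (ii), $\alpha_p(\epsilon) = k$ is equivalent to $k+1 \le y(\epsilon) < k+2$, which by the strict monotonicity of $y$ together with the endpoint identification translates into $F(p,k+1) < \epsilon \le F(p,k)$. Since $F(\cdot, m)$ strictly decreases from $\infty$ to $0$ on $(1,\infty)$ and $\epsilon = F(x_m,m)$ by Definition~\ref{expLabel}, the inequality $\epsilon \le F(p,k)$ is the same as $p \le x_k$ and $F(p,k+1) < \epsilon$ the same as $p > x_{k+1}$, which is the bracketing we want. The odd colossally abundant case is identical: the per-prime optimization in Lemma~\ref{AlphaLemma} simply omits $p = 2$, and nothing in the above per-prime analysis changes for odd primes.

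The main obstacle is really just bookkeeping the strictness so the conclusion reads $x_{k+1} < p \le x_k$ rather than its mirror image. This is what forces step~(ii) to be sharp: the identification $y(F(p,k)) = k+1$ (an integer) pins the boundary case $\epsilon = F(p,k)$ to the \emph{closed} right endpoint $p = x_k$ via the floor in the Alaoglu--Erd\H{o}s formula, and the strict inequality at the left endpoint then falls out automatically from the integer jump of $\lfloor y\rfloor$.
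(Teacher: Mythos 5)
Your proposal is correct and follows essentially the same route as the paper: both reduce the claim to the equivalence $F(p,k+1) < \epsilon \le F(p,k) \iff p^{k+1} \le \frac{p^{1+\epsilon}-1}{p^{\epsilon}-1} < p^{k+2}$, the latter being exactly the floor condition $\alpha_p(\epsilon)=k$. The only cosmetic difference is that you evaluate the floor's argument at the threshold values $\epsilon = F(p,k)$ and invoke its monotonicity in $\epsilon$, whereas the paper rearranges the inequality $p^{1+\epsilon} \le \frac{p^{k+1}-1}{p^k-1}$ algebraically; the underlying computation is identical.
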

\begin{proof} Since $F(x,k)$ is a strictly decreasing function for each $k$, the inequality $x_{k+1}< p \le x_k$  is equivalent to
$$
F(p, k+1) < F(x_{k+1}, k+1)=\epsilon = F(x_k, k)  \le F(p, k),
$$
which is the same as
$$
\frac{p^{k+2}-1}{p^{k+1}-1} < p^{1+\epsilon} \le \frac{p^{k+1}-1}{p^k-1}.
$$
This can be rearranged to
$$
p^{k+1}\le \frac{p^{1+\epsilon}-1}{p^{\epsilon}-1}< p^{k+2},
$$
which is equivalent to $\alpha_p(\epsilon)=k$. \end{proof}

\begin{lemma}\label{xkLemma2}{\rm \cite[p. 190]{13}} (a)  $x_k > x_1^{\frac{1}{k}}$ for all $k\ge 2$.
\newline
(b) $\sqrt{2x_1} > x_2 > \sqrt{x_1}$.\newline
(c) If $x_1\ge 1530$, then  
\begin{equation}
x_2 > \sqrt{2x_1} - \frac{\sqrt{2x_1}\log{2}}{2\log{x_1}}. \label{eq26}
\end{equation}
\end{lemma}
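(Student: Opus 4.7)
The unifying tool is the strict monotonicity of $F(\cdot, k)$ in its first argument (Definition \ref{expLabel}): to prove $x_k > y$ it suffices to prove $F(y, k) > \epsilon = F(x_1, 1)$, and the opposite inequality for the opposite bound. I would recast each of (a)--(c) in this form and then reduce to an elementary estimate.

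For (a), set $y = x_1^{1/k}$, so that the target $F(y,k) > F(y^k,1)$, after clearing the common factor $k\log y > 0$, becomes
$$\left(1 + \frac{1}{y+y^2+\cdots+y^k}\right)^k > 1 + \frac{1}{y^k}.$$
Bernoulli's inequality bounds the left-hand side below by $1 + k/(y+y^2+\cdots+y^k)$, and since $y > 1$ and $k \ge 2$ the geometric sum satisfies $y+y^2+\cdots+y^k < ky^k$, hence $k/(y+\cdots+y^k) > 1/y^k$, which finishes (a).

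For (b), the lower bound is the $k=2$ case of (a). For the upper bound, take $y = \sqrt{2x_1}$ and cross-multiply $F(y,2) < F(x_1,1)$ to arrive at
$$\log x_1 \cdot \log\frac{(1 + 1/(y^2+y))^2}{1 + 1/x_1} < \log 2 \cdot \log(1+1/x_1).$$
The right-hand side is positive. To see that the left-hand side is negative I verify $(1 + 1/(2x_1+\sqrt{2x_1}))^2 < 1 + 1/x_1$, which after clearing denominators reduces to the trivial $2x_1\sqrt{2x_1} + x_1 > 0$.

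Part (c) is the delicate step and I expect it to be the main obstacle. Starting from the defining identity $\log x_1 \cdot \log(1 + 1/(x_2^2+x_2)) = \log x_2 \cdot \log(1+1/x_1)$, I would use the upper bound $\log x_2 < (\log 2 + \log x_1)/2$ from (b) together with the second-order Taylor bounds $\log(1+u) < u$ and $e^V - 1 \le V + V^2$ to extract an explicit lower bound on $x_2^2 + x_2$ of the shape $2x_1/(1 + \log 2/\log x_1)$ minus a small $O(1)$ correction. Solving the resulting quadratic via $x_2 = \bigl(-1 + \sqrt{1 + 4(x_2^2+x_2)}\bigr)/2$ and expanding $\sqrt{1-u}$ to second order recovers the main term $\sqrt{2x_1}(1 - (\log 2)/(2\log x_1))$, together with positive higher-order terms of size $\sqrt{2x_1}/(\log x_1)^2$ that must absorb both the $-1/2$ from the quadratic formula and the negative remainders from the Taylor expansions. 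The hypothesis $x_1 \ge 1530$ is precisely the threshold at which this absorption succeeds, and the main challenge is to carry out each expansion with matched explicit (rather than asymptotic $O(\cdot)$) upper and lower bounds on $\log(1+u)$, $e^V$, and $\sqrt{1-u}$ so that the explicit threshold can be certified rigorously.
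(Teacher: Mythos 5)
Your approach coincides with the paper's (and Robin's): each bound is reduced to a comparison of values of $F$ and the strict monotonicity of $F(\cdot,k)$. For parts (a) and (b) your argument is correct and complete; in fact you supply the elementary verifications (Bernoulli's inequality together with $y+y^2+\cdots+y^k<ky^k$ for part (a), and the reduction of the upper bound in (b) to $a^2-2ax_1-x_1=2x_1\sqrt{2x_1}+x_1>0$ with $a=2x_1+\sqrt{2x_1}$) that the paper merely asserts by pointing to the inequality $F(t,k)>F(t^k,1)$ and citing \cite{13}. For part (c), however, what you give is a roadmap rather than a proof: the matched explicit bounds on $\log(1+u)$, $e^V$, and the square root, and the certification that $x_1\ge 1530$ suffices, are precisely where the content of the statement lies, and you defer all of them. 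The paper does the same, referring the reader to \cite{13} for part (c), so you are not behind the source on this point --- but as written your part (c) would not stand on its own and would need either the full computation you outline or an explicit citation to Robin's original argument.
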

\begin{proof} We sketch the proof from \cite{13}. Part (a) follows from the statement that
$F(t,k) > F(t^k,1)$ for $t>1$ by taking $t^k=x_1$. The strict monotonicity of $F(x,k)$ shows that we must have $x_k>t=x_1^{1/k}$ in order to obtain $F(x_k,k)=F(x_1,1)=\epsilon$.
Part (b) is proved similarly.  For part (c), see \cite{13}.\end{proof}

\begin{example}
Let $\epsilon = 0.021$. Using \eqref{expCA}, we compute $M = 4324320 = 2^5 \cdot 3^3 \cdot 5 \cdot 7 \cdot 11 \cdot 13$. In fact, there is a range of values of $\epsilon$ that yield the same colossally abundant number because of the floor function in \eqref{expCA}. We have $x_1 = 15.77$, $x_2=4.87$, $x_3 = 3.06$, $x_4=2.38$, and $x_5 = 2.03$. Thus, all primes $x_2 < p \le x_1$ have exponent 1, while all primes $x_3 < p \le x_2$ have exponent 2, and so on.
\end{example}

A list of colossally abundant numbers up to $10^{7}$ from \cite{1} is provided in Table 1. To obtain odd colossally abundant numbers, simply remove the powers of 2
from the factorizations. A larger table of odd colossally abundant numbers is in Section 6.

\begin{table}[ht]\label{ECA}
\caption{Colossally abundant numbers $< 10^7$.}
\begin{center}{\begin{tabular}{@{}rcl
@{}} \hline
$M$ && Factorization of $M$ \\ 
2 && 2 \\ 
 6 && $2 \cdot 3$ \\
 12 && $2^2 \cdot 3$ \\
 60 && $2^2 \cdot 3 \cdot 5$\\
 120 && $2^3 \cdot 3 \cdot 5$\\
 360 && $2^3 \cdot 3^2 \cdot 5$\\
 2520 && $2^3 \cdot 3^2 \cdot 5 \cdot 7$\\
 5040 && $2^4 \cdot 3^2 \cdot 5 \cdot 7$\\
 55440 && $2^4 \cdot 3^2 \cdot 5 \cdot 7 \cdot 11$\\
 720720 && $2^4 \cdot 3^2 \cdot 5 \cdot 7 \cdot 11 \cdot 13$\\
 1441440 && $2^5 \cdot 3^2 \cdot 5 \cdot 7 \cdot 11 \cdot 13$\\
 \phantom{aaaaa}4324320 && $2^5 \cdot 3^3 \cdot 5 \cdot 7 \cdot 11 \cdot 13$\phantom{aaaaa}\\
 \hline
\end{tabular}}\end{center}
\end{table}

The exponents in the prime factorizations of (odd or even) colossally abundant numbers are non-increasing as the primes increase, with a long tail of primes to the first power at the end of the factorization. 

In forming colossally abundant numbers, we choose $\epsilon$, and using $\epsilon$ and \eqref{expCA}, we find the colossally abundant number $N$ as well as the exponents of individual primes in $N$.

\begin{lemma} \label{primeEpsilon} For fixed $\epsilon > 0$ and $p$ any prime,  $\alpha_p(\epsilon) = 0$ if and only if $$\epsilon > \log_p{(p+1)} - 1.$$
\end{lemma}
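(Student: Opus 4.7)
The plan is to derive the equivalence directly from the formula in Lemma \ref{AlphaLemma}, or (even more cleanly) from Lemma \ref{xkLemma1}. I will use the latter route since it makes the inequality almost automatic.

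By Lemma \ref{xkLemma1}, $\alpha_p(\epsilon) = 0$ if and only if $p > x_1$, where $x_1$ is defined by $F(x_1,1) = \epsilon$. Because $F(x,1) = \log(1 + 1/x)/\log x$ is strictly decreasing in $x$, the condition $p > x_1$ is equivalent to $F(p,1) < \epsilon$, i.e.\
$$\frac{\log(1 + 1/p)}{\log p} < \epsilon.$$
Clearing denominators (note $\log p > 0$ since $p \ge 2$) gives $\log(p+1) - \log p < \epsilon \log p$, which rearranges to $\log(p+1) < (1+\epsilon)\log p$, and taking $\log_p$ yields exactly $\log_p(p+1) - 1 < \epsilon$. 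This is the stated condition.

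For readers who prefer a direct verification from Lemma \ref{AlphaLemma}, the same argument can be carried out by noting that $\alpha_p(\epsilon)=0$ means the floor in \eqref{expCA} equals $1$, i.e.\
$$1 \le \frac{\log(p^{1+\epsilon}-1) - \log(p^\epsilon - 1)}{\log p} < 2.$$
The left inequality is equivalent to $p^{1+\epsilon} - 1 \ge p(p^\epsilon - 1)$, i.e.\ $p \ge 1$, which is automatic; the right inequality is equivalent to $p^{1+\epsilon}-1 < p^2(p^\epsilon-1)$, which simplifies to $p+1 < p^{1+\epsilon}$ and hence to $\epsilon > \log_p(p+1) - 1$. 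Either route gives the lemma; there is no real obstacle, the only thing to be careful about is the direction of the inequality when using the monotonicity of $F$.
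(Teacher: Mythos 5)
Your proposal is correct, and your second route is essentially the paper's own proof: the authors argue directly from \eqref{expCA} that $\alpha_p(\epsilon)=0$ is equivalent to the quantity inside the floor being less than $2$, and they carry out exactly the algebra you give ($p^{1+\epsilon}(p-1)>p^2-1$, hence $p+1<p^{1+\epsilon}$, hence $\epsilon>\log_p(p+1)-1$). You are in fact slightly more careful than the paper, since you also verify that the lower bound $1\le\bigl(\log(p^{1+\epsilon}-1)-\log(p^\epsilon-1)\bigr)/\log p$ holds automatically (it reduces to $p\ge 1$), which is what licenses replacing ``floor equals $1$'' by the single upper inequality; the paper leaves this implicit. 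Your preferred first route, via Lemma \ref{xkLemma1} and the monotonicity of $F(x,1)$, is a clean repackaging of the same computation, but note one small wrinkle: Lemma \ref{xkLemma1} as stated gives $\alpha_p(\epsilon)=k$ for $x_{k+1}<p\le x_k$ with $x_k$ defined only for $k\ge 1$, so the assertion ``$\alpha_p(\epsilon)=0$ if and only if $p>x_1$'' is a (one-line, easily justified) extension of that lemma rather than a literal citation --- either observe that the intervals $(x_{k+1},x_k]$ for $k\ge1$ cover all $p\le x_1$ and that $\alpha_p\ge 0$ always, or just run the displayed chain in the proof of Lemma \ref{xkLemma1} with $k=0$ and drop the vacuous upper condition. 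With that caveat noted, both of your arguments are complete and correct.
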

\begin{proof}
$$\alpha_p(\epsilon) = \left\lfloor\frac{\log{(p^{1+\epsilon}-1) - \log(p^\epsilon - 1)}}{\log{p}}\right\rfloor - 1 = 0$$
$$\iff \frac{\log{(p^{1+\epsilon}-1)} - \log(p^\epsilon - 1)}{\log{p}} < 2$$
$$\iff p^{1+\epsilon}(p-1) > p^2-1$$
$$\iff \epsilon > \log_p{(p+1)} - 1.$$ 
\end{proof}

The following allows us to control the difference between a colossally abundant number and the associated odd colossally abundant number obtained by removing the power of $2$.

\begin{lemma}\label{expbound}
In the prime factorization of a colossally abundant number $M$, the exponent of 2 is less than $c\log\log M$ for some constant $c$ independent of $M$.
\end{lemma}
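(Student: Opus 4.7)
The plan is to exploit a chain of implications: a large exponent of $2$ in $M$ forces the parameter $\epsilon$ generating $M$ to be exponentially small in that exponent, which forces the threshold $x_1$ to be very large, which in turn forces $M$ to have enough prime factors that $\log M$ is enormous. One more logarithm then produces a bound of the form $k \le c \log\log M$.

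Concretely, let $k = \alpha_2(\epsilon)$ denote the exponent of $2$ in $M$. Specializing Lemma \ref{xkLemma1} to the prime $p = 2$ gives $x_{k+1} < 2 \le x_k$, and the strict monotonicity of $F(\cdot,k)$ yields
\[
\epsilon \;=\; F(x_k, k) \;\le\; F(2, k) \;=\; \frac{\log\!\left(1 + \tfrac{1}{2^{k+1} - 2}\right)}{\log 2} \;\le\; \frac{1}{(2^{k+1}-2)\log 2},
\]
where the last inequality uses $\log(1+y) \le y$. Hence $\epsilon \le A\cdot 2^{-k}$ for an absolute constant $A>0$. In the other direction, from $\epsilon = F(x_1,1) = \log(1+1/x_1)/\log x_1$, together with the elementary estimate $\log(1+1/x_1)\ge 1/(x_1+1) \ge 1/(2x_1)$ valid for $x_1\ge 1$, we obtain $\epsilon \ge 1/(2x_1\log x_1)$. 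Combining the two bounds yields $x_1\log x_1 \gg 2^k$, and in particular $x_1 \ge c_1\cdot 2^{k/2}$ for some absolute $c_1>0$.

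To finish, I would use that every prime $p\le x_1$ has $\alpha_p(\epsilon)\ge 1$ and hence divides $M$ (this is contained in Lemma \ref{xkLemma1}, or equivalently Lemma \ref{primeEpsilon}). Therefore $\log M \ge \theta(x_1)$, where $\theta$ is the Chebyshev function, and Chebyshev's lower bound $\theta(x_1)\gg x_1$ (valid once $x_1$ exceeds an absolute threshold) gives $\log M \ge c_2\cdot 2^{k/2}$. Taking one more logarithm produces $k \le c\log\log M$ for some absolute constant $c$, which is the lemma; any finitely many small $M$ for which this initially fails are handled by enlarging $c$. The only real content is the two-sided estimate on $\epsilon$ in terms of $k$ and $x_1$, and neither direction is difficult once the definition of $F(x,k)$ is unpacked; the main bookkeeping concern is simply choosing the constants generously enough that the bound holds uniformly for all $M\ge 2$.
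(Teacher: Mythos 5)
Your proof is correct, and it follows the same three-step skeleton as the paper's: squeeze $\epsilon$ between an upper bound that decays exponentially in the exponent $k$ of $2$ and a lower bound that is polynomial in $1/x_1$, deduce $k = O(\log x_1)$, and finish with a Chebyshev-type bound $\log M \ge \theta(x_1) \gg x_1$. Where you differ is in how the two $\epsilon$-estimates are obtained, and your derivations are arguably cleaner. For the upper bound, the paper works directly with the explicit formula \eqref{expCA} for $\alpha_2(\epsilon)$, using the calculus inequalities $\log(2^{1+\epsilon}-1)\le (2\log 2)\epsilon$ and $\epsilon\log 2\le 2^\epsilon-1$ to extract $\alpha_2(\epsilon)\le -k\log\epsilon$; you instead read off $\epsilon\le F(2,k)\le A\cdot 2^{-k}$ from Lemma \ref{xkLemma1} and the formula for $F$, which is the same estimate with less bookkeeping. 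For the lower bound, the paper takes the largest prime factor $p$ of $M$, invokes Bertrand's postulate to find a prime $q$ with $x_1<q<2x_1$, and applies Lemma \ref{primeEpsilon} to $q$ to get $\epsilon>\log_{2x_1}(2x_1+1)-1$; you bypass Bertrand entirely by using the defining relation $\epsilon=F(x_1,1)$ together with $\log(1+1/x_1)\ge 1/(2x_1)$. Both routes land on $\epsilon\gg 1/(x_1\log x_1)$ up to constants, and the remainder of the argument is identical. Your version buys a shorter proof that leans only on machinery (the function $F$ and the numbers $x_k$) already set up for the rest of the paper; the paper's version has the minor advantage of not needing to invoke the definition of $x_1$ as the solution of $F(x,1)=\epsilon$, working instead with the exponent formula that is stated in Lemma \ref{AlphaLemma}.
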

\begin{proof}
Let $\epsilon$ be  a value yielding the colossally composite number $M$, as in Definition \ref{MainDef}. By Lemma \ref{AlphaLemma}, the exponent of 2 is
$$\alpha_2(\epsilon) = \left\lfloor\frac{\log{(2^{1+\epsilon}-1)} - \log{(2^\epsilon - 1)}}{\log{2}}\right\rfloor-1.$$
For $\epsilon \geq 0$, we have $\log{(2^{1+\epsilon}-1)} \leq \left(2\log 2\right)\epsilon$ and $\epsilon\log{2} \leq 2^\epsilon - 1$, since both inequalities have equality at $\epsilon = 0$, and the derivatives of the LHS are less than the derivatives of the RHS for $\epsilon > 0$.
Thus, we have that 
$$\alpha_2(\epsilon) \leq \left\lfloor\frac{ \left(2\log 2\right)\epsilon - \log(\epsilon\log 2)}{\log{2}}\right\rfloor-1.$$
Since $\alpha_2(\epsilon)\ne 0$, Lemma \ref{primeEpsilon} with $p=2$ implies that $\epsilon \le \log_2(3)-1 < 0.6$, so 
$$\alpha_2(\epsilon) \leq -k\log{\epsilon},$$ where $k > 0$ is some constant.
Let $p$ be the largest prime factor dividing $M$. If $q$ is the next prime after $p$, Lemma \ref{xkLemma1} implies that $p < x_1< q$, where $x_1$ is as in 
Definition \ref{expLabel}. By Bertrand's Postulate, there is a prime $q$ with $x_1 < q < 2x_1$. Thus $\alpha_q(\epsilon) = 0$, so by 
Lemma \ref{primeEpsilon}, $$\epsilon > \log_q(q+1) - 1 > \log_{2x_1}(2x_1+1) - 1.$$
Therefore, $\epsilon > \frac{\log\left( 1 + \frac{1}{2x_1} \right)}{\log{2x_1}}$, so
$$\log{\epsilon} > \log{\log{\left( 1+\frac{1}{2x_1} \right)}} - \log{\log{2x_1}} > -3\log{x_1}$$
for $x_1 \geq 2$. Thus $\alpha_2(\epsilon) < c'\log{x_1}$ for some constant $c'$. 

Since $\log M \ge \sum_{p\le x_1} \log p \ge c'' x_1$ for some $c''>0$ (a weak form of the Prime Number Theorem), we 
obtain $\alpha_2(\epsilon) <  c \log\log M$ for some constant $c$.
\end{proof}

\section{Robin's Criterion}

For each odd colossally abundant number $N$, we implicitly choose, once and for all,  an $\epsilon>0$ as in Definition \ref{MainDef}. This yields numbers $x_1>x_2>x_3>\cdots$ as in Definition \ref{expLabel}.
In the remainder of the paper, the only properties we need for the $x_k$ are those given in Lemmas \ref{xkLemma1} and \ref{xkLemma2}. In other words,
these auxiliary numbers tell us which primes have which exponents in the factorization of $N$, and all we need are the estimates on these numbers given by Lemma \ref{xkLemma2}.   The value of $\epsilon$ does not enter into any of the proofs. Therefore, even though the numbers $x_k$ depend on $\epsilon$, we do not include 
$\epsilon$ in the notation.  
However,  $p\mid N$ if and only if $p\le x_1$, so for a given $N$ there is a limited range of values possible for $x_1$, and similarly for the other $x_k$.
A statement such as ``If $x_1\ge 120409$'' in Lemma 3.3 could be replaced by  "If the largest prime factor of $N$ is at least 120409.'' The checking of
smaller cases would then be for $N$ with largest prime factor less than 120409, which is what we do in the proof of Proposition 3.5. But using the numbers $x_k$
is a way to have a much smoother and more coherent framework in which to work.

Throughout this section, we let $N$ be a fixed odd colossally abundant number with the chosen  $x_1, x_2, \dots$.

Let $\theta(x) = \sum_{p\leq x} \log{p}$ be Chebyshev's first function. Let $\theta^\prime(x)$ be the same as $\theta(x)$ but summing over odd primes only.
Then $\theta'(x)=\theta(x)-\log 2$ for $x\ge 2$. 

\begin{lemma}\label{thetaprimeLemma}
Let $x\ge 347$ be a real number and let 
$$
A(x)=\theta'(x)+\theta'(x^{2/3}/2^{1/3}) + \theta'(x^{1/2}/2^{1/4}).
$$
Then $A(x) > .998x$.
\end{lemma}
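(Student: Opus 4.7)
The plan is to rewrite $A(x)$ in terms of the ordinary Chebyshev function $\theta$ and then combine known explicit lower bounds for $\theta$ with a finite computation for small $x$. Since $\theta'(y) = \theta(y) - \log 2$ whenever $y \geq 2$, and for $x \geq 347$ each of the three arguments $x$, $x^{2/3}/2^{1/3}$, and $x^{1/2}/2^{1/4}$ exceeds $2$, we may rewrite
$$A(x) = \theta(x) + \theta\!\bigl(x^{2/3}/2^{1/3}\bigr) + \theta\!\bigl(x^{1/2}/2^{1/4}\bigr) - 3\log 2.$$
The target inequality is therefore $\theta(x) + \theta(x^{2/3}/2^{1/3}) + \theta(x^{1/2}/2^{1/4}) > 0.998\,x + 3\log 2$.

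For the large-$x$ regime, I would invoke an explicit estimate of Rosser--Schoenfeld (or its sharper Dusart refinement) of the form $\theta(x) \geq (1-\eta(x))x$ with an explicit $\eta(x)\to 0$, and choose a threshold $X_0$ large enough that $(1-\eta(x))x > 0.998\,x + 3\log 2$ already holds for $x \geq X_0$. Since the two smaller $\theta$-terms are positive, this immediately yields $A(x) > 0.998\,x$ on $[X_0, \infty)$ without needing any quantitative information about those terms.

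For the small-$x$ regime $347 \leq x < X_0$, I would argue by direct finite check. The function $A(x)$ is nondecreasing and piecewise constant, jumping upward only at the finitely many points in $[347, X_0)$ where one of $x$, $x^{2/3}/2^{1/3}$, $x^{1/2}/2^{1/4}$ crosses a prime. Between two consecutive jump points $t_i < t_{i+1}$, $A$ is constant while $0.998\,x$ strictly increases, so it is enough to verify $A(t_i) > 0.998\,t_{i+1}$ at each jump. Since the jump locations are pulled back via $p$, $2^{1/3}p^{3/2}$, and $2^{1/4}p^2$ from known primes, this is a wholly mechanical calculation.

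The main obstacle is that the constant $0.998$ is tightly calibrated: near the lower endpoint $x=347$, the dominant term $\theta(x)$ alone does \emph{not} exceed $0.998\,x$ (one has $\theta(x)/x$ only around $0.96$ for $x$ in the low hundreds), so the two secondary $\theta$-terms are genuinely needed for small $x$. Consequently, neither a single clean inequality nor a very small $X_0$ will suffice; the threshold $X_0$ must be chosen large enough that the explicit Rosser--Schoenfeld bound has room to absorb the $3\log 2$ loss, yet small enough that the prime-by-prime verification on $[347, X_0)$ remains manageable.
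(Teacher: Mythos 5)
Your proposal is sound and would prove the lemma, but it distributes the work differently from the paper. Both arguments share the same skeleton: convert $\theta'$ to $\theta$ (absorbing the $\log 2$ losses), invoke Schoenfeld's bound $\theta(x)>0.998x$ for $x\ge 487381$ in the large range, and finish the low range by computation. The difference is the middle range $559\le x\le 487381$, where $\theta(x)>0.998x$ is false and, as you note, the two secondary terms are genuinely needed --- in fact they are needed throughout this entire range, not just ``near the lower endpoint,'' since $\theta(x)/x$ stays below $0.998$ until $x$ is on the order of $10^5$--$10^6$. The paper handles this analytically: it applies the Rosser--Schoenfeld lower bound $\theta(y)>y-2.05282\,y^{1/2}$ (valid for $0<y\le 10^8$) to all three terms, so that $A(x)-0.998x$ is bounded below by an explicit polynomial in $t=x^{1/12}$ whose largest real root corresponds to $x\approx 558$; this reduces the computational check to the integers in $[347,559]$ (about two hundred values, extended to real $x$ by the same monotonicity argument you use). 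Your plan instead verifies the inequality at every jump point of $A$ up to the Schoenfeld threshold, which is on the order of $4\times 10^4$ primes and their preimages under $p\mapsto 2^{1/3}p^{3/2}$ and $p\mapsto 2^{1/4}p^2$. That is still a perfectly feasible and rigorous finite computation, and your jump-point criterion $A(t_i)>0.998\,t_{i+1}$ is the correct one for a right-continuous step function against an increasing linear function; what the paper's extra lemma buys is a computation smaller by two orders of magnitude and an argument that makes visible \emph{why} the three-term sum clears the bound in the middle range, rather than certifying it point by point.
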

\begin{proof}
From \cite[p. 265]{16}, if $x\ge 487381$, then $\theta(x)>.998x$. Since $\theta'(x^{2/3}/2^{1/3})> \log 2$,
we have $A(x)\ge \theta'(x)+\theta'(x^{2/3}/2^{1/3}) > .998x$.

From \cite[p. 72]{15}, if $0<x\le 10^8$ then $\theta(x)> x-2.05282x^{1/2}$. Therefore,
$$
A(x)-.998x > .002x + \frac{x^{2/3}}{2^{1/3}} + \frac{x^{1/2}}{2^{1/4}} - 2.05282\left(x^{1/2}+\frac{x^{1/3}}{2^{1/6}} + \frac{x^{1/4}}{2^{1/8}}\right) - \log(8).
$$
The right side of this inequality is a polynomial in $t=x^{1/12}$, and the 12th power of the largest real root $t$ is $558.06$. Therefore, $A(x)-.998x>0$ for $x\ge 559$.
A quick computer calculation shows that $A(n)-.998n\ge 1$ for all integers $n$ with $347\le n\le 559$. If $x$ is a real number with $347\le x\le 559$, write $x=n+\eta$ with $0\le \eta<1$.
Then 
$$A(x)\ge A(n)\ge 1+.998n = 1+.998(x-\eta)> .998x.
$$
This proves the lemma. \end{proof}

\begin{lemma} \label{sumThetaBound} Let $N$ be an odd colossally abundant number with associated numbers $x_1, x_2$.  If $x_2\ge 347$, then 
$$
\log(N) > \theta'(x_1)+.998x_2.
$$
\end{lemma}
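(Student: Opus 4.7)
The plan is to expand $\log N$ via the prime factorization of $N$, apply summation by parts to obtain a telescoping series in $\theta'(x_k)$, and then plug in the lower bounds on $x_3$ and $x_4$ from Lemma \ref{xkLemma2} so that we can feed everything into Lemma \ref{thetaprimeLemma}.

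First I would write
\[
\log N \;=\; \sum_{p \text{ odd prime}} \alpha_p(\epsilon)\log p.
\]
By Lemma \ref{xkLemma1}, the odd primes with $\alpha_p(\epsilon) = k$ are exactly those in $(x_{k+1}, x_k]$, so
\[
\log N \;=\; \sum_{k \ge 1} k\bigl(\theta'(x_k) - \theta'(x_{k+1})\bigr).
\]
Only finitely many terms are nonzero (since $x_k \to 1$), so a standard summation-by-parts rearrangement telescopes this into
\[
\log N \;=\; \sum_{k \ge 1} \theta'(x_k).
\]

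Next I would isolate the first term and bound the remainder from below by just three terms:
\[
\log N - \theta'(x_1) \;=\; \sum_{k \ge 2} \theta'(x_k) \;\ge\; \theta'(x_2) + \theta'(x_3) + \theta'(x_4),
\]
using that all $\theta'(x_k) \ge 0$. Now I need to estimate $x_3$ and $x_4$ in terms of $x_2$ so as to match the form of $A(x)$ appearing in Lemma \ref{thetaprimeLemma}. From Lemma \ref{xkLemma2}(b), $x_2 < \sqrt{2x_1}$, hence $x_1 > x_2^2/2$. Plugging this into Lemma \ref{xkLemma2}(a) gives
\[
x_3 \;>\; x_1^{1/3} \;>\; \frac{x_2^{2/3}}{2^{1/3}}, \qquad x_4 \;>\; x_1^{1/4} \;>\; \frac{x_2^{1/2}}{2^{1/4}},
\]
which are precisely the arguments of $\theta'$ appearing in $A(x_2)$.

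By monotonicity of $\theta'$, the three-term lower bound becomes
\[
\theta'(x_2) + \theta'(x_3) + \theta'(x_4) \;\ge\; \theta'(x_2) + \theta'\!\bigl(x_2^{2/3}/2^{1/3}\bigr) + \theta'\!\bigl(x_2^{1/2}/2^{1/4}\bigr) \;=\; A(x_2).
\]
Since $x_2 \ge 347$ by hypothesis, Lemma \ref{thetaprimeLemma} yields $A(x_2) > .998\, x_2$, giving $\log N > \theta'(x_1) + .998\, x_2$ as desired. The only mildly delicate point is justifying the passage from the prime-factorization sum to the telescoped form $\sum_k \theta'(x_k)$; once that is in hand, the rest is a direct combination of Lemma \ref{xkLemma2} and Lemma \ref{thetaprimeLemma}.
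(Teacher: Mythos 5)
Your proof is correct and follows essentially the same route as the paper: both establish the identity $\log N=\sum_{k\ge 1}\theta'(x_k)$ (you via a summation-by-parts telescoping, the paper via a direct counting argument), truncate the tail at $\theta'(x_2)+\theta'(x_3)+\theta'(x_4)$, and use Lemma \ref{xkLemma2} to reduce to $A(x_2)$ and Lemma \ref{thetaprimeLemma}. No gaps.
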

\begin{proof}
We have 
\begin{equation} \label{eq9}
    \log{N} = \sum_{i \geq 1} \theta^\prime(x_i).
\end{equation}
We can see this as follows: $\theta^\prime(x_1)$ accounts for all the odd primes up to $x_1$ once, then $\theta^\prime(x_2)$ accounts for all the odd primes up to $x_2$ an additional time, and so on for each $x_i$. Thus we sum all $\log{p}$ an appropriate number of times for each prime.

Lemma \ref{xkLemma2} implies that
\begin{equation}\label{eq3}
    x_1 > \frac{{x_2}^2}{2}.
\end{equation}
and therefore
\begin{equation}
    x_k > \left(\frac{{x_2}^2}{2}\right)^{\frac{1}{k}}.
\end{equation}
for $k\ge 2$.
Using these inequalities, we find that
\begin{align*}
\theta'(x_1)+\theta'(x_2)+\theta'(x_3)+\cdots &\ge \theta'(x_1) + \theta'(x_2)+\theta'(x_2^{2/3}/2^{1/3}) + \theta'(x_2^{1/2}/2^{1/4})\\
&= \theta'(x_1)+A(x_2) > \theta'(x_1)+.998x_2.
\end{align*}
This proves the lemma. \end{proof}

\begin{lemma} \label{logBound}
Let $N$ be odd colossally abundant $N$ with associated $x_1, x_2$. If $x_1\ge 120409$, then 
$$\log{\log{N}} > \log{\theta(x_1)}\exp{\left(\frac{0.99154x_2}{x_1\log{x_1}}\right)}.$$ 
\end{lemma}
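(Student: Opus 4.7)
The plan is to start from Lemma~\ref{sumThetaBound}, whose hypothesis $x_2 \ge 347$ is guaranteed by the given condition $x_1 \ge 120409$: Lemma~\ref{xkLemma2}(b) yields $x_2 > \sqrt{x_1} > \sqrt{120409} > 347$. Applying Lemma~\ref{sumThetaBound} and converting $\theta'$ to $\theta$ via $\theta' = \theta - \log 2$, I would obtain
$$\log N > \theta(x_1) + \bigl(.998\, x_2 - \log 2\bigr).$$
Setting $T = \theta(x_1)$ and $s = .998\, x_2 - \log 2$, I would then take another logarithm and factor out $\log T$:
$$\log\log N > \log(T+s) = \log T + \log\!\left(1 + \frac{s}{T}\right).$$

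The target inequality then reduces to showing
$$\log\!\left(1 + \frac{s}{T}\right) > (\log T)\bigl(e^{u} - 1\bigr),\quad u := \frac{0.99154\, x_2}{x_1 \log x_1}.$$
I would bound both sides by their first Taylor corrections: $\log(1+y) \ge y - y^2/2$ and $e^u - 1 \le u + u^2$. The latter is legitimate because $u$ is tiny — Lemma~\ref{xkLemma2}(b) gives $x_2 < \sqrt{2x_1}$, so $u = O(x_1^{-1/2}/\log x_1)$. It would then suffice to verify
$$\frac{s}{T} - \frac{s^2}{2T^2} > (\log T)(u + u^2).$$
Dividing through by $x_2/(x_1 \log x_1)$ and using $T = \theta(x_1) \approx x_1$, $\log T \approx \log x_1$, the leading-order comparison pits $.998$ on the left against $0.99154$ on the right. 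The multiplicative margin $.998/0.99154 \approx 1.0065$ is what must absorb all the lower-order errors.

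The main obstacle will be the tightness of that $\approx 0.65\%$ margin, which forces sharp explicit prime estimates. To keep $\theta(x_1)\log\theta(x_1)$ from exceeding $x_1\log x_1$ by more than the available slack, I would invoke a Rosser--Schoenfeld-type upper bound $\theta(x) \le (1+\delta)x$ with $\delta$ small enough in the range $x_1 \ge 120409$, and use the bound $\theta(x) > x - 2.05282\sqrt{x}$ from \cite{15} to control the lower side as well. The $-\log 2$ in $s$, the quadratic Taylor remainders, and the $\theta(x_1)$ vs.\ $x_1$ discrepancy are each $o(1)$ as $x_1 \to \infty$, so the inequality is automatic past some threshold; the specific cutoff $x_1 \ge 120409$ is the value at which all the explicit constants simultaneously line up, and any tiny residual range (if the asymptotic bounds alone do not quite reach down to $120409$) would be verified by direct computation, analogously to the finite-range portion of Lemma~\ref{thetaprimeLemma}.
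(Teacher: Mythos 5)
Your plan is correct and follows essentially the same route as the paper: both start from Lemma~\ref{sumThetaBound} (with $x_2>\sqrt{x_1}\ge 347$), reduce the claim to $\log(1+s/T)\ge(\log T)(e^u-1)$ with $T=\theta(x_1)$, replace the transcendental functions by elementary rational/Taylor bounds, insert $x_2<\sqrt{2x_1}$ and an explicit bound $\theta(x_1)<1.000081\,x_1$, and settle the resulting explicit inequality numerically at the cutoff $x_1\ge 120409$. The only cosmetic difference is that the paper uses $\log(1+x)\ge x/(1+x)$ twice where you use second-order Taylor truncations, and your error budget ($\approx 0.45\%$ against the $0.998/0.99154$ margin of $\approx 0.65\%$) does close.
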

\begin{proof}
If $x_1\ge 120409$, then $x_2\ge \sqrt{x_1} \ge 347$. Since $\theta'(x_1)=\theta(x_1)-\log 2$,
$$    \log{N} > \theta(x_1) -\log 2 + 0.998x_2 = \theta(x_1)\left(1+\frac{.998x_2-\log 2}{\theta(x_1)}\right).
$$
Let $y = \frac{0.998x_2 - \log{2}}{\theta(x_1)}$. Using the fact that $\log{(1+x)} \geq \frac{x}{1+x}$ for $x > -1$, we have the following:
\begin{align}
    \log{\log{N}} &\geq \log{\theta(x_1)} + \log{(1+y)} \\
    &\geq \log{\theta(x_1)} + \frac{y}{1+y} \\
    &= (\log{\theta(x_1)})\left(1 + \frac{y}{(1+y)\log{\theta(x_1)}}\right).
\end{align}
We want to find some constant $C$ such that the following is true:
\begin{equation}
    1+\frac{y}{(1+y)\log{\theta(x)}} \geq \exp{\frac{Cx_2}{x_1\log{x_1}}} \label{eq317}
\end{equation}
\begin{equation}
    \log{\left(1+\frac{y}{(1+y)\log{\theta(x_1)}}\right)} \geq \frac{Cx_2}{x_1\log{x_1}}.
\end{equation}
Using again the fact that $\log{(1+x)} \geq \frac{x}{1+x}$, we want 
\begin{equation}
    \frac{\frac{y}{(1+y)\log{\theta(x_1)}}}{1+\frac{y}{(1+y)\log{\theta(x_1)}}} \geq \frac{Cx_2}{x_1\log{x_1}}
\end{equation}
\begin{equation} \label{eq20}
    \frac{y}{y + (1+y)\log{\theta(x_1)}} \geq \frac{Cx_2}{x_1\log{x_1}}.
\end{equation}
Multiplying the numerator and denominator of the LHS of \eqref{eq20} by $\theta(x_1)$ and using the definition of $y$, we have 
\begin{gather*}
    \frac{\theta(x_1)y}{\theta(x_1)y + (\theta(x_1)+\theta(x_1)y)\log{\theta(x_1)}}\\
 = \frac{0.998x_2-\log{2}}{0.998x_2 - \log{2}+(0.998x_2-\log{2} + \theta(x_1))(\log{\theta(x_1)})} \\
    \geq \frac{0.996x_2}{0.998x_2 - \log{2}+(0.998x_2-\log{2} + \theta(x_1))(\log{\theta(x_1)})}. \label{eq22}
\end{gather*}
To prove inequality \eqref{eq20}, it now suffices to show that
\begin{equation}
0.998x_2 - \log{2}+   (0.998x_2-\log{2} + \theta(x_1))(\log{\theta(x_1)}) \leq \frac{0.996}{C}x_1\log{x_1}.
\end{equation}
Using the fact that $\theta(x_1) < 1.000081x_1$ \cite[p. 194]{13} and $x_2 < \sqrt{2x_1}$ (Lemma \ref{xkLemma2}), we want to show the following for some constant $C$:
\begin{equation}
 0.998\sqrt{2x_1} - \log{2}+   (0.998\sqrt{2x_1}-\log{2} + 1.000081x_1)(\log{1.000081x_1}) \leq \frac{0.996}{C}x_1\log{x_1}. \label{eq25}
\end{equation} 
Using a computer, we find that $C = 0.99154$ is the largest positive value that satisfies \eqref{eq25} for all $x_1 \geq 120409$. Substituting this into \eqref{eq317}, we get the desired result. \end{proof}

In the remainder of the paper, we call the inequality $$\frac{\sigma(n)}{n\log\log n } < \frac{e^\gamma}{2}$$ for odd $n$ the \textit{analogue to Robin's inequality}.
\begin{lemma} \label{CAnum}
Suppose an odd $n$ violates the analogue to Robin's inequality. If $N\leq n \leq N^\prime$, where $N$ and $N^\prime$ are consecutive odd colossally abundant numbers, then $N$ or $N^\prime$ violates the analogue to Robin's inequality.
\end{lemma}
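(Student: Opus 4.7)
The plan is to exploit the transition parameter $\epsilon_\ast > 0$ at which the odd colossally abundant structure switches from $N$ to $N'$: this is the (unique) value satisfying
$$\frac{\sigma(N)}{N^{1+\epsilon_\ast}} \;=\; \frac{\sigma(N')}{{N'}^{1+\epsilon_\ast}} \;\geq\; \frac{\sigma(k)}{k^{1+\epsilon_\ast}} \quad\text{for every odd } k \ge 3.$$
Such an $\epsilon_\ast$ exists by continuity: as $\epsilon$ decreases through the range on which $N$ is odd colossally abundant, eventually $N'$ overtakes $N$ as the maximizer, and at the crossover the two maxima agree. The strict-inequality convention in Definition \ref{MainDef} means neither $N$ nor $N'$ is literally odd colossally abundant at $\epsilon_\ast$, but passing to this boundary by continuity is harmless because $\sigma(k)/k^{1+\epsilon}$ is continuous in $\epsilon$.

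Given $\epsilon_\ast$, for any odd $n$ with $N\le n\le N'$ the inequality applied to $k=n$ and multiplication by $n^{\epsilon_\ast}$ yields
$$\frac{\sigma(n)}{n} \;\le\; \frac{\sigma(N)}{N^{1+\epsilon_\ast}}\, n^{\epsilon_\ast} \;=\; \frac{\sigma(N')}{{N'}^{1+\epsilon_\ast}}\, n^{\epsilon_\ast}.$$
Dividing by $\log\log n$, define the interpolating function
$$\phi(n) \;:=\; \frac{\sigma(N)}{N^{1+\epsilon_\ast}}\cdot\frac{n^{\epsilon_\ast}}{\log\log n}.$$
Then $\sigma(n)/(n\log\log n) \le \phi(n)$, and the matching at $\epsilon_\ast$ yields the crucial boundary identities $\phi(N) = \sigma(N)/(N\log\log N)$ and $\phi(N') = \sigma(N')/(N'\log\log N')$, so $\phi$ interpolates the two endpoint values of Robin's function exactly.

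Next I would show that $\phi$ attains its maximum on $[N,N']$ at an endpoint. The logarithmic derivative is
$$(\log\phi)'(n) \;=\; \frac{1}{n}\left(\epsilon_\ast - \frac{1}{\log n \, \log\log n}\right),$$
and the bracketed factor is increasing in $n$ (for $n\ge 3$), so $(\log\phi)'$ changes sign at most once, from negative to positive. Hence $\phi$ is either monotone or first decreases and then increases on $[N,N']$, and in every case
$$\max_{N\le n\le N'}\phi(n) \;=\; \max\!\left(\frac{\sigma(N)}{N\log\log N},\; \frac{\sigma(N')}{N'\log\log N'}\right).$$
Putting the two ingredients together gives $\sigma(n)/(n\log\log n)\le \max(f(N),f(N'))$ for every odd $n\in[N,N']$, so if $n$ violates the analogue of Robin's inequality, then $N$ or $N'$ must violate it as well.

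The main obstacle is really the setup in the first paragraph: recognizing that a single transition parameter $\epsilon_\ast$ can be used to tie the bounds at $N$ and $N'$ together through the same $\phi$. Once this is in hand, the endpoint-maximum claim is a one-line calculus check and the rest is bookkeeping.
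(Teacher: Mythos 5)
Your proposal is correct and is essentially the argument the paper invokes (it cites Robin's Proposition 1 and its own Lemma \ref{lm44}): one takes the transition value $\epsilon_\ast$ at which $\sigma(N)/N^{1+\epsilon_\ast}=\sigma(N')/{N'}^{1+\epsilon_\ast}$ dominates all odd $k$, and then a convexity/monotonicity check shows the comparison function is extremal at an endpoint of $[N,N']$. Your packaging via the interpolating function $\phi$ and the single sign change of $(\log\phi)'$ is just a rephrasing of the concavity argument used for $s(t)$ in the proof of Lemma \ref{lm44}, so there is nothing substantively different to flag.
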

\begin{proof}
The proof is the same as that of \cite[Section 3, Proposition 1]{13}. See also the proof of Lemma \ref{lm44}.
\end{proof}

\begin{prop} \label{robinProp}
Assuming the Riemann hypothesis, we have \begin{equation}\label{RobinOddRe}
\frac{\sigma(n)}{n \log{\log{n}}}  <  \frac{e^\gamma}{2}
\end{equation} 
for all odd $n \geq c_0 = 18565284664427130919514350125 = 3^4 \cdot 5^3 \cdot 7^2 \cdot 11 \cdots 67$. 
\end{prop}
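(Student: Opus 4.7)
The plan is to model the argument on Robin's original proof of the main equivalence in \cite{13}, with the odd-colossally-abundant tools assembled in Section 2 playing the role of their classical counterparts. By Lemma \ref{CAnum}, any odd $n\ge c_0$ violating the analogue to Robin's inequality would force a violation at an adjacent odd colossally abundant $N$, so it suffices to verify (\ref{RobinOddRe}) at every odd CA number $N\ge c_0$. The task is then to bound $\sigma(N)/(N\log\log N)$ from above for such $N$, splitting into a large-$x_1$ range handled analytically and a finite small-$x_1$ range handled computationally.

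For the analytic range I would start from the multiplicative identity
$$
\frac{\sigma(N)}{N} \;=\; \prod_{3\le p\le x_1}\frac{1-p^{-\alpha_p(\epsilon)-1}}{1-1/p}\;<\;\frac{1}{2}\prod_{p\le x_1}\frac{p}{p-1},
$$
and apply the RH-conditional Mertens-type upper bound
$$
\prod_{p\le x_1}\frac{p}{p-1}\;\le\; e^{\gamma}\log\theta(x_1)\cdot(1+\eta(x_1)),
$$
with $\eta(x_1)\to 0$ effectively (a standard consequence of the RH estimate $\theta(x)-x=O(\sqrt{x}\log^2 x)$ together with partial summation, as used in \cite{13}). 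Combined with the lower estimate
$$
\log\log N\;>\;\log\theta(x_1)\exp\!\left(\tfrac{0.99154\,x_2}{x_1\log x_1}\right)
$$
from Lemma \ref{logBound}, this gives
$$
\frac{\sigma(N)}{N\log\log N}\;<\;\frac{e^\gamma}{2}\cdot\frac{1+\eta(x_1)}{\exp\!\bigl(0.99154\,x_2/(x_1\log x_1)\bigr)},
$$
and using $x_2>\sqrt{x_1}$ from Lemma \ref{xkLemma2} one reduces to verifying that $\exp(0.99154/(\sqrt{x_1}\log x_1))>1+\eta(x_1)$ for all $x_1$ above an explicit threshold $x_1^*$.

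The main obstacle is calibrating $\eta(x_1)$ and $x_1^*$ so that the finitely many odd CA numbers $N\ge c_0$ whose largest prime factor is below $x_1^*$ remain computationally tractable. The gap supplied by Lemma \ref{logBound} is only of order $1/(\sqrt{x_1}\log x_1)$, while a naive RH error in Mertens' product is of order $\log^2 x_1/\sqrt{x_1}$, so the conditional Mertens bound has to be invoked in a sharp form (as Robin did) rather than a textbook one; this is where most of the quantitative work lives. Once the threshold is established, the residual list of odd CA numbers $N\ge c_0$ with $x_1<x_1^*$ is enumerated and $\sigma(N)/(N\log\log N)$ is tabulated and compared with $e^\gamma/2$; the table in Section 6 records these computations and identifies $c_0 = 3^4\cdot 5^3\cdot 7^2\cdot 11\cdots 67$ as the last odd CA number at which the analogue to Robin's inequality comes close to failing.
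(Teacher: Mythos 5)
Your overall architecture matches the paper's: reduce to odd colossally abundant $N$ via Lemma \ref{CAnum}, handle small $x_1$ by computation, and for large $x_1$ play an RH-conditional Mertens upper bound for $\sigma(N)/N$ against the lower bound for $\log\log N$ from Lemma \ref{logBound}. But there is a genuine quantitative gap in the analytic step, and it is not the one you flagged. By bounding
$$
\frac{\sigma(N)}{N}<\frac12\prod_{p\le x_1}\frac{p}{p-1}
$$
you throw away the information that every prime $p$ with $x_2<p\le x_1$ divides $N$ exactly once, so that its local factor is $1+1/p=(1-1/p^2)(1-1/p)^{-1}$ rather than $(1-1/p)^{-1}$. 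The paper keeps this factor (its inequality \eqref{eq31}) and bounds it by Robin's Lemma 6 (inequality \eqref{eq30}), which contributes a \emph{negative} term $-\sqrt2/(\sqrt{x_1}\log x_1)$ to the exponent. This term is indispensable: the sharpest RH-conditional Mertens bound available (Robin's Lemma 5, inequality \eqref{eq29}) still carries a positive leading error $(2+c)/(\sqrt{x_1}\log x_1)$ with $2+c\approx 2.046$, whereas the gain from Lemma \ref{logBound} is only $0.99154\,x_2/(x_1\log x_1)$, at best $0.99154\sqrt2\approx 1.402$ times $1/(\sqrt{x_1}\log x_1)$ even after invoking the refined estimate $x_2>\sqrt{2x_1}-\sqrt{2x_1}\log 2/(2\log x_1)$ of Lemma \ref{xkLemma2}(c) (and only $\approx 0.992$ with the cruder $x_2>\sqrt{x_1}$ you use). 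So your final ratio is bounded above by roughly $\frac{e^\gamma}{2}\exp\bigl((2.046-1.402)/(\sqrt{x_1}\log x_1)\bigr)>\frac{e^\gamma}{2}$, and no amount of sharpening the Mertens constant alone will make the leading coefficient negative; the paper's coefficient $-0.7702=-(2\sqrt2\cdot(\text{roughly }0.996)-(2+c))$ only emerges after all three ingredients are combined.

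A secondary, smaller issue: your closing sentence asserts that $c_0$ is the last odd colossally abundant number at which the inequality ``comes close to failing,'' but the paper's computation shows the 23 odd CA numbers below $c_0$ actually \emph{violate} the analogue to Robin's inequality, the last violator being $3^4\cdot5^3\cdot7^2\cdot11\cdots61$; $c_0$ is the first odd CA number that satisfies it, which is exactly why the cutoff must be placed at $c_0$ (Lemma \ref{CAnum} gives no information about $n$ strictly between a violating $N$ and $c_0$). This does not affect the logic of the large-$x_1$ argument, but it is why the threshold in the statement cannot be lowered by the method.
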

\begin{proof}
We restrict our attention to odd colossally abundant $n$. By Lemma \ref{CAnum}, if all odd colossally abundant numbers greater than or equal to $c_0$ satisfy $\frac{\sigma(n)}{n \log{\log{n}}} < \frac{e^\gamma}{2}$, then there can be no other values greater than $c_0$ that violate this inequality.

Using a computer, we find that the inequality (\ref{RobinOddRe}) is satisfied for all odd colossally abundant numbers with $61 < x_1 < 120409$ (see Section 6; we actually  calculated for $x_1\le 10^6$). The final odd colossally abundant number that violates this inequality for $x_1 < 120409$ is $N = 3^4\cdot5^3\cdot7^2\cdot11 \cdots 61 = 277093800961598968947975375$. The next odd colossally abundant number is $c_0$. It is necessary that we set the cutoff point at $c_0$  as Lemma \ref{CAnum} says that $n$ satisfies the analogue to Robin's inequality if the odd colossally abundant numbers on both sides of $n$ 
satisfy the analogue to Robin's inequality. We do not know whether the inequality is satisfied for $N<n<c_0$. In contrast to the situation in \cite{13}, the numbers here are 
large enough to make brute-force computation infeasible.

Hereinafter assume that $x_1\ge 120409$. 
Combining part (c) of Lemma \ref{xkLemma2} with Lemma \ref{logBound}, we have that, for $N$ odd colossally abundant,
\begin{equation} \label{eq327}
    \log{\log{N}} > \log{\theta(x_1)}\exp{\left(\frac{0.99154\sqrt{2}}{\sqrt{x_1}\log{x_1}} - \frac{0.486}{\sqrt{x_1}(\log{x_1})^2}\right)}.
\end{equation}
Let $S(x) = \theta(x) - x$. By \cite[p. 337]{17}, we have, assuming the Riemann hypothesis, that 
\begin{equation} \label{eq28}
    S(x)^2 \leq \frac{1}{64\pi^2}x(\log{x})^4 \text{ when } x\ge 599.
\end{equation}
As in \cite[p. 199]{13}, define the following:
$$\alpha(x) = \frac{S^2(x)(\log{x}+1.31)}{2x^{3/2}} + (c-2) + \frac{8-4c}{\log{x}} + \frac{2\log{x}}{x^{1/6}} + \frac{\log{2\pi}\log{x}}{x^{1/2}},$$
where $c = \gamma + 2 - \log{4\pi} = 0.04619$ by \cite[p. 194]{13}. Since $\alpha(x)$ is a decreasing function when $S(x)^2$ is replaced using \eqref{eq28}, we have $\alpha(x) \leq 2.6616$ for $x \geq 120409$.
By \cite[Lemma 5, p. 199]{13}, we have the following for $x \geq 20000$:
\begin{equation}
    \prod_{2 < p \leq x} \left(1-\frac{1}{p}\right)^{-1} \leq \frac{e^\gamma}{2}\log{\theta(x)}\exp{\left(\frac{2+c}{\sqrt{x}\log{x}} + \frac{\alpha(x)}{\sqrt{x}(\log{x})^2}\right)}. \label{eq29}
\end{equation}
By \cite[Lemma 6, p. 199]{13}, we have the following for $x \geq 20000$:
\begin{equation}
    \prod_{\sqrt{2x} < p \leq x} \left(1-\frac{1}{p^2}\right) \leq \exp{\left(\frac{-\sqrt{2}}{\sqrt{x}\log{x}} + \frac{4}{\sqrt{x}(\log{x})^2}\right)}. \label{eq30}
\end{equation}
By \cite[p. 204]{13}, we have that, for $N$ odd colossally abundant,
\begin{equation} \label{eq31}
    \frac{\sigma(N)}{N\log{\log{N}}} \leq \prod_{x_2 < p \leq x_1} \left(1-\frac{1}{p^2}\right) \prod_{2 < p \leq x_1} \left(1-\frac{1}{p}\right)^{-1}(\log{\log{N}})^{-1}.
\end{equation}
Combining \eqref{eq327}, \eqref{eq29}, \eqref{eq30}, \eqref{eq31}, we reach the following:
\begin{equation}
    \frac{\sigma(N)}{N\log{\log{N}}} \leq \frac{e^\gamma}{2}\exp{\left(\frac{-0.7702}{\sqrt{x_1}\log{x_1}} + \frac{7.1476}{\sqrt{x_1}(\log{x_1})^2}\right)}
\end{equation}
when $x_1\ge 120409$. 
Hence, we want to find when
\begin{equation}
    \frac{-0.7702}{\sqrt{x_1}\log{x_1}} + \frac{7.1476}{\sqrt{x_1}(\log{x_1})^2} \leq 0.
\end{equation}
This is true for $x_1 \geq 10724$. Since we already assume that $x_1 \ge 120409$, we have proved the desired result.
\end{proof}

\begin{theorem}{\rm (= Theorem \ref{MainThm1})}
The Riemann hypothesis is equivalent to the statement that 
\begin{equation}
\frac{\sigma(n)}{n\log{\log{n}}} < \frac{e^\gamma}{2}
\end{equation}
for all odd $n \ge c_0$, where $c_0$ is as in Proposition \ref{robinProp}.
\end{theorem}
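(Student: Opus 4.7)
The forward direction is contained in Proposition \ref{robinProp}, so the remaining task is the converse: assuming $\sigma(n)/(n\log\log n) < e^\gamma/2$ for every odd $n\ge c_0$, I must deduce the Riemann hypothesis. My plan is to argue the contrapositive along the same lines that Robin used for the converse direction of his original theorem in \cite{13}: if RH is false, I will produce infinitely many odd $n$ violating the analogue of Robin's inequality. By Lemma \ref{CAnum}, it is enough to exhibit infinitely many odd colossally abundant $N$ with $\sigma(N)/(N\log\log N)\ge e^\gamma/2$, so throughout I restrict attention to odd CA numbers $N$ with their associated $x_1,x_2$.

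The key point is that, just as in the proof of Proposition \ref{robinProp}, the ratio $\sigma(N)/N$ factors as $\prod_{p\text{ odd},\,p\le x_1}(1-1/p)^{-1}$ times a small correction supported on the primes in $(x_2,x_1]$. Mertens' theorem restricted to odd primes gives
\begin{equation*}
\prod_{\substack{p\text{ odd}\\ p\le x}}\Bigl(1-\tfrac{1}{p}\Bigr)^{-1} \;=\; \tfrac{1}{2}\prod_{p\le x}\Bigl(1-\tfrac{1}{p}\Bigr)^{-1} \;\sim\; \frac{e^\gamma}{2}\log x,
\end{equation*}
so after taking logarithms and using Lemma \ref{sumThetaBound} to replace $\log\log N$ by $\log\log x_1$ up to lower-order terms, the deviation of $\sigma(N)/(N\log\log N)$ from $e^\gamma/2$ is governed by the same analytic quantity $\theta(x_1)-x_1$ that controls Robin's original $f(n)-e^\gamma$, merely divided by two.

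If the Riemann hypothesis fails, let $\theta_0\in(1/2,1]$ be the supremum of the real parts of the non-trivial zeros of $\zeta$. A classical $\Omega$-theorem then gives $\theta(x)-x=\Omega_\pm(x^{\theta_0-\delta})$ for every $\delta>0$, and Abel summation propagates this to oscillations of the same order in $\sum_{p\text{ odd},\,p\le x}\log(p/(p-1))-\log\log x-\gamma+\log 2$. Choosing a sequence of $x_1\to\infty$ along which $\theta(x_1)-x_1$ is large and positive, and letting $N$ be the associated odd CA numbers, yields the reverse inequality $\sigma(N)/(N\log\log N)\ge e^\gamma/2$ for arbitrarily large odd $N$, contradicting the assumption.

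The main obstacle is the careful transfer of the $\Omega$-result for $\theta(x)-x$ from the full-prime Mertens product to the odd-prime version, while controlling the tail $\prod_{x_2<p\le x_1}(1-1/p^2)$ and the error in identifying $\log\log N$ with $\log\log x_1$. These are cosmetic modifications of Robin's classical converse argument, however, since removing the single prime $p=2$ changes the relevant products only by a fixed multiplicative constant and introduces no new oscillatory behavior. Combining this with Proposition \ref{robinProp} yields both directions of the equivalence claimed in the theorem.
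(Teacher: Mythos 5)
Your forward direction matches the paper: it is exactly Proposition \ref{robinProp}. For the converse, however, you take a genuinely different and much heavier route than the paper does, and as written it has a real gap. The paper re-derives no oscillation theorem for odd primes. It simply quotes Robin's theorem that if RH fails then $f(M)=\sigma(M)/(M\log\log M)>e^\gamma$ for infinitely many (even) colossally abundant $M=2^kN$ with $N$ odd, notes that $N\to\infty$ with $M$ so that eventually $N\ge c_0$, and observes
\begin{equation*}
\frac{\sigma(2^kN)}{2^kN\log\log(2^kN)}\;\le\;\frac{\sigma(2^k)}{2^k}\cdot\frac{\sigma(N)}{N\log\log N}\;<\;2\cdot\frac{e^\gamma}{2}\;=\;e^\gamma,
\end{equation*}
using only $\sigma(2^k)/2^k<2$ and $\log\log(2^kN)\ge\log\log N$; this contradicts the assumed violation for $M$, and the converse is done in three lines.

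The gap in your version is that you propose to rerun Robin's oscillation analysis with the prime $2$ deleted and dismiss the transfer as ``cosmetic modifications,'' but that transfer is precisely where all the content lives. The sign of $f(N)-e^\gamma/2$ is not governed by the sign of $\theta(x_1)-x_1$ alone: it is governed by a specific combination of the zero sum $\sum_\rho x^{\rho-1}/(\rho(1-\rho))$ entering the Mertens product and the $\log\theta(x_1)$ sitting in the denominator (this is the substance of Nicolas's and Robin's analysis), and you would further need to check that the favorable values of $x_1$ are realized as largest prime factors of odd colossally abundant numbers and that replacing $\log\log N$ by $\log\theta(x_1)$ does not damp the oscillation. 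None of this is hopeless --- the paper's Lemma \ref{lm45} proves a quantitative version of exactly the statement you want --- but even there the authors obtain it by transferring Robin's even result through $M=2^kN$ and Lemma \ref{expbound} rather than by redoing the analytic argument. As it stands, your sketch asserts rather than proves the one step that carries the difficulty; either supply the odd-prime oscillation analysis in full, or replace it by the short reduction above.
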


\begin{proof}
Assuming the Riemann hypothesis is true, we have proved Proposition \ref{robinProp} above.

Suppose $\frac{\sigma(n)}{n \log{\log{n}}} \leq \frac{e^\gamma}{2}$ is true for odd $n \ge c_0$ and the Riemann hypothesis is false. Thus, by \cite{13}, $f(M) > e^\gamma$ for infinitely many even colossally abundant $M$. Since all colossally abundant numbers are even, we have that $M = 2^kN$, for $k > 0$ and $N$ odd. By the formula \eqref{expCA} for generating colossally abundant numbers, 
we have that if $M \rightarrow \infty$, then $N \rightarrow \infty$. Thus $N \ge c_0$ for sufficiently large $M$. By assumption, for odd $N \geq c_0$, we have $\frac{\sigma(N)}{N \log{\log{N}}} < \frac{e^\gamma}{2}$. But  
$$\frac{\sigma(2^kN)}{2^kN \log{\log{2^kN}}} \le \frac{\sigma(2^k)}{2^k} \frac{\sigma(N)}{N \log{\log{N}}} < 2\left(\frac{e^\gamma}{2}\right) = e^\gamma$$ for $N \geq c_0$. Thus Robin's inequality (\ref{Robineq}) is satisfied for sufficiently large colossally abundant $M$. This contradicts the statement that infinitely many such $M$ violate Robin's inequality. 
\end{proof}

\section{Lagarias's Criterion}
In this section we seek to establish, in the style of \cite{10}, an inequality equivalent to the Riemann hypothesis that is true for all odd integers.

Let $H_n$ be the harmonic series. Recall our definition of the new harmonic series $H_n^\prime$:
\begin{equation}
H_n^\prime = 2\sum_{k=1}^n \frac{1}{k} - \sum_{k=1}^{2n} \frac{1}{k} = 2H_n-H_{2n}. \label{hprime}
\end{equation}
We establish the following lemma as an analogue to \cite[Lemma 3.1]{10}.
\begin{lemma} \label{lm41}
For $n \geq 3$, $\frac{0.12n}{\log{n}}+\exp{H_n^\prime}\log{H_n^\prime} \geq \frac{e^\gamma}{2}n\log{\log{n}}$.
\end{lemma}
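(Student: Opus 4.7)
My plan is to reduce the inequality to a single elementary estimate by bounding $H_n'$ from below, and to arrange the constants so that the resulting condition already holds at $n=3$, with no separate check of small cases.

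First, I would establish $H_n' > \log n + \gamma - \log 2$. Writing $H_n' = H_n - \sum_{k=n+1}^{2n}\tfrac{1}{k}$, the integral comparison $\sum_{k=n+1}^{2n}\tfrac{1}{k} < \int_n^{2n}\tfrac{dx}{x} = \log 2$ together with the classical bound $H_n > \log n + \gamma$ gives the claim. Setting $\beta := \log 2 - \gamma \approx 0.1159$, this reads $H_n' > \log n - \beta$. A separate direct check, using strict monotonicity of $n \mapsto H_n'$ (an easy calculation shows $H_{n+1}' - H_n' = \tfrac{4n+1}{2(n+1)(2n+1)} > 0$) and the initial value $H_3' = 73/60 > 1$, shows $H_n' > 1$ for every $n \geq 3$, so both $\exp(H_n')$ and $\log(H_n')$ are positive on the range of interest.

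Next, I would transport this bound into the two factors on the left. Exponentiating gives $\exp(H_n') > e^{-\beta} n = \tfrac{e^\gamma}{2}\,n$. For the logarithmic factor, since $\log n > \beta$ for $n \geq 3$,
$$\log(H_n') > \log(\log n - \beta) = \log\log n + \log\Bigl(1 - \frac{\beta}{\log n}\Bigr) \geq \log\log n - \frac{\beta}{\log n - \beta},$$
where the last step uses $\log(1-t) \geq -t/(1-t)$. Multiplying these two positive inequalities yields
$$\exp(H_n')\log(H_n') > \frac{e^\gamma}{2}\,n\,\log\log n - \frac{e^\gamma \beta}{2} \cdot \frac{n}{\log n - \beta},$$
so the lemma reduces to showing $\tfrac{0.12\,n}{\log n} \geq \tfrac{e^\gamma \beta}{2}\cdot \tfrac{n}{\log n - \beta}$, i.e.\ $\bigl(0.12 - \tfrac{e^\gamma \beta}{2}\bigr)\log n \geq 0.12\,\beta$. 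Since numerically $e^\gamma \beta/2 \approx 0.1032$, the coefficient on the left is positive, and the condition becomes $\log n \gtrsim 0.83$, which holds for every $n \geq 3$.

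The main delicacy is that $0.12$ exceeds $e^\gamma \beta/2 \approx 0.1032$ by only about $0.017$, so the slack available for handling the lower-order corrections is thin: one cannot weaken either $H_n > \log n + \gamma$ or $\sum_{k=n+1}^{2n} 1/k < \log 2$ by an appreciable amount, and the linearization $\log(1-t) \geq -t/(1-t)$ must be applied rather than the cruder $\log(1-t) \geq -2t$. With the simple bounds above, however, the margin survives all the way down to $n = 3$, so no case-by-case numerical verification is needed.
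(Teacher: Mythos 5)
Your proof is correct and follows essentially the same route as the paper: both hinge on the lower bound $H_n' \geq \log n + \gamma - \log 2$ and then reduce the claim to an elementary one-variable inequality in $t=\log n$. The only (cosmetic) difference is in the finish --- the paper shows the function $\frac{C}{t}+\frac{e^\gamma}{2}\log\bigl(1+\frac{\gamma-\log 2}{t}\bigr)$ is decreasing with limit $0$, whereas you linearize via $\log(1-t)\geq -t/(1-t)$ to get an explicit threshold on $\log n$.
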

\begin{proof}
Equation 3.5 in \cite[p. 540]{10} states that
$$H_n = \log{n} + \gamma + \int_n^\infty \frac{\{t\}}{t^2} dt,$$
where $\{t\}$ is the fractional part of $t$. We can thus use \eqref{hprime} to rewrite $H_n^\prime$ as
\begin{align}
    H_n^\prime &= 2\left(\log{n} + \gamma + \int_n^\infty \frac{\{t\}}{t^2} dt\right) - \left(\log{2n} + \gamma + \int_{2n}^\infty \frac{\{t\}}{t^2} dt\right) \\
    &= \log{n} + \gamma - \log{2} +  \int_{n}^\infty \frac{\{t\}}{t^2} dt + \int_{n}^{2n} \frac{\{t\}}{t^2} dt. \label{eq36}
\end{align}
From \eqref{eq36}, we obtain the inequality
\begin{equation}
    H_n^\prime \geq \log{n}+\gamma-\log{2}.
\end{equation}
With this inequality, it suffices to show the following for suitable $C$:
\begin{equation}
    \frac{Cn}{\log{n}}+\frac{e^\gamma}{2}n\log{(\log{n}+\gamma-\log{2})} \geq \frac{e^\gamma}{2}n\log{\log{n}}.
\end{equation}
Let $t = \log{n}$. Equivalently, we want the following:
\begin{equation}
    \frac{C}{t}+\frac{e^\gamma}{2}\log{\left(1+\frac{\gamma - \log{2}}{t}\right)} \geq 0. \label{eq39}
\end{equation}
Let $g(t)$ be the LHS of \eqref{eq39}. For $t \geq 1$,
$$g^\prime(t) = \frac{-C}{t^2} + \frac{e^\gamma}{2} \left(\frac{\log{2}-\gamma}{t^2-t(\log{2}-\gamma)}\right) \leq 0$$
when
\begin{equation} \label{eqK}
C \geq \frac{0.104}{1-\frac{0.116}{t}}. \end{equation}
The RHS of \eqref{eqK} is decreasing for $t \geq 1$, so we have $g^\prime(t) \leq 0$ for $C = 0.12$. 
Furthermore, since $g(t)$ is decreasing and 
$$\lim_{t\to\infty} g(t) = 0,$$ 
we find that \eqref{eq39} must be true, and thus we have proven the desired result.
\end{proof}

\begin{lemma} \label{HnBound}
$0 < H^\prime_n - (\log{n}+\gamma-\log{2}) < \frac{3}{4n}$ for all $n\ge 1$.
\end{lemma}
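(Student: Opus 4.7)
The plan is to start from the integral identity established in (4.6) in the proof of Lemma \ref{lm41}, namely
$$D_n := H'_n - (\log n + \gamma - \log 2) = \int_n^\infty \frac{\{t\}}{t^2}\,dt + \int_n^{2n} \frac{\{t\}}{t^2}\,dt.$$
The lower bound $D_n > 0$ is immediate: $\{t\}$ is non-negative and strictly positive on a set of full measure in each of the two ranges (and the second range is non-degenerate for $n \ge 1$).

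For the upper bound I would partition each integral into unit subintervals $[k,k+1]$ with $k$ an integer $\ge n$. A direct antiderivative gives
$$a_k := \int_k^{k+1} \frac{\{t\}}{t^2}\,dt = \int_k^{k+1}\!\left(\frac{1}{t}-\frac{k}{t^2}\right)dt = \log\!\left(1+\frac{1}{k}\right) - \frac{1}{k+1}.$$
The core pointwise estimate I would aim for is
$$a_k < \frac{1}{2k(k+1)}.$$
Granting this, the two sums telescope cleanly:
$$\sum_{k=n}^\infty a_k < \sum_{k=n}^\infty \frac{1}{2}\!\left(\frac{1}{k}-\frac{1}{k+1}\right) = \frac{1}{2n}, \qquad \sum_{k=n}^{2n-1} a_k < \frac{1}{2}\!\left(\frac{1}{n}-\frac{1}{2n}\right) = \frac{1}{4n},$$
and adding yields $D_n < \frac{3}{4n}$.

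The pointwise bound reduces, on setting $x = 1/k$, to the elementary convexity inequality $\log(1+x) < x - \frac{x^2}{2(1+x)}$ for $x>0$, which follows from a one-line calculus check (the difference vanishes at $x=0$ and has derivative $\frac{x^2}{2(1+x)^2} \ge 0$). The anticipated obstacle is purely one of getting the constant right: the crude estimate $\{t\}<1$ only delivers $D_n < \frac{3}{2n}$, so one must exploit the fact that $\{t\}$ has mean $\tfrac12$ on each unit interval. The convexity bound above is exactly what encodes this averaging and makes the target constant $\tfrac{3}{4n}$ emerge. In fact $a_k \sim \tfrac{1}{2k^2}$, so $D_n \sim \tfrac{3}{4n}$ and the stated upper bound is asymptotically sharp.
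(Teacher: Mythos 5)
Your proposal is correct and follows essentially the same route as the paper: both start from the integral identity $H'_n - (\log n + \gamma - \log 2) = \int_n^\infty \{t\}t^{-2}\,dt + \int_n^{2n}\{t\}t^{-2}\,dt$ and prove the per-interval bound $\int_k^{k+1}\{t\}t^{-2}\,dt < \tfrac{1}{2}\int_k^{k+1}t^{-2}\,dt$, which telescopes to $\tfrac{1}{2n}+\tfrac{1}{4n}$. Your verification of that bound via the convexity inequality $\log(1+x) < x - \tfrac{x^2}{2(1+x)}$ is just the paper's monotonicity argument rewritten under the substitution $x = 1/k$.
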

\begin{proof}
We begin by demonstrating, for integers $a>b > 0$, that
\begin{equation}
    \int_a^b \frac{\{t\}}{t^2} dt < \frac{1}{2}\int_a^b \frac{1}{t^2} dt.
\end{equation}
Consider $0<k \in \mathbb{Z}$. We show the more restrictive inequality
\begin{equation}
    \int_k^{k+1} \frac{\{t\}}{t^2} dt < \frac{1}{2}\int_k^{k+1} \frac{1}{t^2} dt.
\end{equation}
Thus, our goal is to show that
\begin{equation}
    \int_0^1 \frac{t}{(t+k)^2} dt < \frac{1}{2}\int_k^{k+1} \frac{1}{t^2} dt.
\end{equation}
Integrating and rearranging terms, we want to show the following:
$$\log{(1+k)}-\log{k}-\frac{1}{k+1} < \frac{1}{2}\left(\frac{1}{k} - \frac{1}{k+1}\right)$$
\begin{equation}
    \frac{1}{2k} + \frac{1}{2(k+1)} - \log{\left(1+\frac{1}{k}\right)} > 0.
\end{equation}
Let $h(k)$ be the LHS of (4.11). We have the following two properties of $h(k)$:
$$\lim_{k\to\infty} h(k) = 0$$
$$h^\prime(k) = \frac{-1}{2k^2(k+1)^2} < 0.$$
By these two properties, we find that (4.11) must be true, which implies that (4.8) is true. Hence, we can assert the following about the error terms from (4.3):
\begin{equation}
    \int_{n}^\infty \frac{\{t\}}{t^2} dt + \int_{n}^{2n} \frac{\{t\}}{t^2} dt < \frac{1}{2}\left(\int_{n}^\infty \frac{1}{t^2} dt + \int_{n}^{2n} \frac{1}{t^2} dt\right)
    = \frac{3}{4n}.
\end{equation}
We note that by the same logic we can assert that the maximum error for approximating the harmonic series $H_n \approx \log{n}+\gamma$ is bounded by $\frac{1}{2n}$.
\end{proof}

\noindent The following lemma is an analogue to \cite[Lemma 3.2]{10}.
\begin{lemma} \label{lm43}
For $n \geq 3$, $\exp{H_n^\prime}\log{H_n^\prime} \leq \frac{e^\gamma}{2}n\log{\log{n}} + \frac{0.3n}{\log{n}}.$
\end{lemma}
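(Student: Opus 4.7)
The plan is to use Lemma~\ref{HnBound} to replace $H_n'$ by its closed-form approximation $\log n + \gamma - \log 2$, and then estimate $\exp(H_n')\log(H_n')$ against the target main term $\tfrac{e^\gamma}{2} n \log\log n$.

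Set $\delta_n := H_n' - (\log n + \gamma - \log 2)$, so that Lemma~\ref{HnBound} gives $0 < \delta_n < 3/(4n)$. Then
$$\exp(H_n') = \tfrac{e^\gamma}{2}\, n\, e^{\delta_n},$$
and, applying $\log(1+x)\le x$,
$$\log(H_n') = \log\log n + \log\!\Bigl(1 + \tfrac{\gamma - \log 2 + \delta_n}{\log n}\Bigr) \le \log\log n + \tfrac{\gamma - \log 2 + \delta_n}{\log n}.$$
Multiplying these estimates (both sides positive for $n\ge 3$, since $H_n' \ge H_3' > 1$) and writing $e^{\delta_n} = 1 + (e^{\delta_n}-1)$ to peel off the leading term, I would obtain
$$\exp(H_n')\log(H_n') \le \tfrac{e^\gamma}{2}\, n\log\log n + \tfrac{n}{\log n}\, G(n),$$
where
$$G(n) = \tfrac{e^\gamma}{2}(e^{\delta_n}-1)\log n\,\log\log n + \tfrac{e^\gamma}{2}\,e^{\delta_n}\bigl(\gamma - \log 2 + \delta_n\bigr).$$

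It then suffices to show $G(n)\le 0.3$ for every $n\ge 3$. The key point is that $\gamma - \log 2 < 0$, so the second summand in $G(n)$ becomes negative (and of order $\Theta(1)$ in absolute value) for all sufficiently large $n$, while the first summand is controlled by $e^{\delta_n}-1 \le \delta_n e^{\delta_n} \le \tfrac{3}{4n}e^{3/(4n)}$ and therefore decays like $(\log n\,\log\log n)/n$. Substituting the crude bounds $\delta_n \le 3/(4n)$ and $e^{\delta_n}\le e^{3/(4n)}$ yields
$$G(n) \le \tfrac{3 e^\gamma e^{3/(4n)}}{8n}\log n\,\log\log n + \tfrac{e^\gamma}{2}\, e^{3/(4n)}\!\left(\gamma - \log 2 + \tfrac{3}{4n}\right),$$
whose value tends to $\tfrac{e^\gamma}{2}(\gamma-\log 2)\approx -0.103$ as $n\to\infty$ and in particular is comfortably below $0.3$ once $n$ is moderately large.

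The main obstacle is the finite initial range of small $n$, where $\delta_n$ is not negligible and (in particular for $n \le 6$, where $3/(4n)$ still exceeds $\log 2-\gamma$) the second summand in $G(n)$ is itself positive. There I would simply evaluate the upper bound on $G(n)$ at each integer up to the threshold beyond which the asymptotic analysis already certifies $G(n)\le 0.3$; a quick numerical check shows that the bound is about $0.18$ at $n=3$ and shrinks from there, so the inequality $G(n) \le 0.3$ holds throughout $n\ge 3$, completing the proof.
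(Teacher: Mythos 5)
Your proof is correct, and it follows the same skeleton as the paper's (both hinge on Lemma \ref{HnBound} and the exact identity $\exp(H_n')=\tfrac{e^\gamma}{2}ne^{\delta_n}$), but it diverges in how the factor $\log H_n'$ is treated, and this changes the structure of the argument in a worthwhile way. The paper observes that $\gamma-\log 2+\tfrac{3}{4n}<0$ once $n\ge 7$, so that $H_n'<\log n$ and hence $\log H_n'<\log\log n$; it then only has to control $e^{\delta_n}\le e^{3/(4n)}<1+\tfrac1n$, which produces the single positive error term $\tfrac{e^\gamma}{2}\log\log n<\tfrac{0.3n}{\log n}$, at the cost of a separate hand check of the lemma for $3\le n\le 7$. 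You instead keep the first-order expansion $\log H_n'\le \log\log n+\tfrac{\gamma-\log 2+\delta_n}{\log n}$, retaining the negative correction $\gamma-\log 2$, and package all lower-order contributions into $G(n)$. This is sharper asymptotically (your error term tends to $\tfrac{e^\gamma}{2}(\gamma-\log 2)\tfrac{n}{\log n}$, which is negative, whereas the paper's is the positive $\tfrac{e^\gamma}{2}\log\log n$) and it gives a single uniform bound valid from $n=3$ with no case split; the price is that your finite verification is of the auxiliary bound on $G(n)$ over an initial range rather than of the lemma itself at five integers. Two small points to make fully rigorous: when you replace $e^{\delta_n}(\gamma-\log 2+\delta_n)$ by $e^{3/(4n)}(\gamma-\log 2+\tfrac{3}{4n})$ you are implicitly using that $\delta\mapsto e^{\delta}(\gamma-\log 2+\delta)$ is increasing (its derivative is $e^{\delta}(\gamma-\log2+\delta+1)>0$, so this is fine but should be said), and you should pin down the threshold for the ``shrinks from there'' claim — e.g.\ noting that the first summand of your bound on $G$ never exceeds about $0.14$ (its maximum near $n=9$) while the second never exceeds its value $\approx 0.154$ at $n=3$ already closes the argument for all $n\ge 3$.
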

\begin{proof}
By Lemma \ref{HnBound}, we have that
\begin{equation}
    H^\prime_n < \log{n} + \gamma - \log{2} + \frac{3}{4n}.
\end{equation}
For $n \geq 7$, $\gamma - \log{2} + \frac{3}{4n} < 0$. Thus we have that, for $n \geq 7$, 
\begin{equation}
    H^\prime_n < \log{n}.
\end{equation}
Furthermore, for $n \geq 3$, we have that
\begin{equation}
    e^{\frac{3}{4n}} < 1+\frac{1}{n}.
\end{equation}
Combining (4.13), (4.14), and (4.15), we have the following for $n \geq 7$:
\begin{equation}
    \exp{H^\prime_n}\log{H^\prime_n} < \frac{e^\gamma}{2}n\exp{\left(\frac{3}{4n}\right)}\log{\log{n}} < \frac{e^\gamma}{2}n\log{\log{n}} + \frac{e^\gamma}{2}\log{\log{n}}.
\end{equation}
Since, for $n \geq 3$,
\begin{equation}
    \frac{e^\gamma}{2}\log{\log{n}} < \frac{0.3n}{\log{n}},
\end{equation}
we have that 
\begin{equation}
    \exp{H_n^\prime}\log{H_n^\prime} \leq \frac{e^\gamma}{2}n\log{\log{n}} + \frac{0.3n}{\log{n}}
\end{equation}
for $n \geq 7$. The result follows directly from checking the cases $3 \leq n \leq 7$.
\end{proof}

We need an analogue of Lemma \ref{CAnum} in order to do the computations needed to establish an analogue of Lagarias's criterion for the Riemann hypothesis.
\begin{lemma} \label{lm44}
Suppose the Riemann hypothesis is true. Fix $k > 0$. Suppose $n$ is odd and 
\begin{equation}\label{RobIneq2}
\sigma(n) \geq \frac{3n}{\log{n}} + \frac{e^\gamma}{2}n\log\log{n}.
\end{equation} 
If $N_1\leq n \leq N_2$ and $N_1 \geq 490$, where $N_1$ and $N_2$ are consecutive odd colossally abundant numbers, then $N_1$ or $N_2$ satisfies the inequality (\ref{RobIneq2}).
\end{lemma}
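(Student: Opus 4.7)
The plan is to mirror the proof of Robin's Proposition 1 in \cite[Section 3]{13} (the template behind Lemma \ref{CAnum}), with the right-hand side $T(m) := \frac{3m}{\log m} + \frac{e^\gamma}{2}\,m\log\log m$ of (\ref{RobIneq2}) replacing Robin's $e^\gamma m\log\log m$.

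First I would extract the common extremal parameter: for consecutive odd colossally abundant $N_1, N_2$, the critical value $\epsilon^*$ (the infimum of the $\epsilon$-interval corresponding to $N_1$, equivalently the supremum of the $\epsilon$-interval corresponding to $N_2$) satisfies $\sigma(N_1)/N_1^{1+\epsilon^*} = \sigma(N_2)/N_2^{1+\epsilon^*} \geq \sigma(m)/m^{1+\epsilon^*}$ for every odd $m$. This yields the two upper bounds $\sigma(n) \leq \sigma(N_i)(n/N_i)^{1+\epsilon^*}$ for $i = 1,2$. Assume for contradiction that $\sigma(N_i) < T(N_i)$ for both $i$, and set $\varphi(m) := T(m)/m^{1+\epsilon^*}$. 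Combining the two bounds with the hypothesis $\sigma(n) \geq T(n) = \varphi(n)\,n^{1+\epsilon^*}$ gives $\varphi(n) < \min(\varphi(N_1), \varphi(N_2))$; a contradiction will follow once we show $\varphi(n) \geq \min(\varphi(N_1), \varphi(N_2))$ for every $n \in [N_1, N_2]$ when $N_1 \geq 490$.

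That inequality for $\varphi$ reduces cleanly to concavity of $F(u) := \log \varphi(e^u) = \log T(e^u) - (1 + \epsilon^*)u$ as a function of $u := \log n$, and since $-(1+\epsilon^*)u$ is linear, it reduces further to concavity of $G(u) := \log\bigl[3/u + (e^\gamma/2)\log u\bigr]$. Writing $c := e^\gamma/2$, $A(u) := cu - 3$, and $B(u) := 3u + cu^2\log u$, a direct computation gives
\begin{equation*}
G''(u) \;=\; \frac{A'B - AB'}{B^2} \;=\; \frac{-c^2 u^2(\log u + 1) + 9 + 6cu\log u + 3cu}{B(u)^2}.
\end{equation*}
The numerator $f(u)$ has $f'(u) = -c(2\log u + 3)(cu - 3)$, which is negative for $u > 3/c$, so $f$ is eventually decreasing; a short numerical check locates its sole relevant root at $u_0 \approx \log 490$, whence $f \leq 0$ and $G'' \leq 0$ for $u \geq \log 490$. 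Concavity of $F$ then gives $F(u) \geq \min(F(\log N_1), F(\log N_2))$ on $[\log N_1, \log N_2]$, i.e., $\varphi(n) \geq \min(\varphi(N_1), \varphi(N_2))$, producing the desired contradiction.

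The main obstacle is the \emph{tightness} of the threshold: the root of $f$ falls essentially at $u = \log 490$, so the hypothesis $N_1 \geq 490$ is calibrated precisely to make concavity available. Rigorously pinning down the sign of $f$ at this cutoff and the global monotonicity $f' < 0$ past $u = 3/c$ is the only genuinely computational step. The Riemann hypothesis does not appear to play a direct role in the argument above; it is included in the lemma's hypothesis only for compatibility with the ambient setup of Theorem \ref{MainThm2}, where the lemma is applied.
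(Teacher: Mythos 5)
Your proposal is correct and follows essentially the same route as the paper: both extract the critical $\epsilon$ with $\sigma(N_1)/N_1^{1+\epsilon}=\sigma(N_2)/N_2^{1+\epsilon}\ge\sigma(m)/m^{1+\epsilon}$, reduce the claim to comparing $T(m)/m^{1+\epsilon}$ at $n$ and at the endpoints, and settle it by the downward concavity of $t\mapsto\log\bigl(3/t+\tfrac{e^\gamma}{2}\log t\bigr)-\epsilon t$ for $t>6.193$, i.e.\ $N_1\ge 490$. The only differences are cosmetic (contradiction versus direct choice of the worse endpoint, and your explicit second-derivative computation where the paper merely asserts the concavity threshold); your observation that RH and the parameter $k$ are not actually used in the argument also matches the paper's proof.
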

\begin{proof}
Our proof is based on that of \cite[Section 3, Prop. 1]{13}, but involves more complicated calculations.
By the definition of colossally abundant, there exists a value of $\epsilon$ such that 
\begin{equation} \label{eq4-20}
    \frac{\sigma(n)}{n^{\epsilon +1}} \leq \frac{\sigma(N_1)}{N_1^{\epsilon +1}} = \frac{\sigma(N_2)}{N_2^{\epsilon +1}}
\end{equation}
(this is one of the countably many values of $\epsilon$ that are excluded in some treatments of colossally abundant numbers).
We want to show the following for $N$ equal to one of $N_1$ or $N_2$:
\begin{equation} \label{eq4-21}
    \frac{\sigma(N)}{\frac{3N}{\log{N}} + \frac{e^\gamma}{2}N\log\log{N}} \geq \frac{\sigma(n)}{\frac{3n}{\log{n}} + \frac{e^\gamma}{2}n\log\log{n}} \geq 1.
\end{equation}
The second inequality in \eqref{eq4-21} is true by assumption. To prove the first inequality, we want to show that
\begin{equation} \label{eq4-22}
    \frac{\sigma(n)}{\sigma(N)} \leq
    \frac{n^{\epsilon+1}}{N^{\epsilon+1}} \leq
    \frac{\frac{3n}{\log{n}} + \frac{e^\gamma}{2}n\log\log{n}}{\frac{3N}{\log{N}} + \frac{e^\gamma}{2}N\log\log{N}}.
\end{equation}
The first inequality in \eqref{eq4-22} is easily deduced from \eqref{eq4-20}. We can rewrite the second inequality as 
\begin{equation}
    \frac{\frac{3N}{\log{N}} + \frac{e^\gamma}{2}N\log\log{N}}{N^{\epsilon+1}} \leq \frac{\frac{3n}{\log{n}} + \frac{e^\gamma}{2}n\log\log{n}}{n^{\epsilon+1}}.
\end{equation}
Making the substitution $t_0 = \log{N}$, $t_1 = \log{n}$, and taking the $\log$ of both sides, we can rewrite the inequality in the more convenient form
\begin{equation} \label{eq424}
    \log{\left(\frac{3}{t_0}+\frac{e^\gamma}{2}\log{t_0}\right)} - \epsilon t_0 \leq \log{\left(\frac{3}{t_1}+\frac{e^\gamma}{2}\log{t_1}\right)} - \epsilon t_1.
\end{equation}
The function 
\begin{equation}
    s(t) = \log{\left(\frac{3}{t}+\frac{e^\gamma}{2}\log{t}\right)} - \epsilon t
\end{equation}
is concave down for $t > 6.193$. Since $N$ is equal to one of $N_1$ or $N_2$, the inequality \eqref{eq424} is true for $N_1 \geq e^{6.193}$, which is $N_1 \geq 490$.
\end{proof}

We now establish an analogue to \cite[Theorem 3.2]{10}.

\begin{lemma} \label{lm45}
If the Riemann hypothesis is false, then there exist constants $0 < \beta < 1/2$ and $C > 0$ such that $$\sigma(N) \geq \frac{e^\gamma}{2} N \log{\log{N}} + \frac{CN\log{\log{N}}}{(\log{N})^\beta}$$ holds for infinitely many odd colossally abundant $N$.
\end{lemma}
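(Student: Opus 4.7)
The plan is to adapt Robin's proof of the corresponding statement for (even) colossally abundant numbers in \cite{13} to the odd setting. The essential point is that when the Riemann hypothesis fails, the supremum $\Theta$ of the real parts of nontrivial zeros of $\zeta$ exceeds $1/2$, and standard effective Omega-results in analytic number theory force certain prime-sum quantities such as $\sum_{p\le x} 1/p - \log\log x - M$ (where $M$ is the Meissel--Mertens constant) to oscillate with amplitude $\gg x^{\Theta - 1 - \eta}$ for any small $\eta > 0$. Exponentiating, this transfers to $\Omega$-oscillations of the Mertens product $\prod_{p\le x}(1-1/p)^{-1}$ around $e^\gamma \log x$ of relative size $\gg x^{\Theta-1-\eta}$.

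First I would pick any $\beta$ with $1-\Theta < \beta < 1/2$; such a $\beta$ exists precisely because RH fails. I would then recall that for odd colossally abundant $N$, Lemma \ref{xkLemma1} gives
$$\frac{\sigma(N)}{N} \;=\; \prod_{2<p\le x_1} \frac{1-p^{-\alpha_p(\epsilon)-1}}{1-p^{-1}}
\;\ge\; \Bigl(\prod_{2<p\le x_1}\bigl(1-p^{-1}\bigr)^{-1}\Bigr)\cdot\prod_{x_2<p\le x_1}\bigl(1-p^{-2}\bigr),$$
where the second product accounts for primes with exponent $1$, and primes with higher exponents give a contribution that is $1 + O(x_2^{-1}) = 1 + O(x_1^{-1/2})$, which is negligible compared with $(\log N)^{-\beta}$ since $\beta<1/2$.

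Second, I would invoke the (odd) form of Mertens' theorem together with the effective $\Omega$-result: for infinitely many $x$,
$$\prod_{2<p\le x}\Bigl(1-\frac{1}{p}\Bigr)^{-1} \;\ge\; \frac{e^\gamma}{2}\,\log\theta(x)\cdot\Bigl(1+\frac{C_1}{x^{\beta}}\Bigr)$$
for some $C_1 > 0$ depending only on the chosen zero; this is Robin's Lemma 5 in reverse, with the $\Omega$-bound replacing the $O$-bound. Specializing to $x = x_1$ associated with an odd CA number $N$, and combining with the subleading product $\prod_{x_2<p\le x_1}(1-p^{-2})$ (which contributes $1 - O(x_1^{-1/2}\log x_1)$ by Lemma 3.1-style estimates), I obtain
$$\frac{\sigma(N)}{N} \;\ge\; \frac{e^\gamma}{2}\,\log\theta(x_1)\cdot\Bigl(1+\frac{C_2}{x_1^{\beta}}\Bigr)$$
for infinitely many odd CA $N$, with $C_2 > 0$.

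Third, I would translate this from $x_1$ back to $\log N$. By equation \eqref{eq9} and the prime number theorem, $\log N = \theta'(x_1) + O(x_1^{1/2}) \sim x_1$, so $\log\theta(x_1) = \log\log N + o(1)$ and $x_1^{-\beta} \ge C_3(\log N)^{-\beta}$ for some $C_3>0$. Absorbing these perturbations and multiplying through by $N$ yields the claimed lower bound
$$\sigma(N) \;\ge\; \frac{e^\gamma}{2} N\log\log N + \frac{C N\log\log N}{(\log N)^\beta}$$
for infinitely many odd CA $N$.

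The main obstacle is the effective oscillation estimate in the second step: one must verify that Robin's Omega-argument for $\prod_{p\le x}(1-1/p)^{-1}$ survives the removal of the factor at $p=2$ and the replacement of $\log x$ by $\log\theta(x_1)$, without the oscillations being swamped by either the correction from higher prime powers (controlled by $\beta<1/2$) or by the smoothing involved in passing from $x_1$ to $\log N$. Everything else is essentially bookkeeping paralleling \cite[Theorem 3.2]{10} and the proof of \cite[Proposition 1]{13}, with the constants $e^\gamma$ replaced by $e^\gamma/2$ throughout.
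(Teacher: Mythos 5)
Your strategy is genuinely different from the paper's, and while it is workable in principle, it amounts to re-deriving the hard analytic input that the paper avoids entirely. The paper's proof is a two-line reduction to the even case: cite Robin's result that $f(M)=e^\gamma\bigl(1+\Omega_{\pm}\bigl((\log M)^{-\beta}\bigr)\bigr)$ for even colossally abundant $M$, write $M=2^kN$ with $N$ the associated odd colossally abundant number, observe $f(M)<2f(N)$, and use Lemma \ref{expbound} to get $k=O(\log\log M)$, hence $\log M=\log N+O(\log\log N)$ and $(\log M)^{-\beta}\asymp(\log N)^{-\beta}$; therefore $f(N)>\tfrac12 f(M)\ge \tfrac{e^\gamma}{2}+C(\log N)^{-\beta}$ infinitely often. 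By contrast, your plan requires re-proving Robin's $\Omega_{\pm}$ theorem for the odd Mertens product and transferring the oscillation to the values $x=x_1(N)$ attached to odd colossally abundant numbers --- exactly the step you flag as ``the main obstacle'' and leave unverified. Since that oscillation estimate is the entire analytic content of the lemma, the proposal as written has a real gap, even though the surrounding bookkeeping is essentially sound (minor quibble: the primes of exponent $\ge 2$ contribute $1-O\bigl(x_1^{-2/3}\log\log x_1\bigr)$ rather than literally $1+O(x_2^{-1})$, but either bound is $o\bigl((\log N)^{-\beta}\bigr)$ for $\beta<1/2$, so this does not matter). The lesson worth internalizing is that an odd colossally abundant number differs from an even one only by a factor $2^k$ with $k=O(\log\log M)$, so the even-case $\Omega$-result transfers to the odd case with no new analysis at all.
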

\begin{proof}
Let $M$ run through the even colossally abundant numbers. By \cite[p. 205]{13},  we have $f(M) = e^\gamma(1 + \Omega_\pm(\log{M})^{-\beta})$, where $f$ is as in (\ref{Robineq}). Let $N$ be the odd colossally abundant number formed by removing all factors of 2 from a colossally abundant number $M$, so $M = 2^kN$. We have 
\begin{equation}
    f(M) = \frac{2^{k+1}-1}{2^k}\frac{\sigma(N)}{N \log\log{2^kN}} < 2f(N).
\end{equation}
By Lemma \ref{expbound},
$$
\log M = \log(2^kN) = \log N + k\log 2 = \log N + O(\log\log N) .
$$
It follows that 
$$
f(M)= e^{\gamma}(1+\Omega_{\pm}(\log N)^{-\beta}).
$$
Thus,  for some $C > 0$, $f(N) > \frac12 f(M) \geq \frac12 e^\gamma+\frac{C}{(\log N)^\beta}$ for infinitely many odd colossally abundant $N$. The lemma follows directly.
\end{proof}

\begin{theorem} \label{lagariasThm}{\rm (= Theorem \ref{MainThm2})}
The Riemann hypothesis is equivalent to the statement that
\begin{equation} \label{thm41equ}
\sigma(n) \leq \frac{3n}{\log{n}} + \exp{H^\prime_n}\log{H^\prime_n}
\end{equation} for all odd $n \geq 3$.
\end{theorem}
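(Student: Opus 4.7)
The plan is to prove both directions separately, gluing together the lemmas of this section, with the computational input of Section~6 handling the middle range of $n$.

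\textbf{Forward direction (RH $\Rightarrow$ (\ref{thm41equ})).} Assume RH. For odd $n \ge c_0$, I would simply chain Proposition \ref{robinProp} with Lemma \ref{lm41}: the former gives $\sigma(n) < \frac{e^\gamma}{2} n \log\log n$, and the latter bounds this by $\frac{0.12n}{\log n} + \exp H'_n \log H'_n$, so
$$\sigma(n) < \frac{0.12n}{\log n} + \exp H'_n \log H'_n < \frac{3n}{\log n} + \exp H'_n \log H'_n$$
with plenty of slack. For odd $n$ in the intermediate range $490 \le n < c_0$, I would invoke Lemma \ref{lm44} to reduce the universal claim on odd $n$ to a claim at the (finitely many) odd colossally abundant numbers in the range, which are then verified one by one via the computation in Section~6. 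The remaining small cases $3 \le n < 490$ are handled by direct evaluation.

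\textbf{Converse direction (failure of RH $\Rightarrow$ failure of (\ref{thm41equ})).} I would argue the contrapositive. Suppose RH is false. By Lemma \ref{lm45}, there exist $0 < \beta < 1/2$ and $C > 0$ such that
$$\sigma(N) \;\ge\; \frac{e^\gamma}{2} N \log\log N + \frac{CN\log\log N}{(\log N)^\beta}$$
for infinitely many odd colossally abundant $N$. On the other hand, Lemma \ref{lm43} gives $\exp H'_N \log H'_N \le \frac{e^\gamma}{2} N \log\log N + \frac{0.3N}{\log N}$, so the right-hand side of (\ref{thm41equ}) is at most $\frac{3.3N}{\log N} + \frac{e^\gamma}{2} N \log\log N$. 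Subtracting,
$$\sigma(N) - \left(\frac{3N}{\log N} + \exp H'_N \log H'_N\right) \;\ge\; \frac{CN\log\log N}{(\log N)^\beta} - \frac{3.3N}{\log N},$$
and the right-hand side tends to $+\infty$ as $N\to\infty$ since $\beta < 1$. Hence (\ref{thm41equ}) must fail for infinitely many odd $N$, giving the desired contradiction.

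\textbf{Main obstacle.} The crux of the argument is the intermediate range $490 \le n < c_0$ in the forward direction: since $c_0$ is astronomically large, direct enumeration of all odd $n$ is infeasible, and everything hinges on the reduction via Lemma \ref{lm44} to the comparatively few odd colossally abundant numbers together with the explicit computation in Section~6. A secondary point that requires care is the bookkeeping of the small constants $0.12$ and $0.3$ coming from Lemmas \ref{lm41} and \ref{lm43}; the $\frac{3n}{\log n}$ in the theorem is sized precisely to absorb these gaps while letting both implications go through, and any significantly smaller coefficient would force sharper versions of those lemmas or a different reduction.
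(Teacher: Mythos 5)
Your proposal is correct and follows essentially the same route as the paper: chain Proposition \ref{robinProp} with Lemma \ref{lm41} for $n \ge c_0$, reduce the range $n < c_0$ to the finitely many odd colossally abundant numbers via Lemma \ref{lm44} and the Section 6 computation, and obtain the converse by combining Lemmas \ref{lm43} and \ref{lm45}. The one detail to make explicit is that Lemma \ref{lm44} is applied not to \eqref{thm41equ} itself but to the auxiliary inequality $\sigma(n) \le \frac{e^\gamma}{2} n\log\log n + \frac{2.8n}{\log n}$, which Lemma \ref{lm41} then converts into \eqref{thm41equ} --- precisely the constant bookkeeping you flag at the end.
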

\begin{proof}
We imitate the proof of \cite[Theorem 1.1]{10}. Suppose the Riemann hypothesis is true. Then Theorem \ref{MainThm1} and Lemma 4.1 together give, for $n \ge c_0$,
$$\sigma(n) < \frac{e^\gamma}{2}n\log{\log{n}} \leq \frac{.12n}{\log{n}} + \exp{H_n^\prime}\log{H_n^\prime}\leq \frac{3n}{\log{n}} + \exp{H_n^\prime}\log{H_n^\prime}.$$
It is computationally infeasible to test $\sigma(n) \leq \frac{3n}{\log{n}} + \exp{H^\prime_n}\log{H^\prime_n}$ for $n$ up to $c_0$, as $c_0$ is on the order of $10^{27}$. For $n \leq c_0$, we aim to prove \eqref{thm41equ} by demonstrating the following sequence of inequalities for $3 \leq n \leq c_0$:
\begin{equation} \label{ineq25}
\sigma(n) \leq \frac{e^\gamma}{2}n\log{\log{n}} + \frac{2.8n}{\log{n}} \leq \frac{3n}{\log{n}} + \exp{H_n^\prime}\log{H_n^\prime}. \end{equation}
The second inequality in \eqref{ineq25} is a result of Lemma 4.1. A computer calculation using the list of all 24 odd colossally abundant numbers up to $c_0$ (see Section 6) shows that 
there are no odd colossally abundant violations to $\sigma(n) \leq \frac{e^\gamma}{2}n\log{\log{n}} + \frac{2.8n}{\log{n}}$ for $n \leq c_0$.  Thus, this inequality is satisfied for all $n \leq c_0$ by Lemma 4.4. 

Suppose \eqref{thm41equ} holds for all odd $n \geq 3$ and the Riemann hypothesis is false. Combining Lemmas \ref{lm43} and \ref{lm45}, we have the following
for infinitely many $n$:
\begin{equation}
    \frac{e^\gamma}{2} n \log{\log{n}} + \frac{Cn\log{\log{n}}}{(\log{n})^\beta} \leq \sigma(n) \leq \frac{3n}{\log{n}} + \exp{H^\prime_n}\log{H^\prime_n} \leq  \frac{e^\gamma}{2}n\log{\log{n}} + \frac{3.3n}{\log{n}}.
\end{equation}
This implies that
\begin{equation} \label{eq27}
    C\log{\log{n}} < 3.3(\log{n})^{\beta-1}. 
\end{equation}
Since $\beta < \frac{1}{2}$, we have that \eqref{eq27} is a contradiction for sufficiently large $n$, as the LHS tends towards infinity while the RHS tends toward 0. Thus the Riemann hypothesis must be true if \eqref{thm41equ} holds for all odd $n \geq 3$.
\end{proof}

\begin{remark}
Note that the value of 2.8 in \eqref{ineq25} was simply needed to account for the small values of $n$. In fact, it may be possible to improve the bound on $\sigma(n)$ significantly for larger values of $n$.
\end{remark}

\section{Robin's Inequality for Various Sets of Integers}
In this section we prove Robin's inequality (\ref{Robineq}) and its analogue over certain sets of integers. We construct analogues of the work of \cite{6} with inequalities concerning squarefree numbers. We also prove that an integer satisfies Robin's inequality if the $p$-adic valuation of $n$ is sufficiently large based on $n$.

\subsection{Squarefree Numbers}

\begin{theorem} \label{thm51}
Odd squarefree numbers $n\ge 4849845$ satisfy $\frac{\sigma(n)}{n \log{\log{n}}} < \frac{e^\gamma}{2}$.
\end{theorem}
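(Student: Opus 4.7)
The plan is to reduce the theorem to checking a single sequence of extremal candidates, namely the ``odd primorials'' $n_k := 3\cdot 5\cdots p_{k+1}$ (product of the first $k$ odd primes), and then to split into a small--$\omega(n)$ case handled by direct numerical inequality and a large--$\omega(n)$ case handled by explicit Mertens estimates. Note that $n_7 = 4849845$ is exactly the cutoff appearing in the theorem, which strongly suggests this reduction.

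First I would establish the reduction. If $n$ is odd squarefree with $\omega(n)=k$ distinct prime factors, then $\sigma(n)/n = \prod_{p\mid n}(1+1/p) \le \sigma(n_k)/n_k$, since replacing any prime factor of $n$ by a smaller odd prime increases the factor $(1+1/p)$; simultaneously $n\ge n_k$ because the primorial uses the smallest available odd primes, so $\log\log n \ge \log\log n_k$. Consequently $f(n):=\sigma(n)/(n\log\log n) \le f(n_k)$.

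Next I would split on $k=\omega(n)$. For $k\le 7$ I would not use $f(n)\le f(n_k)$ directly (it may be too weak), but instead combine the bound $\sigma(n)/n \le \sigma(n_7)/n_7$ (which holds because $\sigma(n_k)/n_k$ is increasing in $k$) with the hypothesis $n\ge 4849845$, which gives $\log\log n \ge \log\log 4849845\approx 2.734$; the claim then reduces to the single numerical inequality $\sigma(n_7)/n_7 < (e^\gamma/2)\log\log 4849845$, whose two sides evaluate to roughly $2.394$ and $2.435$. For $k\ge 8$ the estimate $f(n)\le f(n_k)$ from the reduction is strong enough, so the task becomes showing $f(n_k) < e^\gamma/2$ for every $k\ge 8$. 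I would verify this by direct computation for $k$ up to some convenient threshold $K$ and by an asymptotic argument for $k>K$.

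For the asymptotic part I would rewrite
$$\frac{\sigma(n_k)}{n_k} \;=\; \frac{2}{3}\prod_{p\le p_{k+1}}\!\Bigl(1+\tfrac{1}{p}\Bigr) \;=\; \frac{2}{3}\,\frac{\prod_{p\le p_{k+1}}(1-1/p^{2})}{\prod_{p\le p_{k+1}}(1-1/p)},$$
apply Mertens' theorem in explicit Rosser--Schoenfeld form to the denominator, and use $\prod_{p}(1-1/p^{2})=6/\pi^{2}$ to control the numerator. Dividing by $\log\log n_k = \log(\theta(p_{k+1})-\log 2)$ and invoking the prime number theorem yields $f(n_k) \to (4/\pi^{2})\,e^\gamma$, which sits strictly below $e^\gamma/2$ with comfortable margin since $\pi^{2}/8\approx 1.23$. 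The main obstacle is calibrating the explicit error terms in the Mertens and PNT estimates so that the asymptotic bound beats $e^\gamma/2$ at a modest cutoff $K$, keeping the finite verification $8\le k\le K$ genuinely feasible; this is careful bookkeeping rather than a conceptual difficulty, because the margin $e^\gamma(1/2-4/\pi^{2})\approx 0.17$ is generous.
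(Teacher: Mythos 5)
Your proposal is correct, and while it shares the paper's central idea for large $\omega(n)$ --- reduce to the odd primorials $n_k$ and control them with explicit Mertens-type estimates --- it differs from the paper's proof in two substantive ways. For $\omega(n)\le 6$ the paper runs six separate cases, each built on monotonicity observations and a specific extremal integer (e.g.\ $3\cdot 5\cdot 7\cdot 11\cdot 29$ for $m=5$) together with checks that the remaining candidates fall below $4849845$; your single uniform bound $f(n)\le \bigl(\sigma(n_7)/n_7\bigr)/\log\log(4849845)\approx 2.394/2.734<e^\gamma/2$ replaces all of that and also absorbs $k=7$, which is a genuine simplification (and you correctly recognize that the naive reduction $f(n)\le f(n_k)$ is useless for small $k$, where $f(n_k)>e^\gamma/2$). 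For $\omega(n)\ge 8$ the paper follows \cite{6}: it passes to $\log(\sigma(n)/n)<\sum 1/q_j$, splits on whether $q_m\ge\log n$ or $q_m<\log n$, and invokes the Rosser--Schoenfeld bound on $\sum_{p\le x}1/p$; you instead estimate $f(n_k)$ directly via $\prod(1+1/p)=\prod(1-1/p^{2})/\prod(1-1/p)$, the constant $6/\pi^{2}$, and a lower bound on $\theta(p_{k+1})$, which avoids the $q_m$-versus-$\log n$ dichotomy altogether and makes visible why the limit is $4e^\gamma/\pi^{2}$ (the paper proves this separately as its Proposition 5.3). The one place to be careful is the explicit calibration you flag: Rosser--Schoenfeld's sharper form of Mertens' product bound carries a threshold on $x$ (e.g.\ $x\ge 285$ for the $1+1/(2\log^{2}x)$ version), so your finite verification window $8\le k\le K$ must extend at least to the corresponding primorial; since the margin $\pi^{2}/8\approx 1.23$ against the combined error factors is ample and the primorials in that range are few, this is indeed only bookkeeping and the argument goes through.
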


\begin{remark}
The smallest odd integer with seven distinct prime factors is $n = 4849845 = 3 \cdot 5 \cdot 7 \cdot 11 \cdot 13 \cdot 17 \cdot 19$.
\end{remark}

\begin{proof}
Let $m$ be the number of prime factors of an odd squarefree $n$. First, we show that squarefree integers $n \geq 4849845$ with six or fewer prime factors ($m \leq 6$) satisfy $\frac{\sigma(n)}{n \log{\log{n}}} < \frac{e^\gamma}{2}$. We note the following for constant $m$: 
\begin{itemize}
    \item Increasing the value of a prime factor in $n$ to another prime factor that does not divide $n$ decreases the value of $f(n)=\sigma(n)/n \log\log n$. The only possible way to increase both $n$ and $f(n)$ simultaneously is to increase one prime factor in $n$ and decrease another prime factor in $n$.
    \item Consider all odd squarefree numbers with $m$ prime factors and largest prime factor $q$. The function $f(n)$ is maximized over these integers if $n$ is the product of $q$ and the first $m-1$ primes.
    \item If we increase $q$, our maximal value of $f(n)$ decreases.
\end{itemize}

We proceed by analyzing each case $m = 1$ to $m = 6$.

\boldmath $m = 1$: \unboldmath $p = 29 < 4849845$ is the first prime that satisfies the analogue to Robin's inequality in the statement of the theorem. Therefore, all primes 
$p\ge 29$ satisfy the inequality.

\boldmath $m = 2$: \unboldmath $n = 3 \cdot 37$ satisfies the analogue to Robin's inequality, and therefore all $n=pq$ with primes $p<q$ and $q\ge 37$ satisfy the inequality. Since $29\cdot 31 < 4849845$, the theorem is true for all odd squarefree numbers with 2 prime factors.

\boldmath $m = 3$: \unboldmath $n = 3 \cdot 5 \cdot 41$ satisfies the analogue to Robin's inequality. Since $31\cdot 37\cdot 41 < 4849845$, the theorem is true for $m=3$.

\boldmath $m = 4$: \unboldmath $n = 3 \cdot 5 \cdot 7 \cdot 37$ satisfies the analogue to Robin's inequality. Since $23\cdot 29\cdot 31\cdot 37 < 4849845$, the theorem is true 
for $m=4$.

\boldmath $m = 5$: \unboldmath $n = 3 \cdot 5 \cdot 7 \cdot 11 \cdot 29$ satisfies the analogue to Robin's inequality. Since $13\cdot 17\cdot 19\cdot 23\cdot 29 < 4849845$, 
the theorem is true for $m=5$.

\boldmath $m = 6$: \unboldmath $n = 3 \cdot 5 \cdot 7 \cdot 11 \cdot 13 \cdot 23$ satisfies the analogue to Robin's inequality, so the inequality holds when $m=6$ and the largest prime factor is at least 23.   Moreover,    a calculation shows that all odd squarefree numbers $n \geq 4849845$ with $m=6$ and $q \leq 23$ satisfy the inequality.

For values of $n$ with seven or more prime factors, we proceed in the manner of \cite[pp. 360--361]{6}. It suffices to consider $n$ of the form $\prod_{n=1}^m q_j$, where $q_1=3, q_2=5, \dots$,
and $q_m$ is the $m$th odd prime. 

\noindent \textbf{Case 1:} $q_m \geq \log{(q_1 \dots q_m)} = \log{n}$ \\
The proof of this case is identical to that of \cite[p. 360]{6}.

\noindent \textbf{Case 2:} $q_m < \log{(q_1 \dots q_m)} = \log{n}$ \\
As in \cite{6}, we note that 
\begin{equation} \label{eq51}
    \sum_{j=1}^{m} (\log{(q_j+1)} - \log{q_j}) = \sum_{j=1}^{m} \int_{q_j}^{q_j+1} \frac{dt}{t} < \sum_{j=1}^{m} \frac{1}{q_j}.
\end{equation}
As the LHS of \eqref{eq51} equals $\log{\left( \frac{\sigma(n)}{n} \right)}$, we want to show that, when $m\ge 7$, 
\begin{equation} \label{eq52}
    \sum_{j=1}^{m} \frac{1}{q_j} < \gamma - \log2+ \log{\log{\log{n}}}.
\end{equation}
By \cite[Corollary (3.20)]{15}, we have that (with $p = 2$ included)
\begin{equation}
    \sum_{p \leq x} \frac{1}{p} < \log{\log{x}} + B + \frac{1}{\log^2{x}},
\end{equation}
where $B \approx 0.2615$ is the Meissel-Mertens constant. Since $m\ge 7$, we have $q_m\ge 19$, which implies that
\begin{equation} \label{eq54}
    \log{\log{q_m}} + B - \frac{1}{2} + \frac{1}{\log^2 q_m} < \gamma - \log2 + \log{\log{q_m}}.
\end{equation}
Combining (5.3), (5.4), and the assumption that $q_m < \log{n}$, we have that, for odd primes $p$,
\begin{equation}
    \sum_{j = 1}^m \frac{1}{q_j} < \gamma - \log2 + \log{\log{q_m}} < \gamma - \log2 + \log{\log{\log{n}}},
\end{equation}
as desired.
\end{proof}
\begin{remark}
A calculation plus Lemma \ref{lm41} and Theorem \ref{thm51} shows that the inequality \eqref{thm41equ} in Theorem \ref{lagariasThm} is true for all odd squarefree numbers.
\end{remark}

Note that our proof of Theorem 5.1 is almost identical to that for squarefree numbers in \cite[pp. 360--361]{6}; the only difference is the use of a sharper bound for the sum of reciprocals of primes in (5.3). Furthermore, by replacing (5.3) with the even sharper estimate of \cite[Theorem 5]{15}, which states that, for $x \geq 286$, 
\begin{equation} \label{eq56}
    \sum_{p \leq x} \frac{1}{p} < \log{\log{x}} + B + \frac{1}{2\log^2{x}},
\end{equation}
we can prove the following:
\begin{corollary} \label{largeSquarefree}
    Odd squarefree numbers with at least 13 prime factors satisfy $\frac{\sigma(n)}{n \log{\log{n}}} < 0.45e^\gamma$.
\end{corollary}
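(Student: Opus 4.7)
The plan is to mirror the proof of Theorem~\ref{thm51} essentially line for line, replacing the bound (5.3) with the sharper estimate (5.6) at the key step. The reduction from the beginning of that proof goes through unchanged: for a fixed number $m$ of distinct odd prime factors, $f(n)=\sigma(n)/(n\log\log n)$ on odd squarefree integers with exactly $m$ prime factors is maximized at $n_m^{\star}=\prod_{j=1}^{m}q_j$, where $q_j$ denotes the $j$th odd prime (both bullet points used in the proof of Theorem~\ref{thm51} apply verbatim). So it suffices to prove $f(n_m^{\star})<0.45\,e^{\gamma}$ for every $m\ge 13$.

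For each $m$ with $q_m\ge 286$ I would then split into Cases 1 and 2 exactly as in the proof of Theorem~\ref{thm51}. Case 1 ($q_m\ge\log n$) is handled by the argument of \cite[p.~360]{6}, now with the target constant $0.45\,e^\gamma$. In Case 2 ($q_m<\log n$), the sharper bound (5.6) gives
\[
\log\frac{\sigma(n)}{n}\;<\;\sum_{3\le p\le q_m}\frac{1}{p}\;<\;\log\log q_m+B-\tfrac12+\frac{1}{2\log^2 q_m},
\]
and since $q_m<\log n$ gives $\log\log q_m\le\log\log\log n$, the inequality $\log(\sigma(n)/n)<\log(0.45)+\gamma+\log\log\log n$ reduces to
\[
B-\tfrac12+\frac{1}{2\log^2 q_m}\;<\;\log(0.45)+\gamma,
\]
equivalently $\log^2 q_m>\bigl(2(\log(0.45)+\gamma-B+\tfrac12)\bigr)^{-1}\approx 29.1$, i.e.\ $q_m>e^{5.40}\approx 221$; this is comfortably met when $q_m\ge 286$. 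The remaining finite range $13\le m\le m_0$ (where $m_0$ is the largest index with $q_m<286$, so $m_0\approx 60$) is dispatched by direct computation of $f(n_m^{\star})$.

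The main obstacle is that the inequality is essentially saturated at $m=13$: one computes $\sigma(n_{13}^{\star})/n_{13}^{\star}\approx 2.8722$ and $\log\log n_{13}^{\star}\approx 3.595$, so $f(n_{13}^{\star})\approx 0.7989$, only about $0.3\%$ below $0.45\,e^\gamma\approx 0.8015$. Because of this thin margin, the base-case computation must be carried out in exact (rational) arithmetic, and, more delicately, one must verify that the Case 1 argument borrowed from \cite[p.~360]{6} (only cited rather than reproduced in Theorem~\ref{thm51}) still delivers the sharper constant $0.45\,e^\gamma$ without forcing the threshold $m\ge 13$ upward. That compatibility check is where any real difficulty would surface.
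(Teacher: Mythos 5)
Your proposal follows the paper's proof essentially verbatim: the paper likewise reduces to $n=\prod_{j=1}^m q_j$ with $q_j$ the $j$th odd prime, applies the sharpened Rosser--Schoenfeld estimate \eqref{eq56} (with the $p=2$ term removed) to obtain the analogue \eqref{eq58} of \eqref{eq54} for $q_m\ge 286$, and disposes of the remaining range $43\le q_m\le 286$ by direct computation. Your explicit observations about the thin margin at $m=13$ (about $0.3\%$) and the need to check that the Case 1 argument borrowed from \cite{6} still delivers the constant $0.45e^\gamma$ are legitimate points of care that the paper's own, very terse, proof leaves implicit.
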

\begin{proof}
    Analogous to \eqref{eq52}, we want to show that if $q_m < \log{n}$ then
    \begin{equation} \label{eq57}
    \sum_{j=1}^{m} \frac{1}{q_j} < \gamma + \log 0.45 + \log{\log{\log{n}}}.
    \end{equation}
    Furthermore, analogous to \eqref{eq54}, we have the following: if $q_m \geq 286$ then
    \begin{equation} \label{eq58}
        \log{\log{q_m}} + B - \frac{1}{2} + \frac{1}{2\log^2 q_m} < \gamma + \log 0.45 + \log{\log{q_m}}.
    \end{equation}
    Combining \eqref{eq56} (without $p = 2$) with \eqref{eq58}, we reach \eqref{eq57}. A computer calculation for $n=\prod_{i=1}^{m} q_i$ for
$43 \leq q_m \leq 286$ yields the desired result for these $n$,
from which the corollary follows.
\end{proof}

\begin{prop}
As $n$ runs through odd squarefree integers,  $\limsup \frac{\sigma(n)}{n \log{\log{n}}}=\frac{4e^\gamma}{\pi^2}$.
\end{prop}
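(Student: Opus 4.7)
The plan is to identify the \emph{odd primorials} $N_k = 3\cdot 5\cdot 7 \cdots q_k$ (with $q_j$ denoting the $j$-th odd prime) as the extremal sequence, and to show separately that they realise the value $\tfrac{4e^\gamma}{\pi^2}$ asymptotically, while no other odd squarefree $n$ exceeds this value in the limit.

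For the lower bound I would evaluate $\sigma(N_k)/N_k = \prod_{3\le p\le q_k}(1+1/p)$ directly. Writing $1+1/p = (1-1/p^2)/(1-1/p)$ and combining Mertens' theorem $\prod_{p\le x}(1-1/p)^{-1} \sim e^\gamma \log x$ with $\prod_{p}(1-1/p^2) = 1/\zeta(2) = 6/\pi^2$ gives $\prod_{p\le x}(1+1/p)\sim \frac{6e^\gamma}{\pi^2}\log x$. Dividing out the omitted factor $1+1/2 = 3/2$ yields $\prod_{3\le p\le q_k}(1+1/p) \sim \frac{4e^\gamma}{\pi^2}\log q_k$. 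Since $\log N_k = \theta(q_k) - \log 2 \sim q_k$ by the prime number theorem, one has $\log\log N_k \sim \log q_k$, and therefore $\sigma(N_k)/(N_k \log\log N_k) \to \frac{4e^\gamma}{\pi^2}$. This alone shows $\limsup \ge \frac{4e^\gamma}{\pi^2}$.

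For the upper bound I would use a simple rearrangement argument. If $n$ is odd squarefree with exactly $k$ distinct prime factors $q_1' < q_2' < \cdots < q_k'$, then $q_i' \ge q_i$ for every $i$, so $\sigma(n)/n = \prod_{i=1}^k(1+1/q_i') \le \prod_{i=1}^k(1+1/q_i) = \sigma(N_k)/N_k$ and simultaneously $n = \prod_i q_i' \ge \prod_i q_i = N_k$, hence $\log\log n \ge \log\log N_k$. Combining these gives $\frac{\sigma(n)}{n\log\log n} \le \frac{\sigma(N_k)}{N_k\log\log N_k}$. Given $\varepsilon>0$, the convergence established above supplies $K$ with $\frac{\sigma(N_k)}{N_k\log\log N_k} < \frac{4e^\gamma}{\pi^2}+\varepsilon$ for all $k\ge K$, so this bound holds for all odd squarefree $n$ having at least $K$ prime factors. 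For the remaining $n$, $\sigma(n)/n$ is bounded by the constant $\prod_{i=1}^{K-1}(1+1/q_i)$, so $\sigma(n)/(n\log\log n)\to 0$ as $n\to\infty$ through integers with fewer than $K$ prime factors. Together these imply $\limsup \le \frac{4e^\gamma}{\pi^2}$, and with the lower bound we conclude equality.

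The main obstacle is essentially bookkeeping rather than analysis: the version of Mertens' theorem and the PNT in the form $\theta(x)\sim x$ are classical, so the only delicate point is correctly tracking the factor $2/3$ arising from removing $p=2$ from the Euler product, which is what replaces the usual constant $\frac{6e^\gamma}{\pi^2}$ (for all squarefree integers) by $\frac{4e^\gamma}{\pi^2}$ in the odd case.
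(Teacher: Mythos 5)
Your proposal is correct and follows essentially the same route as the paper: reduce to the odd primorials and combine Mertens' theorem with $\prod_p(1-1/p^2)=6/\pi^2$ (adjusted for the omission of $p=2$) together with $\theta(q_k)\sim q_k$. The only difference is that you spell out the rearrangement argument justifying the reduction to primorials and the disposal of $n$ with a bounded number of prime factors, which the paper compresses into the single sentence ``It suffices to consider $n$ of the form $q_1q_2\cdots q_m$.''
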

\begin{proof}
It suffices to consider $n$ of the form $q_1q_2\cdots q_m$. 
We have that
$$\frac{\sigma(n)}{n} = \prod_{1 \leq i \leq m} \left(\frac{1+q_i}{q_i} \right)$$
and 
$$\log{q_1q_2 \dots q_n} = \sum_{1\leq i \leq m} \log{q_i} = \theta(q_m) - \log{2} \approx q_m$$
since $\lim_{n \rightarrow \infty} \frac{\theta(n)}{n} = 1$. Combining these two expressions, we have that
\begin{equation} \label{robinAsy}
\frac{\sigma(n)}{n \log\log{n}} \approx \frac{\prod_{1 \leq i \leq m} \left(\frac{1+q_i}{q_i} \right)}{\log{q_m}}.
\end{equation}
A theorem of Mertens implies that 
$$\prod_{3\leq p \leq x} \left(1-\frac{1}{p} \right)^{-1} \approx \frac{e^\gamma}{2}\log{x}.$$
Since $$\prod_{p \geq 2} \left(1-\frac{1}{p^2} \right)^{-1} = \sum_{n=1}^{\infty} \frac{1}{n^2} = \frac{\pi^2}{6},$$
we have, for odd primes $p$, $$\prod_{p \geq 3} \left(1-\frac{1}{p^2} \right)^{-1} = \frac{\pi^2}{8}.$$
Thus, for odd primes $q$,
$$\prod_{q \leq q_m} \left(\frac{1+q}{q}\right) \approx  \frac{8}{\pi^2}\prod_{q \leq q_m} \left(1-\frac{1}{q} \right)^{-1} \approx \frac{8}{\pi^2}\frac{e^\gamma}{2} \log{q_m}.$$
Substituting this into \eqref{robinAsy}, we have that
$$\frac{\sigma(n)}{n \log\log{n}} \approx \frac{4e^\gamma}{\pi^2}.$$
\end{proof}
\noindent Thus we can see that the constant 0.45 in Corollary \ref{largeSquarefree} can be improved only slightly to $\frac{4e^\gamma}{\pi^2} \approx 0.4053$.

In the vein of Theorem \ref{MainThm1}, we extend \cite[Theorem 1.1]{6}, which states that all squarefree integers greater than 30 satisfy Robin's inequality, to the following: \begin{theorem} Let $n=2^km$, where $m$ is an odd, squarefree integer and $k\ge 0$. If $n\ge 841$ then
Robin's inequality (\ref{Robineq})  is true for $n$.
\end{theorem}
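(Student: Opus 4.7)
The plan is to prove the theorem in three cases, separating $k=0$ from $k\ge 1$, and in the latter case further splitting by the size of $m$.

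First, if $k=0$ then $n=m$ is odd squarefree, and the conclusion is immediate from \cite[Theorem 1.1]{6}, which establishes Robin's inequality unconditionally for every squarefree integer exceeding $30$.

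Second, if $k\ge 1$ and $m\ge 4849845$, I would combine multiplicativity of $\sigma$ with Theorem \ref{thm51}. Since $\gcd(2^k,m)=1$,
$$\frac{\sigma(n)}{n}=\frac{\sigma(2^k)}{2^k}\cdot\frac{\sigma(m)}{m}=(2-2^{-k})\frac{\sigma(m)}{m}<2\cdot\frac{\sigma(m)}{m},$$
and Theorem \ref{thm51} applied to the odd squarefree $m$ gives $\sigma(m)/m<(e^\gamma/2)\log\log m$. Combining these, $\sigma(n)/n<e^\gamma\log\log m\le e^\gamma\log\log n$, as required.

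Third, if $k\ge 1$ and $m<4849845$, then since $3\cdot 5\cdot 7\cdot 11\cdot 13\cdot 17\cdot 19=4849845$ is the smallest odd squarefree integer with seven prime factors, $m$ has at most six distinct (necessarily odd) prime factors. Hence
$$\frac{\sigma(m)}{m}=\prod_{p\mid m}\left(1+\frac{1}{p}\right)\le\prod_{i=1}^{6}\left(1+\frac{1}{q_i}\right)=\frac{580608}{255255}<2.275,$$
where $q_i$ denotes the $i$-th odd prime, which yields the uniform bound $\sigma(n)/n<4.55$. Robin's inequality therefore holds automatically once $\log\log n>4.55/e^\gamma\approx 2.55$, i.e., once $n$ exceeds roughly $4\times 10^5$. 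This leaves only finitely many $n=2^k m$ of the prescribed form in the window $841\le n< 4\times 10^5$, which I would dispose of by direct computer verification.

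The main obstacle is this final finite check. The bound $841$ is sharp: $n=840=2^3\cdot 3\cdot 5\cdot 7$ is a classical violator of Robin's inequality, and nearby values such as $n=1680=2^4\cdot 3\cdot 5\cdot 7$ satisfy the inequality only by a narrow margin. Hence no cruder uniform estimate can bypass a genuine numerical verification close to the threshold, though the computation itself is routine once the ranges of $k$ and $m$ are enumerated.
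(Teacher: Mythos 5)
Your argument is correct, and in the main case it is the same as the paper's: for $m\ge 4849845$ both proofs combine $\sigma(2^km)/(2^km)<2\,\sigma(m)/m$ with Theorem \ref{thm51} (your direct use of $\log\log m\le\log\log n$ is just a cleaner phrasing of the paper's observation that the exponent $C_2=2\sigma(m)/(e^{\gamma}m\log\log m)$ drops below $1$). Where you genuinely diverge is the case $m<4849845$. The paper derives, for each individual squarefree $m$, a threshold $2^k\ge (1/m)e^{(\log m)^{C_2}}$ beyond which Robin's inequality holds for $2^km$, and then runs a computation over all odd squarefree $m<4849845$ together with the finitely many exceptional $k$ for each. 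You instead use the fact that such an $m$ has at most six odd prime factors to get the uniform bound $\sigma(n)/n<2\cdot\tfrac{580608}{255255}<4.55$, which produces a single absolute cutoff near $4\times10^{5}$ and reduces everything to one sweep over $n<4\times10^{5}$. Both routes terminate in a routine finite verification with largest exception $n=840$; yours is simpler to state and gives a smaller search space, while the paper's per-$m$ criterion is sharper for each fixed $m$ and reuses the same mechanism that appears elsewhere in the paper (compare Corollary \ref{c1k}). Your separate case $k=0$ via \cite[Theorem 1.1]{6} is legitimate, though the odd squarefree $n$ below your cutoff could equally well have been absorbed into the same finite check.
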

\begin{proof}Let 
$$ 
C_1=\frac{\sigma(m)}{m \log\log m}
$$
and let $C_2=2C_1/e^{\gamma}$. If $2^k$ is a power of 2 with 
\begin{equation}\label{2^kinequality}
2^k\ge (1/m)e^{(\log m)^{C_2}}
\end{equation}
then $\log\log(2^k m) \ge C_2\log\log m$, so
$$
\frac{\sigma(2^km)}{2^km\log\log(2^km)} < \frac{2\sigma(m)}{m\log\log(2^km)} \le \frac{2\sigma(m)}{C_2m\log\log m} = e^{\gamma}.
$$

 If $m\ge 4849845$, then Theorem 5.1 says that $C_1<e^{\gamma}/2$, so $C_2<1$. Therefore, every $k\ge 0$ satisfies (\ref{2^kinequality}), so we obtain Robin's inequality for $2^km$.

For each squarefree $m<4849845$, there is a small finite set of values of $k$ satisfying (\ref{2^kinequality}), and a straightforward computer computation verifies Robin's inequality
with a finite number of exceptions, the largest being $n=840$. \end{proof}

\cite[Theorem 1.2]{6} states that any odd number greater than 9 satisfies Robin's inequality. This result can be improved as well. 
Eum and Koo, without assuming the Riemann hypothesis, prove the following theorem using methods similar to those of \cite{6}. 
\begin{theorem}{\rm \cite{9}}
 For odd $n > 21$, $\frac{\sigma(n)}{n\log\log{n}} < \frac{3}{4}e^\gamma$.
\end{theorem}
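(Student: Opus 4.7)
The plan is to follow the template of Theorem 5.1 and [6, Theorem 1.2], splitting by $m = \omega(n)$ and combining analytic Mertens-type bounds for large $m$ with a finite case analysis for small $m$. Since the target $\tfrac{3}{4}e^\gamma$ is significantly looser than the $\tfrac{1}{2}e^\gamma$ bound proved in Theorem 5.1, both parts of the argument should be easier, and the computational range should collapse to small integers near the claimed cutoff $n=21$.

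For the analytic part, Corollary \ref{largeSquarefree} already gives $f(n) < 0.45\,e^\gamma < \tfrac{3}{4}e^\gamma$ for odd squarefree $n$ with $\omega(n) \ge 13$. For smaller $m$, I would split as in Theorem \ref{thm51} into the sub-case $q_m \ge \log n$, handled by the method of \cite[p.~360]{6} that isolates the largest prime factor, and the sub-case $q_m < \log n$, where the Mertens estimate \cite[Corollary 3.20]{15} reduces the problem to showing
\[
\sum_{j=1}^{m} \frac{1}{q_j} < \gamma + \log(3/4) + \log\log\log n.
\]
Since $\log(3/4) - (-\log 2) = \log(3/2) > 0$, this target is weaker by a positive constant than the analogous inequality in Case 2 of Theorem \ref{thm51}, and so should hold down to much smaller $q_m$. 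For non-squarefree $n$, I would use the multiplicative bound $\sigma(n)/n < \prod_{p\mid n} p/(p-1)$ together with $\log\log n \ge \log\log(\mathrm{rad}(n))$ to reduce to the squarefree case, modulo a handful of small non-squarefree exceptions.

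The remaining small cases (very small $m$, plus non-squarefree exceptions near $n=21$) must be enumerated by hand or by a short computation, and the main obstacle is calibrating the analysis so that exactly the odd $n \le 21$ that violate the bound are identified as exceptions. The critical numerical check is that $f(21) = 32/(21\log\log 21) > \tfrac{3}{4}e^\gamma$ (so the cutoff is sharp), while $f(n) < \tfrac{3}{4}e^\gamma$ for every odd $n$ from $23$ up to some modest threshold, complemented by the analytic argument above that no violation can occur past that threshold. This requires the Mertens bounds to be sharp enough at moderate $m$ (roughly $4 \le m \le 12$), but the generous gap between the target and the squarefree limiting value $4e^\gamma/\pi^2 \approx 0.405\,e^\gamma$ established in the preceding proposition makes this eminently feasible.
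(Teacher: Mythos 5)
First, a point of reference: the paper does not prove this statement at all --- it is quoted verbatim from Eum and Koo \cite{9}, with only the remark that their methods are ``similar to those of \cite{6}.'' So there is no in-paper proof to match; your outline is being judged on its own viability. In spirit it is the right template (split on $m=\omega(n)$, Mertens-type bounds when the largest prime factor is below $\log n$, a finite check for small $n$), and your numerical anchor is correct: $\sigma(21)/(21\log\log 21)=32/(21\log\log 21)\approx 1.369 > \tfrac34 e^\gamma\approx 1.336$, so the cutoff $n>21$ is sharp.

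The genuine gap is the reduction of non-squarefree odd $n$ to the squarefree case. You propose to use $\sigma(n)/n<\prod_{p\mid n}p/(p-1)$ and $\log\log n\ge\log\log(\mathrm{rad}(n))$ and then ``reduce to the squarefree case,'' but $\prod_{p\mid n}p/(p-1)=\prod_{p\mid n}(1-1/p)^{-1}$ is \emph{not} $\sigma(\mathrm{rad}(n))/\mathrm{rad}(n)=\prod_{p\mid n}(1+1/p)$; the two differ by the factor $\prod_{p\mid n}(1-p^{-2})^{-1}$, which for odd $n$ can be as large as $\pi^2/8\approx 1.234$. Consequently you cannot invoke Theorem \ref{thm51} or Corollary \ref{largeSquarefree} (both stated only for squarefree $n$) to dispose of general odd $n$: knowing $f(\mathrm{rad}(n))<\tfrac34 e^\gamma$ does not bound $f(n)$. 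The fix is standard but must actually be carried out: run the whole Case 1 / Case 2 analysis on the larger product $\prod_{p\mid n}(1-1/p)^{-1}$, using the explicit Rosser--Schoenfeld/Robin bounds on $\prod_{p\le x}(1-1/p)^{-1}$ (in the style of \eqref{eq29}) rather than the bound $\log(\sigma(n)/n)<\sum 1/q_j$, which is valid only in the squarefree setting. The asymptotics still leave room, since the limiting constant over all odd $n$ is $e^\gamma/2<\tfrac34 e^\gamma$, but as written the step is a non sequitur, and it is precisely the non-squarefree extremal numbers (the odd colossally abundant numbers of Table 2, e.g.\ $45$, $315$, $3465,\dots$) that come closest to the bound, so this is not a case one can wave away as ``a handful of small exceptions.''
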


\subsection{General Powers of Primes}
Recall that  $f(n) = \frac{\sigma(n)}{n\log{\log{n}}}$. Consider an integer $n$ and an integer $p^kn$ with $k > 0$ such that $p \nmid n$. We have the following:
\begin{align}
    \log{\log{p^kn}} &= \log{\left( k\log{p} + \log{n} \right)} \\
    &= \log{\left( (\log{n}) \left( 1+\frac{k\log{p}}{\log{n}} \right) \right)} \\
    &= (\log{\log{n}}) \left( 1 + \frac{\log{\left( 1+\frac{k\log{p}}{\log{n}} \right)}}{\log{\log{n}}} \right).
\end{align}
Since $\frac{\sigma(p^kn)}{p^kn} = \frac{\sigma(p^k)}{p^k} \frac{\sigma(n)}{n}$, we have 
\begin{align} \label{gDef}
    f(p^kn) &= \frac{\sigma(p^k)}{p^k}\left( 1 + \frac{\log{\left( 1+\frac{k\log{p}}{\log{n}} \right)}}{\log{\log{n}}}\right)^{-1} f(n) \\
&= g(n, k, p)f(n),
\end{align}
where the multivariable function $g(n, k, p)$ is the factor by which $f(n)$ changes when $n$ is multiplied by $p^k$ when $p \nmid n$. 

\begin{theorem} \label{generalK}
    Given an arbitrary prime $p$ that does not divide the positive integer $n$, and an arbitrary positive constant $c$, 
if $$k > \frac{\log{n}}{\log{p}} \left( (\log{n})^{\frac{p+c-cp}{c(p-1)}}-1 \right)$$
then $g(n, k, p) < c$.
\end{theorem}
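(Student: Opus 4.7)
The plan is to reverse-engineer the stated bound on $k$ from the definition of $g(n,k,p)$ by asking what condition on $k$ is sufficient to force $g(n,k,p)<c$. From the display just above the theorem statement,
$$g(n,k,p) = \frac{\sigma(p^k)}{p^k}\left(1+\frac{\log\left(1+\frac{k\log p}{\log n}\right)}{\log\log n}\right)^{-1}.$$
The first step is to control the $\sigma(p^k)/p^k$ factor by the uniform bound
$$\frac{\sigma(p^k)}{p^k} = \frac{p^{k+1}-1}{p^k(p-1)} < \frac{p}{p-1},$$
which holds for every $k\ge 1$ and converts the task into a sufficient condition that does not depend on the precise value of $k$ inside $\sigma(p^k)/p^k$.

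Next I would solve the inequality $\tfrac{p}{p-1}\bigl(1+\tfrac{\log(1+k\log p/\log n)}{\log\log n}\bigr)^{-1} < c$ algebraically. Rearranging gives
$$1+\frac{\log\!\left(1+\frac{k\log p}{\log n}\right)}{\log\log n} > \frac{p}{c(p-1)},$$
which, after subtracting $1$ and simplifying the right-hand side to $\tfrac{p+c-cp}{c(p-1)}$, becomes
$$\log\!\left(1+\frac{k\log p}{\log n}\right) > \frac{p+c-cp}{c(p-1)}\,\log\log n.$$
Exponentiating and isolating $k$ then yields exactly
$$k > \frac{\log n}{\log p}\left((\log n)^{\frac{p+c-cp}{c(p-1)}}-1\right),$$
which is the hypothesis of the theorem. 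The implications are reversible given the bound $\sigma(p^k)/p^k < p/(p-1)$, so the hypothesis is sufficient.

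There is no real obstacle here beyond bookkeeping; the proof is essentially one chain of equivalences plus the elementary bound on $\sigma(p^k)/p^k$. The one small subtlety is that the exponent $\frac{p+c-cp}{c(p-1)}$ can be negative (when $c$ is large) or the quantity $(\log n)^{(\cdot)}-1$ can be negative (when $n$ is small, making $\log n < 1$), in which case the stated lower bound on $k$ is automatic and the conclusion is vacuous or trivial; I would note this briefly rather than treat it as a separate case. Otherwise the argument is a direct computation, and I expect the write-up to be only a few lines.
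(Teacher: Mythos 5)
Your proof is correct and is essentially the paper's argument: both rest on the single estimate $\sigma(p^k)/p^k < p/(p-1)$ and then solve the resulting inequality for $k$, arriving at the same exponent $\frac{p+c-cp}{c(p-1)}$. The only difference is organizational — the paper carries the exact value $\frac{p^{k+1}-1}{p^{k+1}-p^k}$ through the algebra and only at the end replaces the $k$-dependent exponent by its limit (which amounts to the same bound), whereas you apply the bound up front, which lets you skip the monotonicity-and-limit step entirely.
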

\begin{proof}
    We want $g(n, k, p) < c$, so by the definition of $g(n, k, p)$ in \eqref{gDef}, we have
    
    \begin{equation}
        \frac{\sigma(p^k)}{p^k}\left( 1 + \frac{\log{\left( 1+\frac{k\log{p}}{\log{n}} \right)}}{\log{\log{n}}}\right)^{-1} < c
    \end{equation}
    \begin{equation}
        \iff \frac{p^{k+1}-1}{p^{k+1}-p^k} < c\left( 1 + \frac{\log{\left( 1+\frac{k\log{p}}{\log{n}} \right)}}{\log{\log{n}}}\right)
    \end{equation}
    \begin{equation}
        \iff (\log{\log{n}})\left(\frac{p^{k+1}(1-c)+cp^k-1}{p^{k+1}-p^k}\right) < c\log{\left(1 + \frac{k\log{p}}{\log{n}} \right)}.
    \end{equation}
    Exponentiating both sides, we find that we want
    \begin{equation} \label{eq16}
        (\log{n})^{\left(\frac{p^{k+1}(1-c)+cp^k-1}{p^{k+1}-p^k}\right)} < \left(1 + \frac{k\log{p}}{\log{n}} \right)^c.
    \end{equation}
    For constant $c$, we have that $\frac{p^{k+1}(1-c)+cp^k-1}{p^{k+1}-p^k}$ is increasing as $k$ increases and that $$\lim_{k\to\infty} \frac{p^{k+1}(1-c)+cp^k-1}{p^{k+1}-p^k} = \frac{p+c-cp}{p-1}.$$ Thus, we know that (5.13) is true if 
    \begin{equation} \label{eq17}
        (\log{n})^{\left( \frac{p+c-cp}{p-1} \right)} < \left(1 + \frac{k\log{p}}{\log{n}} \right)^c.
    \end{equation}
    Solving for $k$, we find that $g(n, k, p) < c$ if 
    \begin{equation} \label{kBound}
        k > \frac{\log{n}}{\log{p}} \left( (\log{n})^{\frac{p+c-cp}{c(p-1)}}-1\right),
    \end{equation}
    as desired.
\end{proof}

A number of interesting results can be deduced from Theorem \ref{generalK}. Setting $c = 1$, we obtain the following:
\begin{corollary} \label{c1k}
Let $n$ be an integer and $p$ be a prime that does not divide $n$. Then $f(p^kn) < f(n)$ if $$k > \frac{\log{n}}{\log{p}} \left( (\log{n})^{\frac{1}{p-1}}-1\right).$$ 
\end{corollary}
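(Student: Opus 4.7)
The plan is to derive the corollary directly by specializing Theorem \ref{generalK} to the case $c=1$. Since the preceding theorem is stated for an arbitrary positive constant $c$, and the definition \eqref{gDef} gives the factorization $f(p^kn) = g(n,k,p)f(n)$, the statement $f(p^kn) < f(n)$ is equivalent to $g(n,k,p) < 1$ (noting $f(n) > 0$ whenever $n \geq 3$, i.e., whenever $\log\log n$ is defined and positive).

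First, I would compute what the hypothesis of Theorem \ref{generalK} becomes under the substitution $c=1$. The exponent appearing there simplifies as
$$\frac{p+c-cp}{c(p-1)} \bigg|_{c=1} = \frac{p+1-p}{1\cdot(p-1)} = \frac{1}{p-1},$$
so the bound
$$k > \frac{\log n}{\log p}\Bigl((\log n)^{\frac{p+c-cp}{c(p-1)}}-1\Bigr)$$
becomes exactly
$$k > \frac{\log n}{\log p}\Bigl((\log n)^{\frac{1}{p-1}}-1\Bigr),$$
which is the hypothesis of the corollary.

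Next, I would invoke Theorem \ref{generalK} with $c=1$ to conclude $g(n,k,p) < 1$. Combining with $f(p^kn) = g(n,k,p)f(n)$ from \eqref{gDef} yields $f(p^kn) < f(n)$, as required. One small caveat to address: the identity $f(p^kn) = g(n,k,p)f(n)$ and the manipulations preceding it tacitly assume $n$ is large enough that $\log\log n$ is defined and positive (i.e., $n \geq 3$); for $n=1$ the statement $f(p^k) < f(1)$ is vacuous/undefined, and for $n=2$ one may verify the bound separately or restrict attention to $n \geq 3$, matching the standing domain of $f$ used throughout the paper.

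There is no genuine obstacle here — the result is a direct specialization and the only task is to confirm the algebraic simplification of the exponent and the positivity of $f(n)$. The real content lies in Theorem \ref{generalK}, which the corollary merely repackages in its most natural form: the threshold $k$ above which appending $p^k$ to $n$ strictly decreases the Robin quotient.
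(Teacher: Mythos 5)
Your proof is correct and matches the paper's approach exactly: the paper derives this corollary simply by setting $c=1$ in Theorem \ref{generalK}, which is precisely your specialization, and your algebraic check that the exponent reduces to $\frac{1}{p-1}$ is the only content needed. The added remark about requiring $n\ge 3$ so that $\log\log n$ is defined and positive is a reasonable (if implicit in the paper) precaution.
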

Thus by Corollary \ref{c1k}, we conclude that if $n$ satisfies Robin's inequality  (\ref{Robineq}),  then $p^kn$ must satisfy Robin's inequality for sufficiently large $k$. Applying this conclusion to odd numbers, we know from \cite{6} that all odd numbers greater than 9 satisfy Robin's inequality (\ref{Robineq}). By Corollary \ref{c1k} we note that if 
\begin{equation} \label{eq19}
k > \frac{\log{n}}{\log{2}} \left( (\log{n})-1\right) \end{equation}
then $2^kn$ must satisfy Robin's inequality as well. However, it should be noted that since the bound for $k$ is $O(\log^2{n})$, this bound fails to encompass the colossally abundant numbers by far. For example, the odd colossally number 135135 uses $k > 184.3$ according to \eqref{eq19}.

Theorem \ref{generalK} allows us to create a class of integers that satisfy Robin's inequality and resemble the colossally abundant numbers. We call these numbers \textit{colossally abundant-like} numbers. The algorithm for constructing a \textit{colossally abundant-like} number $N$ is as follows:
\begin{enumerate}
    \item Select a largest prime factor $x = p_n$, the $n$th prime. Let $N = p_n$. 
    \item Set $p = p_{n-1}$ and choose $c > 0$.
    \item Compute the lower bound of $k$ with \eqref{kBound}. Let $L = \lceil k \rceil$.
    \item Multiply $N$ by $p_{n-1}^{L}$. Our new $N = p_np_{n-1}^{L}$.
    \item Repeat steps 2 to 4 with successively decreasing primes. Note that our choice of $c$ may change after each iteration.
\end{enumerate}
Let $c_i$ be the constant $c$ we choose on the $i$th iteration of steps 2 through 4 in the algorithm above. We have formed an integer that satisfies Robin's inequality if $\prod_{i}c_i < \frac{e^\gamma}{f(p_n)}$.

\begin{example} \label{ca-like}
We will construct a \textit{colossally abundant-like} number with largest prime factor $p_n = 17$. Thus we set $N = 17$.

\medskip
\noindent 
\begin{center}
\begin{tabular}{|c|c|c|c|}\hline
$p$ & $c$ & $k$ & $N$\\\hline
13 & 0.67 & 0.9954 & $17 \cdot 13$  \\\hline
11 & 0.91 & 0.9499 & $17 \cdot 13 \cdot 11$ \\\hline
7 & 1.06 & 0.91969 & $17 \cdot 13 \cdot 11 \cdot 7$ \\\hline
5 & 1.12 & 1.8306 & $17 \cdot 13 \cdot 11 \cdot 7 \cdot 5^2$ \\\hline
3 & 1.38 & 2.94396 & $17 \cdot 13 \cdot 11 \cdot 7 \cdot 5^2 \cdot 3^3$ \\\hline
2 & 1.75 & 11.4776 & $17 \cdot 13 \cdot 11 \cdot 7 \cdot 5^2 \cdot 3^3 \cdot 2^{12}$ \\\hline
\end{tabular}
\end{center}
\medskip

\noindent We note that $N = 17 \cdot 13 \cdot 11 \cdot 7 \cdot 5^2 \cdot 3^3 \cdot 2^{12}$ satisfies Robin's inequality (\ref{Robineq}), as 
$$\prod_i c_i = 1.748 < \frac{e^\gamma}{f(17)} = 1.751.$$
\end{example}

Unfortunately we cannot use this algorithm to construct the colossally abundant numbers and show that they satisfy Robin's inequality, even if we optimize our choices of $c_i$ to produce a $k$ as close to the desired exponent as possible. The main issue is in computing the exponents of the smaller primes - if we choose a small $c$, we require $L$ to be too large for $N$ to be colossally abundant. But if we choose a large $c$ in order to attain a smaller $L$, we lose the ability to show that $N$ satisfies Robin's inequality. The discrepancy between whether our algorithm is able to show that $N$ satisfies Robin's inequality and whether $N$ actually satisfies Robin's inequality lies in the step where we take a limit, between equations \eqref{eq16} and \eqref{eq17}.

\begin{example}
Note that the prime factorization of $N$ in Example \ref{ca-like} is almost identical to that of the colossally abundant number $367567200 = 17 \cdot 13 \cdot 11 \cdot 7 \cdot 5^2 \cdot 3^3 \cdot 2^{5}$, only differing in the 2-adic order. 
\end{example}

\subsection{Small Powers of Primes}
\cite{6} shows that all odd numbers $n$ greater than 9 satisfy Robin's inequality, and we have proven that all even numbers of the form $2^kn$, where $k$ is sufficiently large depending on $n$, satisfy Robin's inequality. What remains is to show, for all even numbers with 2-adic order below the bound in \eqref{eq19}, that Robin's inequality is satisfied. Unfortunately, we do not prove this assertion, though we will provide a number of unproven observations regarding $g(n, k, p)$.

As we are primarily concerned with the 2-adic order, we set $p = 2$. If $f(2^kn) = g(n, k, 2)f(n)$, we have that 
\begin{equation}
    g(n, k, 2) = \frac{2^{k+1}-1}{2^k}\left(1+\frac{\log\left(1+\frac{k\log 2}{\log n}\right)}{\log\log n}\right)^{-1}.
\end{equation}
Below is a graph of $g(10^5, k, 2)$.

\begin{center} 
\includegraphics[height=.8in]
{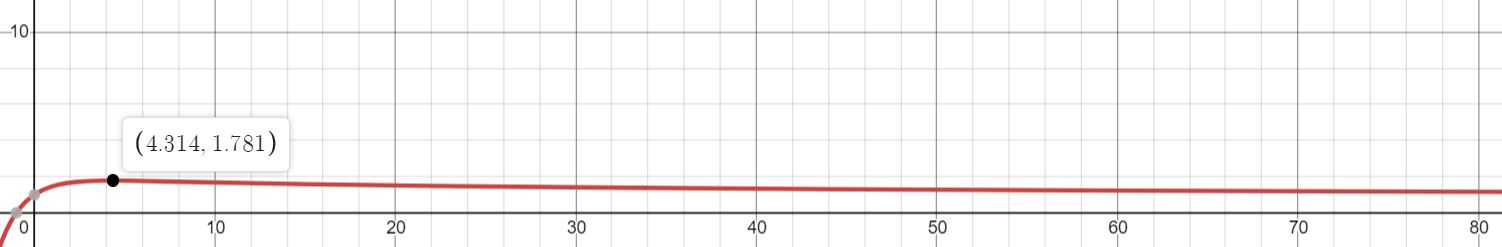}
\end{center}
The graphs for $n > 100$ are very similar to the one pictured. The value of $g(n, k, 2)$ peaks at a positive value, then decreases asymptotically to 0, albeit very slowly. Taking the partial derivative with respect to $k$ of $g(n, k, 2)$, we find that the maximum of $g(n, k, 2)$ for fixed $n$ occurs where $k$ satisfies
\begin{equation} \label{maxK}
    (\log{( 2^kn )}) (\log(\log 2^kn )) - 2^{k+1} + 1 = 0.
\end{equation}

For fixed odd $n$, $f(2^mn)$ is maximized by either $m = \lfloor k \rfloor$ or $m = \lceil k \rceil$ as given in \eqref{maxK}. Investigation into this concept reveals that the colossally abundant numbers do not necessarily maximize $f(2^mn)$ for fixed odd $n$. 

\begin{example}
Take the odd colossally abundant number $n = 135135$. We have that $k = 4.358$, and $f(2^5 \cdot 135135) = 1.72355$, while $f(2^4 \cdot 135135) = 1.72557$. But $2^5\cdot135135$ is the corresponding colossally abundant number, and it does not maximize $f(2^mn)$ for $n = 135135$.
\end{example}

\begin{prop}
As $n$ increases, the value of $k$ in \eqref{maxK} increases as well.
\end{prop}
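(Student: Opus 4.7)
The plan is to regard $k$ as a smooth function of $n$ defined implicitly by \eqref{maxK} and to show $dk/dn > 0$ by the implicit function theorem. Set $L := \log(2^k n) = k\log 2 + \log n$ so that \eqref{maxK} becomes
$$F(k,n) := L\log L - 2^{k+1} + 1 = 0,$$
which also records the identity $2^{k+1} = 1 + L\log L$ that will be used to simplify the $k$-derivative into a form with a transparent sign.

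Using $\partial L/\partial n = 1/n$ and $\partial L/\partial k = \log 2$, I would compute
$$\frac{\partial F}{\partial n} = \frac{1+\log L}{n}, \qquad \frac{\partial F}{\partial k} = (\log 2)\bigl(1+\log L - 2^{k+1}\bigr).$$
The crucial step is to substitute the identity $2^{k+1} = 1+L\log L$ into the bracket: this collapses it to $\log L - L\log L$, so
$$\frac{\partial F}{\partial k} = (\log 2)(\log L)(1-L).$$
Both signs are then controlled by the size of $L$: $\partial F/\partial n > 0$ as soon as $L > 1/e$, while $\partial F/\partial k < 0$ as soon as $L > 1$. Since the only relevant range is $n \geq 3$ and $k \geq 0$ (where $2^k n \geq 3 > e$), both inequalities hold automatically.

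The implicit function theorem then gives
$$\frac{dk}{dn} = -\frac{\partial F/\partial n}{\partial F/\partial k} = -\frac{(1+\log L)/n}{(\log 2)(\log L)(1-L)} > 0,$$
which is precisely the desired monotonicity. The only genuine obstacle is the substitution that eliminates $2^{k+1}$ from $\partial F/\partial k$: without it the sign of the $k$-derivative is not manifest, and one has to confirm that $L > 1$ throughout the intended domain so that the simplified expression has the correct sign. In our setting this check is immediate, so the argument is essentially a one-line application of the implicit function theorem once the identity trick is in place.
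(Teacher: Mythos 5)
Your proposal is correct and follows essentially the same route as the paper: implicit differentiation of \eqref{maxK}, followed by substituting the defining equation back in to make the sign of the relevant derivative manifest (the paper substitutes $\log\log(xn)=\tfrac{2x-1}{\log(xn)}$ with $x=2^k$ into the denominator, while you substitute $2^{k+1}=1+L\log L$ into $\partial F/\partial k$, which is the same trick), and both arguments reduce to the same domain condition $\log(2^kn)>1$.
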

\begin{proof}
Let $x = 2^k$. We rewrite \eqref{maxK} as follows
\begin{equation}
    (\log{xn})(\log\log{xn})-2x+1=0.
\end{equation}
Implicitly differentiating, we reach the following:
\begin{equation} \label{deriv23}
    \left(\frac{dx}{dn}\right) \left(2xn-n\log\log{xn}-n \right) = x\log\log{xn}+1.
\end{equation}
Substituting $\log\log{xn} = \frac{2x-1}{\log{xn}}$ into \eqref{deriv23}, we want to show that
\begin{equation}
    \frac{dx}{dn} = \frac{x\log\log{xn} + 1}{n\left(2x - \frac{2x-1}{\log{xn}} - 1 \right)} > 0.
\end{equation}
This is equivalent to
\begin{equation}
    2x - 1 - \frac{2x-1}{\log{xn}} = (2x-1)\left(1-\frac{1}{\log{xn}}\right) > 0,
\end{equation}
which is true because $\log{xn} > 1$.
\end{proof}

Experimental evidence allows us to conjecture that as $n$ increases, the maximal value of $g(n, k, 2)$ increases asymptotically toward 2.

Note that the formula for $g(n, k, 2)$ is composed completely of elementary functions; the value of $g(n, k, 2)$ is solely dependent on the magnitude of $n$ and $k$, and not on the prime factors of $n$. Thus, we can look for violations to Robin's inequality greater than 5040, if they exist, involving a large odd $n$ and a 2-adic order $k$ as given by (5.22).

\section{Computation of Odd Colossally Abundant Numbers}
In this section we present an algorithm to test the analogue  (\ref{RobinOdd})  of Robin's inequality on all odd colossally abundant numbers up to a bound. We use an algorithm similar to that of \cite{3}. This algorithm allows us to efficiently test Robin's inequality on large colossally abundant numbers.

First we generate a list of primes up to some bound $x$. We call this set \textbf{primeSet}.

Our task now is to generate the set of $\epsilon$ values for all odd colossally abundant numbers with largest prime factor less than $x$. We find the minimum $\epsilon$ in our set, which is the $\epsilon$ needed for the addition of the greatest prime less than $x$. We call this value \textbf{minEpsilon}. The formula is given in Lemma 2.2: $$\epsilon > \log_p{(p+1)}-1.$$

From Section 2, we see that each colossally abundant number can be defined by a range of $\epsilon$ values. By \cite[p. 253]{3}, when $\epsilon$ passes over the threshold value as defined below, the prime $p$ in the corresponding colossally abundant number has its exponent increased from $a-1$ to $a$:
\begin{equation} \label{eqA1}
    \epsilon = \log_p{\left( \frac{1-p^{a+1}}{p-p^{a+1}} \right)}.
\end{equation}
Our algorithm for generating the set of $\epsilon$ is below:

{\bf Result:} Generate all $\epsilon$ for all odd colossally abundant numbers with largest prime factor $\leq x$.

 Initialize an empty list of tuples that will contain epsilons and their corresponding prime. Call this list \textbf{critE};

\qquad {\bf for} primes $p$ in \textbf{primeSet do}

\qquad \qquad  Generate $\epsilon$ for $a = 1$ using \eqref{eqA1} 

\qquad\qquad {\bf while} $\epsilon >$ \textbf{minEpsilon do}

\qquad\qquad\qquad   Add ($\epsilon$, $p$) to \textbf{critE}

 \qquad\qquad\qquad     Increment $a$ by 1

\qquad\qquad\qquad   Generate $\epsilon$ for new $a$ using \eqref{eqA1}

\qquad\qquad {\bf end}

\qquad {\bf end}

\noindent We then implement the following algorithm to test each odd colossally abundant number in our analogue (\ref{RobinOdd})   of Robin's inequality $\frac{\sigma(N)}{N\log{\log{N}}} < \frac{e^\gamma}{2}$. Furthermore, we have that when the exponent of a prime $p$ is increased from $k$ to $k+1$ in the prime factorization of $N$, we have that $\sigma(N)$ is multiplied by a factor of $\frac{p^{k+2}-1}{p^{k+2}-p}$.

In the range of computations of these odd colossally abundant numbers,
the case of an $\epsilon$ corresponding to potentially three or more $N$ does not occur (and it is reasonable to guess that this never happens).

{\bf Result:} Generate and test all odd colossally abundant numbers.

 Initialize $N = 1$ and sum of divisors of $N$, \textbf{sumDivN}, to be 1.

 Initialize a dictionary \textbf{expForm} that keeps track of the prime factorization of $N$.

{\bf for} {\it epsilon} in \textbf{critE do}

\qquad  Increment the corresponding prime's exponent in \textbf{expForm}

 \qquad Multiply \textbf{sumDivN} by $\frac{p^{a+1}-1}{p^{a+1}-p}$ where $a$ is the new exponent of prime $p$

\qquad  {\bf if} $\frac{\text{sumDivN}}{\log{\log{N}}} > \frac{e^\gamma}{2}$ {\bf then}

\qquad\qquad{\bf return} n
 
\qquad {\bf end}

{\bf end}

\noindent Table 2 gives the list of the 23 odd colossally abundant numbers $< c_0$. Each violates the analogue  (\ref{RobinOdd})  of Robin's inequality in Theorem \ref{MainThm1}.

The next odd colossally abundant number is $$c_0 = 18565284664427130919514350125 = 3^4 \cdot 5^3 \cdot 7^2 \cdot 11\cdot 13\cdots 67,$$ which satisfies our analogue 
 (\ref{RobinOdd})  of Robin's inequality.

\begin{table}[ht]\label{OCATable}
\caption{Odd colossally abundant numbers $< c_0$.}
\begin{center}{\begin{tabular}{@{}rcl
@{}} \hline
$N$ && Factorization of $N$ \\
3 && 3 \\ 
 15 && $3 \cdot 5$ \\
 45 && $3^2 \cdot 5$ \\
 315 && $3^2 \cdot 5 \cdot 7$\\
 3465 && $3^2 \cdot 5 \cdot 7 \cdot 11$\\
 45045 && $3^2 \cdot 5 \cdot 7 \cdot 11 \cdot 13$\\
 135135 && $3^3 \cdot 5 \cdot 7 \cdot 11 \cdot 13$\\
 675675 && $3^3 \cdot 5^2 \cdot 7 \cdot 11 \cdot 13$\\
 11486475 && $3^3 \cdot 5^2 \cdot 7 \cdot 11 \cdot 13 \cdot 17$\\
 218243025 && $3^3 \cdot 5^2 \cdot 7 \dots 19$\\
 5019589575 && $3^3 \cdot 5^2 \cdot 7 \dots 23$\\
 145568097675 && $3^3 \cdot 5^2 \cdot 7 \dots 29$\\
 4512611027975 && $3^3 \cdot 5^2 \cdot 7 \dots 31$\\
 31588277195475 && $3^3 \cdot 5^2 \cdot 7^2 \cdot 11 \dots 31$\\
 94764831586425 && $3^4 \cdot 5^2 \cdot 7^2 \cdot 11 \dots 31$\\
 3506298768697725 && $3^4 \cdot 5^2 \cdot 7^2 \cdot 11 \dots 37$\\
 143758249516606725 && $3^4 \cdot 5^2 \cdot 7^2 \cdot 11 \dots 41$\\
 6181604729214089175 && $3^4 \cdot 5^2 \cdot 7^2 \cdot 11 \dots 43$\\
 290535422273062191225 && $3^4 \cdot 5^2 \cdot 7^2 \cdot 11 \dots 47$\\
 15398377380472296134925 && $3^4 \cdot 5^2 \cdot 7^2 \cdot 11 \dots 53$\\
 908504265447865471960575 && $3^4 \cdot 5^2 \cdot 7^2 \cdot 11 \dots 59$\\
 4542521327239327359802875 && $3^4 \cdot 5^3 \cdot 7^2 \cdot 11 \dots 59$\\
 \phantom{aaa}277093800961598968947975375 && $3^4 \cdot 5^3 \cdot 7^2 \cdot 11 \dots 61$\phantom{aaa}\\
 \hline
\end{tabular}}
\end{center}
\end{table}


\begin{thebibliography}{widestitem}

\bibitem{1}L. Alaoglu and P. Erd\H{o}s,  On highly composite and similar numbers, {\it Trans. Amer. Math. Soc.} {\bf 56} (1944) 448--469.

\bibitem{2}W. D. Banks, D. Hart, P. Moree, and C. W. Nevans, The Nicolas and Robin inequalities with sums of two squares, {\it Monatsh. Math.} {\bf 157}, (2009), no.4, 303-322.

\bibitem{3}K.  Briggs, Abundant numbers and the Riemann Hypothesis, {\it Experimental Mathematics} {\bf 15} (2) (2006) 251--256.
   
\bibitem{4}K. Broughan, {\it Equivalents of the Riemann Hypothesis: Volume 1, Arithmetic Equivalents} (Cambridge Univ. Press, 2017).

\bibitem{5}K. Broughan, T. Trudgian, Robin's inequality for 11-free integers. {\it Integers} {\bf 15} (2015), Paper No. A12.

\bibitem{6}Y-J. Choie, N. Lichiardopol, P. Moree, and P. Sol\'{e},  On Robin's criterion for the Riemann Hypothesis,
{\it J. Th\'{e}or. Nombres Bordeaux} {\bf 19} (2) (2007) 357--372.
   
\bibitem{7}P. Erd\H{o}s and J-L. Nicolas, Repartitions des nombres superabondants, {\it Bull. Soc. Math. Fr.} {\bf 103} (1975) 65--90.
    
\bibitem{8}S. Ettahri, O. Ramar\'e, L. Surel,   Fast multi-precision computation of some Euler products, arXiv:1908.06808  (19 Sep 2019).

\bibitem{9}I. S. Eum and J. K. Koo, The Riemann hypothesis and an upper bound of the divisor function for odd integers,' {\it J. Math. Anal. Appl.}
 {\bf 421} (2015) 917--924.  

\bibitem{10}J. C. Lagarias,  An elementary problem equivalent to the Riemann Hypothesis, {\it Amer. Math. Monthly} {\bf 109} (6) (2002) 534--543.
 
\bibitem{11}P. Moree and J. Cazaran, On a claim of Ramanujan in his first letter to Hardy, {\it Expositiones Math.} {\bf 17} (1999), 289-312.

\bibitem{12} T. Morrill  and D. Platt,   Robin’s inequality for 20-free integers, arXiv:1809.10813 (20 July 2020).

\bibitem{13}G. Robin, Grandes valeurs de la fonction somme des diviseurs et Hypoth\`{e}se de Riemann,'' {\it J. Math. pures et appl.} {\bf 63} (1984) 187--213.
   
\bibitem{14}G. Robin, Grandes valeurs de la fonction somme des diviseurs dans les progressions arithm\'{e}tiques, {\it J. Math. pures et appl.} {\bf 66} (1987) 337--349.
    
\bibitem{15}J. B. Rosser and L. Schoenfeld,  Approximate formulas for some functions of prime numbers, {\it Illinois J. Math} {\bf 6} (1962), 64--94.
  
\bibitem{16}J. Barkley Rosser and L. Schoenfeld,  Sharper bounds for the Chebyshev functions $\theta(x)$ and $\psi(x)$,  {\it Math. Comp.} {\bf 29} (1975), 243--269.
  
\bibitem{17}L. Schoenfeld, Sharper bounds for the Chebyshev functions $\theta(x)$ and $\psi(x)$ II, {\it Math. Comp.}  {\bf 30} (1976), 337--360.

\bibitem{18}P. Sole, Y. Zhu, An asymptotic Robin inequality, {\it Integers} 1{\bf 6} (2016), Paper No. A81.


\end{thebibliography}
\end{document}